 \tikzset{help lines/.style={step=#1cm,very thin, color=gray},
help lines/.default=.5} 
\tikzset{thick grid/.style={step=#1cm,thick, color=gray},
thick grid/.default=1} 
\newtheorem{thm}{Theorem}[section]
\newtheorem{lem}[thm]{Lemma}
\newtheorem{cor}[thm]{Corollary}
\newtheorem{prop}[thm]{Proposition}
\theoremstyle{definition}
\newtheorem{defn}[thm]{Definition}
\newtheorem{eg}[thm]{Example}
\theoremstyle{remark}
\newtheorem{rem}[thm]{Remark}
\numberwithin{equation}{section}
\newcommand{\mat}[1]{\ensuremath{
\left[\begin{matrix}#1
\end{matrix}\right]
}}
\newcommand{\vs}[1]{\vskip .#1 cm} 
\newcommand{\xrarrow}{\xrightarrow} 
\newcommand{\xlarrow}{\xleftarrow}
\newcommand{\ot}{\leftarrow}
\newcommand{\into}{\hookrightarrow}
 \newcommand{\onto}{\twoheadrightarrow}
\newcommand{\smallcoprod}{\,{\textstyle{\coprod}}\,}
\def\<{\left<}
\def\>{\right>}
\DeclareMathOperator{\Hom}{Hom}%
\DeclareMathOperator{\Ext}{Ext}%
\DeclareMathOperator{\Ind}{Ind}%
\DeclareMathOperator{\Aut}{Aut}
\newcommand{\field}[1]{\mathbb{#1}}
\newcommand{\ZZ}{\ensuremath{{\field{Z}}}}
\newcommand{\RR}{\ensuremath{{\field{R}}}}
\newcommand{\commentout}[1]{}
\newcommand{\cA}{\ensuremath{{\mathcal{A}}}}
\newcommand{\cB}{\ensuremath{{\mathcal{B}}}}
\newcommand{\cC}{\ensuremath{{\mathcal{C}}}}
\newcommand{\cD}{\ensuremath{{\mathcal{D}}}}
\newcommand{\cG}{\ensuremath{{\mathcal{G}}}}
\newcommand{\cH}{\ensuremath{{\mathcal{H}}}}
\newcommand{\cI}{\ensuremath{{\mathcal{I}}}}
\newcommand{\cK}{\ensuremath{{\mathcal{K}}}}
\newcommand{\cN}{\ensuremath{{\mathcal{N}}}}
\newcommand{\cP}{\ensuremath{{\mathcal{P}}}}
\newcommand{\cQ}{\ensuremath{{\mathcal{Q}}}}
\newcommand{\cR}{\ensuremath{{\mathcal{R}}}}
\newcommand{\cS}{\ensuremath{{\mathcal{S}}}}
\newcommand{\cU}{\ensuremath{{\mathcal{U}}}}
\newcommand{\cW}{\ensuremath{{\mathcal{W}}}}
\newcommand\vare{\varepsilon}
\title{A category of noncrossing partitions}
\author{Kiyoshi Igusa}
\address{Department of Mathematics, Brandeis University, Waltham, MA 02454}\email{igusa@brandeis.edu}
\keywords{binary trees, cubical categories, CAT(0)-spaces, cluster morphism category}
\subjclass[2010]{
16G20; 20F55}
\begin{document}

\begin{abstract} In \cite{IOTW4}, we introduced ``picture groups'' and computed the cohomology of the picture group of type $A_n$. This is the same group what was introduced by Loday \cite{Loday} where he called it the ``Stasheff group''. In this paper, we give an elementary combinatorial interpretation of the {\color{blue}``cluster morphism category'' constructed in \cite{IT13} in the special case of the linearly oriented quiver of type $A_n$.} We prove that the classifying space of this category is locally $CAT(0)$ and thus a $K(\pi,1)$. We prove a more general statement that classifying spaces of certain ``cubical categories'' are locally $CAT(0)$. The objects of our category are the classical noncrossing partitions introduced by Kreweras \cite{Kreweras}. The morphisms are binary forests. This paper is independent of \cite{IT13} and \cite{IOTW4} except in the last section where we use \cite{IT13} to compare our category with the category with the same name given by Hubery and Krause \cite{HK}.
\end{abstract}

\maketitle

\tableofcontents

\section*{Introduction}
The concept of ``pictures'' was introduced by the author in his PhD thesis \cite{Igthesis} to define the algebraic K-theory invariant for {\color{blue}the fundamental group} $\pi_1$ of the space of pseudoisotopies of a smooth manifold. They are combinatorially equivalent to well-known objects in topology called ``spherical diagrams'' which are also called ``diagrams of relations'' or ``Peiffer diagrams''. See \cite{Peiffer}, \cite{GeHo86}, \cite{Stallings87} for some of the earlier works on these diagrams. Later, in \cite{IOr}, pictures were used to prove the $k$-slice conjecture for Milnor's $\overline\mu$ link invariants. In \cite{IOTW2b}, the sequel of \cite{ITW}, propictures are introduced. Canonical propictures associated to quivers of type $\tilde{A_n}$ are constructed and shown to be related to periodic trees and cluster tilting objects {\color{blue}for the quiver $\tilde A_n$}. 

In \cite{IOTW4}, picture groups are introduced. These are the universal groups associated to the canonical semi-invariant picture of any modulated quiver of finite type. The special case of $A_n$ is studied. An explicit model for the classifying space of the associated picture group $G(A_n)$ is constructed by pasting together Stasheff associahedra. Using this model, which we call the ``picture space'', the integral cohomology of $G(A_n)$ is computed. It is shown to be free abelian in every degree with rank equal to the ``ballot numbers''. In the special case of $A_n$ with straight orientation $1\ot 2\ot \cdots \ot n$, the picture group was first studied by Loday \cite{Loday} who called it the ``Stasheff group''.

The paper \cite{IOTW4} uses the fact that the picture space $X(A_n)$ is a {\color{blue}model for the classifying space} $K(\pi,1)$ for the picture groups $G(A_n)$. This is proved in general in \cite{IT13}. The purpose of the present paper is to given an elementary proof of this basic fact in the special case of $A_n$ with straight orientation, the case considered in \cite{Loday}. The proof is based on the following theorem of Gromov \cite{Gromov87}.

\begin{thm}[Gromov]\label{thm of Gromov}
A simply connected cubical space is $CAT(0)$ if and only if the link of every vertex is a flag complex.
\end{thm}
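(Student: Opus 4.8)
This is a classical statement, and the plan is to derive it from two standard ingredients: the Cartan--Hadamard theorem for length spaces, which turns the global $CAT(0)$ property into a local curvature condition, together with a purely combinatorial criterion for which all-right spherical simplicial complexes are $CAT(1)$.

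First I would observe that a cubical space $X$ with only finitely many isometry types of cells is a complete geodesic space, so by Cartan--Hadamard a complete, simply connected length space is globally $CAT(0)$ as soon as it is locally $CAT(0)$; hence it suffices to prove that $X$ is nonpositively curved if and only if every vertex link is a flag complex. Next, nonpositive curvature is a condition on small metric balls, and a small ball about a point $p\in X$ is isometric to a ball about the apex of the Euclidean cone $C\big(\mathrm{Lk}(p)\big)$, where $\mathrm{Lk}(p)$ carries its induced \emph{all-right spherical} metric (each simplex is a spherical simplex all of whose edges have length $\pi/2$). By Berestovskii's theorem $C(Y)$ is $CAT(0)$ exactly when $Y$ is $CAT(1)$, and since the link of an interior point of a positive-dimensional cube is a spherical join of a round sphere with a lower-dimensional link while $S^{k}*Y$ is $CAT(1)$ iff $Y$ is, it is enough to test the condition at the vertices. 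This reduces the theorem to the key lemma: an all-right spherical simplicial complex $L$ is $CAT(1)$ if and only if it is flag.

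For the ``only if'' direction, suppose $L$ has a minimal missing simplex. Since the link of a vertex in a flag complex is again flag, and the link of a vertex in a $CAT(1)$ complex is again $CAT(1)$, one may pass to an iterated vertex link and assume the missing simplex is a triangle: three vertices pairwise joined by edges but spanning no $2$-simplex. At each of these vertices the two incident edges then make angle $\pi$ (the opposite pair of edges is non-adjacent in the link), so the three edges form a closed local geodesic of length $3\pi/2<2\pi$; a complete $CAT(1)$ space admits no closed local geodesic shorter than $2\pi$, so $L$ is not $CAT(1)$. For the ``if'' direction I would induct on $\dim L$. The link of every vertex of a flag all-right spherical complex is again flag, all-right spherical, and of strictly smaller dimension, hence $CAT(1)$ by the inductive hypothesis; thus $L$ is locally $CAT(1)$, and by the $CAT(1)$ analogue of Cartan--Hadamard it is globally $CAT(1)$ provided it contains no closed geodesic of length $<2\pi$. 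Ruling these out is done by a direct analysis of a hypothetical short closed geodesic, which decomposes into arcs of great circles lying in successive simplices and meeting at points of the intervening links; the all-right and flag hypotheses force each turning angle to be at least $\pi$ and the lengths of the arcs to sum to at least $2\pi$, a contradiction.

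The hard part is exactly this last estimate --- deriving the $2\pi$ lower bound on closed geodesics from the flag condition --- which is in essence Gromov's original combinatorial argument; careful modern treatments appear in Bridson's doctoral thesis and in the book of Bridson and Haefliger (see in particular Part~II, Chapter~5). For the cubical statement exactly as phrased no translation of ``flag'' is required: the vertices of $\mathrm{Lk}(p)$ are the edges of $X$ at $p$, a finite set of them spans a simplex precisely when those edges are the edges of a single cube at $p$, and the flag condition says exactly that every pairwise-compatible family of edges at $p$ is spanned by a cube --- that $X$ has no ``empty cube'' at any vertex.
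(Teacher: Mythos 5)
Your sketch is sound, but note that the paper does not prove this statement at all: it is quoted as Gromov's theorem from \cite{Gromov87} and used as a black box, so there is no internal proof to compare against. What you have written is the standard modern argument (essentially Bridson--Haefliger): reduce global $CAT(0)$ to local nonpositive curvature via Cartan--Hadamard (using completeness, which for cube complexes needs finite dimensionality or finitely many isometry types of cells --- satisfied here since $B\cN\cP(n)$ is $(n-1)$-dimensional), convert the local condition into a condition on geometric links via Berestovskii's cone criterion, and then invoke Gromov's lemma that an all-right spherical complex is $CAT(1)$ if and only if it is flag. Your ``only if'' direction (minimal empty simplex, pass to iterated links, produce a locally geodesic loop of length $3\pi/2<2\pi$) is correct, and your reduction to vertices is fine provided you add the observation that the link of a positive-dimensional cube is an iterated vertex link, so flagness (resp.\ the $CAT(1)$ property) of vertex links passes to it. The one place where your write-up is a genuine sketch rather than a proof is the ``if'' direction of Gromov's lemma, the $2\pi$ lower bound on closed geodesics in flag all-right complexes; you correctly identify this as the hard step and defer it to Bridson's thesis and Bridson--Haefliger II.5, which is reasonable for a classical cited result but means the argument is not self-contained --- exactly the same position the paper itself takes by citing Gromov.
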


Since $CAT(0)$ spaces are contractible, we obtain the following conclusion.

\begin{cor}
If a connected locally cubical space has the property that the link of every vertex is a flag complex then it is a $K(\pi,1)$.
\end{cor}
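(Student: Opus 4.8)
The plan is to pass to the universal cover and apply Gromov's theorem there. Let $X$ be a connected cubical space whose vertex links are all flag complexes, and let $p\colon \tilde X \to X$ be its universal cover. First I would check that $\tilde X$ inherits a cubical structure: since each cube is simply connected, the characteristic map of every cube of $X$ lifts along $p$ (uniquely, once a lift of one interior point is chosen), and these lifts assemble $\tilde X$ into a cubical complex for which $p$ is a cubical covering map. In particular $p$ restricts to a homeomorphism from a neighborhood of each vertex $\tilde v$ of $\tilde X$ onto a neighborhood of $p(\tilde v)$, so the link of $\tilde v$ in $\tilde X$ is isomorphic, as a simplicial complex, to the link of $p(\tilde v)$ in $X$, and hence is a flag complex.

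Next, $\tilde X$ is simply connected by construction, so Gromov's theorem applies to $\tilde X$ with its natural piecewise-Euclidean metric and shows that $\tilde X$ is $CAT(0)$. Since $CAT(0)$ spaces are contractible (as noted just before the corollary), $\tilde X$ is contractible. Finally, because $p\colon \tilde X \to X$ is a covering map it induces isomorphisms $\pi_n(X) \cong \pi_n(\tilde X)$ for all $n \geq 2$, and these groups vanish as $\tilde X$ is contractible. Therefore $X$ is aspherical, i.e. $X$ is a $K(\pi,1)$ with $\pi = \pi_1(X)$.

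The only real subtlety, and the step I would spend the most care on, is the first one: making precise that ``cubical space'' carries exactly the combinatorial data needed so that passing to the universal cover preserves the flag-link condition and so that the piecewise-Euclidean metric on $\tilde X$ is well defined and complete, which is what Gromov's theorem requires. Once the cubical structure and links are shown to lift cleanly, the rest of the argument is formal.
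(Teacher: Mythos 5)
Your argument is correct and is exactly the (implicit) argument the paper intends: lift the cubical structure and the flag-link condition to the universal cover, apply Gromov's theorem there to get a $CAT(0)$, hence contractible, cover, and conclude asphericity of the base via the covering map. Nothing essentially different from the paper's route.
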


A \emph{cubical space} is space which is a union of cubes $I^n$ for $I=[0,1]$ so that the intersection of any two cubes is a common face. A space is \emph{locally cubical} if its universal covering space is cubical. A \emph{flag complex} is a simplicial complex with the property that every set of $n+1$ vertices which are pairwise connected by edges spans an $n$-simplex. {\color{blue}For example, any finite set of vertices with no edges is a flag complex. This implies that any graph is locally $CAT(0)$.} It is clear that the link of every vertex of a locally cubical complex is a simplicial complex. But it is not always a flag complex.

Noncrossing partitions were introduced by Kreweras \cite{Kreweras} and later generalized in \cite{BW}, \cite{Bessis}. The relationship to representation theory originates in \cite{Ingalls-Thomas}. In \cite{HK} generalized noncrossing partitions become objects of a category. In this paper we used the classical noncrossing partitions of a single ordered set. This is a simplification of a special case of a construction from \cite{IT13}. It is self-contained except for the last section. {\color{blue}Readers should also see Chapoton and Nadeau's short combinatorial reinterpretation of our category and several enumeration results with mysterious symmetries which we don't know how to explain \cite{CN}.}

The contents of this paper are as follows. In Section 1 we define noncrossing partitions on a finite totally ordered set $V$ with $n$ elements. These are the objects of the category $\cN\cP(n)$. We call two parts of a partition \emph{adjacent} if merging those parts produces another noncrossing partition. We consider trees having parts of a partition as vertices and edges connecting certain adjacent parts. 

{\color{blue}There exists at least one morphism $\cS\to \cR$ if each part of $\cR$ is a union of parts of $\cS$. Each morphism is given by a collection of binary trees, one for each part of $\cR$ which prescribes how its constituent parts in $\cS$ are assembled. Thus, a morphism $\cS\to \cR$ is a binary forest, with one component for each part of $\cR$.}

Section 2 is devoted to proving that composition of morphisms as given in Section 1 is well-defined and associative. In the language of Section 3, this depends on ``backward links'' of objects being flag complexes.

In Section 3 we define \emph{cubical categories}. These describe general properties of the category $\cN\cP(n)$ which insure that its classifying space $B\cN\cP(n)$ is (locally) cubical. The local properties of the category $\cN\cP(n)$, as proven in detail in Section 2, imply that the universal covering of $B\cN\cP(n)$ is $CAT(0)$ and therefore contractible.

In Section 4 we compute the fundamental group of $B\cN\cP(n)$ and show that it is equal to the picture group  defined in \cite{IOTW4}, namely the picture group for $A_{n-1}$ with straight orientation. This group was first considered by Loday \cite{Loday} who called it the ``Stasheff group'' since its $K(\pi,1)$ is a quotient space of a Stasheff associahedron. However, \cite{IOTW4} has been revised and the results extended to picture groups of $A_n$ with any orientation. In \cite{IT22}, we use \cite{ST} to extend this result to all other Dynkin quivers and some extended Dynkin diagrams. 

In Section 5, {\color{blue}we recall the definition of the cluster category of any finite dimension hereditary algebra. We also recall the definition of the cluster morphism category of an hereditary algebra. We use this to} compare the category of noncrossing partitions in this paper with the category of noncrossing partitions given by Hubery and Krause in \cite{HK} which we will call $\cH\cK$. Basically the statement is that there is a category {\color{blue}whose objects are all} cluster categories and a contravariant functor from that category to $\cH\cK$. The root system of the quiver $A_{n-1}$ with straight orientation gives an object in $\cH\cK$. Our category $\cN\cP(n)$ is equivalent to a full subcategory of the comma category over this single object of $\cH\cK$. 

\section{Noncrossing partitions and binary forests}

{\color{blue}
Before starting the technical discussion, we explain the key property of a noncrossing partition and the morphisms between them. This is \emph{pairwise compatibility}. A partition of any linearly ordered set is called ``noncrossing'' if it is pairwise noncrossing, i.e., no two parts ``cross''. See Figure \ref{fig: nonX is pairwise condition}. We think of creating a noncrossing partition as a dynamic process. If we merge $1$ and $5$ to form $A$ we cannot then merge $3$ with $6$. Referring to the right side of Figure \ref{fig: nonX is pairwise condition}, we consider which pairs of parts can be merged. This is a pairwise condition: any collection of pairs of parts can be merged at the same time provided that no \emph{two} of these pairs produces a crossing partition. These are the objects and morphisms of a category of noncrossing partitions. Both objects and morphisms are defined by pairwise compatibility conditions. We will see that morphisms can be encoded with binary trees and these also satisfy the key property: edges form a binary tree if they are pairwise compatible and maximal with this property.

The ``pairwise compatibility'' condition also describes ``partial clusters'' in a cluster category. These are collections of objects in the category so that no two extend each other. (See \cite{BMRRT} for details.) When the collection is maximal, it is called a ``cluster''. Representations of a quiver of type $A_{n}$: $1\ot  2\ot \cdots\ot n$ will form a cluster category if we add shifted projective objects $P_i[1]$. The ``cluster morphism category'' for a quiver of type $A_{n}$ (See \cite{IT13}) has partial cluster as objects and extensions of these partial clusters to larger partial clusters. The category of noncrossing partitions on $n+1$ parts exactly mirrors this structure. In fact the two categories are isomorphic. Thus, the category of noncrossing partitions on $n+1$ parts is an elementary combinatorial construction for the more abstract cluster morphism category on the cluster category of type $A_{n}$. For example the parts $A=\{0,4\}$ and $B=\{2,6\}$ correspond to the modules $P_4$ and $I_3$. The parts $A$,$B$ cross since the modules $P_4,I_3$ extend each other. The nontrivial extension is $P_4\to P_6\oplus M\to I_3$ where $M=P_4/P_2$ is the image of $P_4\to I_3$.
}

{
\begin{figure}[htbp]
\begin{center}
\begin{tikzpicture}[scale=.8]
{
	\foreach \x in {0.55,2.55,4.55}
	{\draw[thick, rounded corners=.35cm] (\x,-.35) rectangle +(.9,.75);
	} 
	\draw 
	(0,0) node{0}
	(0,.9) node{$A$}
	(1,0) node{1}
	(2,0) node{2}
	(2,1.8) node{$B$}
	(3,0) node{3}
	(4,0) node{4}
	(5,0) node{5}
	(6,0) node{6};

%
\begin{scope}[xshift=-1cm]
	\draw[red,thick,rounded corners=.35cm]  (2.6,2.5)--(2.6,-.4)--(3.4,-.4)--(3.4,1.8)--(6.6,1.8)--(6.6,-.4)--(7.4,-.4)--(7.4,2.5)--cycle;
\end{scope}
\begin{scope}[xshift=-1cm]
	\draw[blue,thick,rounded corners=.35cm]  (.6,1.2)--(.6,-.4)--(1.4,-.4)--(1.4,.7)--(4.6,.7)--(4.6,-.4)--(5.4,-.4)--(5.4,1.2)--cycle;
\end{scope}
}
{
\begin{scope}[xshift=8cm]
	\foreach \x in {0.55,1.55,2.55,4.55,5.55}
	{\draw[thick, rounded corners=.35cm] (\x,-.35) rectangle +(.9,.75);
	} 
	\draw 
	(0,0) node{0}
	(0,.9) node{$A$}
	(1,0) node{1}
	(2,0) node{2}
	(3,0) node{3}
	(4,0) node{4}
	(5,0) node{5}
	(6,0) node{6};
%
%
\begin{scope}[xshift=-1cm]
	\draw[blue,thick,rounded corners=.35cm]  (.6,1.2)--(.6,-.4)--(1.4,-.4)--(1.4,.7)--(4.6,.7)--(4.6,-.4)--(5.4,-.4)--(5.4,1.2)--cycle;
\end{scope}
\end{scope}
}

\end{tikzpicture}
\color{blue}
\caption{On the left, two parts cross: $A=\{0,4\}$ and $B=\{2,6\}$ cross since $0<2<4<6$ (equivalently, the closed intervals $[0,4],[2,6]$ overlap). If we take one of these, say $A$, on the right, we cannot merge $1,2$ or $3$ with $5$ or $6$. We can merge several pair on the right as long as these pairs are not crossing. For example we cannot merge $2$ with $A$ and $1$ with $3$. This pair of operations ``cross''.}
\label{fig: nonX is pairwise condition}
\end{center}
\end{figure}
}
The category of noncrossing partitions will be defined in several steps. In this section we define the objects and morphisms of the category. In the next section we {\color{blue}define a composition of morphisms and derive a formula for this composition law in order to prove that composition is associative.} The objects of our category will be noncrossing partitions of a finite totally ordered set. Its morphisms will be given by ``binary forests.''

\subsection{Noncrossing partitions}

A \emph{partition} of any set $V$ is a set $\cS$ whose elements are disjoint subsets $S_i\subseteq V$ with the property that $V=\coprod S_i$. The elements $S_i\in\cS$ are called the \emph{parts} of the partition $\cS$. Assuming that $V$ is finite, the difference between the cardinality of $V$ and that of $\cS$ will be called the \emph{rank} of the partition. 
\[
	rk\,\cS:=|V|-|\cS|
\]
For example, there is a unique partition of $V$ of rank 0 given by partitioning $V$ into singletons. We say that a partition $\cR$ is a \emph{refinement} of $\cS$ if every part of $\cR$ lies in a part of $\cS$. We say that $\cS$ is obtained from $\cR$ by \emph{merging} parts of $\cR$ together. It is clear that, if $\cR$ is a refinement of $\cS$ and the difference between their ranks is $k$ then $\cR$ can be obtained from $\cS$ in $k$ steps where, in each step, two parts of the partition are merged.

When $V$ is totally ordered, a partition of $V$ is called \emph{noncrossing} if there do not exist $a<b<c<d$ so that $a,c$ lie in one part and $b,d$ lie in another. This can also be described as follows. Assume $V$ is a finite subset of $\RR$. {\color{blue}We usually take $V=\{1,2,3,\cdots,n\}$.} Then the \emph{support} of any part $A\subseteq V$ is the closed interval ${supp\,A:=[a,a']}\subset\RR$ where $a,a'$ are the minimum and maximum elements of $A$, respectively. A partition $\cS$ of $V$ is noncrossing if, for any two parts $A,B\in\cS$, one of the two sets is disjoint from the support of the other. {\color{blue}For example, the subsets $A=\{2,4\}$ and $B=\{1,5,6\}$ of $V$ are noncrossing since the support of $A$ is the closed interval $[2,4]$ which is disjoint from the finite set $B$.}

An important easy observation is the following.

\begin{prop}\label{characterization of refinement}
Given any noncrossing partition $\cS$ of $V$, another noncrossing partition of $V$ can be given by taking the union of arbitrary noncrossing partitions of every part of $\cS$.
\end{prop}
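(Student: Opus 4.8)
The plan is to check the two defining properties of a noncrossing partition for the candidate $\cR := \bigcup_i \cR_i$, where $\cR_i$ denotes the chosen noncrossing partition of the part $S_i \in \cS$ (each $S_i$ carrying the total order induced from $V$). That $\cR$ is a partition of $V$ is immediate: the parts of $\cS$ are pairwise disjoint and cover $V$, hence the parts appearing in the various $\cR_i$ are pairwise disjoint, none is empty, and their union is $\coprod_i S_i = V$.

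It remains to verify that $\cR$ is noncrossing. First I would record the elementary monotonicity of support: if $A \subseteq B$ are subsets of $V$ then $supp\,A \subseteq supp\,B$, since the minimum and maximum of $A$ lie between those of $B$. Now fix two distinct parts $A, B \in \cR$ and split into two cases. If $A$ and $B$ both belong to the same $\cR_i$, then one of them is disjoint from the support of the other because $\cR_i$ is noncrossing. If instead $A \in \cR_i$ and $B \in \cR_j$ with $i \neq j$, then $A \subseteq S_i$ and $B \subseteq S_j$ with $S_i \neq S_j$; since $\cS$ is noncrossing, one of $S_i, S_j$ --- say $S_j$ --- is disjoint from $supp\,S_i$, and then $B \subseteq S_j$ is disjoint from $supp\,A \subseteq supp\,S_i$. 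In either case the noncrossing condition holds for the pair $A, B$, so $\cR$ is a noncrossing partition of $V$.

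There is no real obstacle here; the only things to watch are the bookkeeping of the two cases and the use of the support inclusion in the across-parts case. Equivalently, one could argue contrapositively using the $a<b<c<d$ form of the definition: a crossing pair of parts $A, B$ of $\cR$, say with $a,c \in A$ and $b,d \in B$, either lies inside a single part $S_i$ of $\cS$, contradicting that $\cR_i$ is noncrossing, or else has $a,c \in S_i$ and $b,d \in S_j$ with $i \neq j$, contradicting that $\cS$ is noncrossing. I would include whichever of the two phrasings reads more smoothly at this point in the paper.
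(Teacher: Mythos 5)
Your proposal is correct and follows essentially the same route as the paper: the paper's one-line proof is exactly your cross-parts case (a part of $S_i$ has support inside $supp\,S_i$, which misses $S_j$ and hence every part of it), with the same-part case left implicit since those parts form a noncrossing partition of $S_i$ by hypothesis. Your write-up just makes the case split and the support-monotonicity step explicit.
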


\begin{proof}
Given any two parts of $\cS$ the support of one, say $A$, is disjoint from the other, say $B$. Then the support of any part of $A$ is disjoint from any part of $B$.
\end{proof}

This observation implies that it is very easy to determine how one part of a noncrossing partition can be split into two parts. However, the conditions for the converse operation are not immediate.

\begin{defn} {\color{blue} Two parts of a noncrossing partition $\cS$ of a totally ordered set $V$ will be called \emph{adjacent} if, when we replace them with their union, the resulting partition of $V$ is still noncrossing.}
\end{defn}

{
\begin{figure}[htbp]
\begin{center}
\begin{tikzpicture}
	\foreach \x in {1.55,3.55,5.55}
	{\draw[thick, rounded corners=.35cm] (\x,-.35) rectangle +(.9,.75);
	} 
	\draw (0,0) node{1}
	(1,0) node{2}
	(2,0) node{$C$:3}
	(3,0) node{4}
	(4,0) node{$D$:5}
	(5,0) node{6}
	(6,0) node{$E$:7}
	(7,0) node{8};
	\draw[thick,rounded corners=.35cm]  (-.4,2)--(-.4,-.4)--(.4,-.4)--(.4,1.5)--(4.6,1.5)--(4.6,-.4)--(5.4,-.4)--(5.4,1.5)--(6.6,1.5)--(6.6,-.4)--(7.4,-.4)--(7.4,2)--cycle;
	\draw (0,1.5) node{$A$};
	\draw (1,.8) node{$B$};
	\draw[thick,rounded corners=.35cm]  (.6,1.2)--(.6,-.4)--(1.4,-.4)--(1.4,.7)--(2.6,.7)--(2.6,-.4)--(3.4,-.4)--(3.4,1.2)--cycle;
\end{tikzpicture}
\color{blue}
\caption{Example of noncrossing partition of $V=\{1,2,3,4,5,6,7,8\}$ into five parts $A=\{1,6,8\},B=\{2,4\},C=\{3\}, D=\{5\}, E=\{7\}$.}
\label{Fig: first example of noncrossing partition}
\end{center}
\end{figure}
}

\begin{eg}\label{eg: 1.3}
Take the partition of $V=\{1,2,3,4,5,6,7,8\}$ into 5 parts as shown in Figure \ref{Fig: first example of noncrossing partition}.  This is a noncrossing partition.

Of the $\binom52=10$ pairs of parts, five are adjacent and five are not:
\begin{enumerate}
\item $E$ is not adjacent to $B,C$ or $D$. The reason is that the union of $E$ with $B,C$ or $D$ would contain $6$ in its support and would thus cross $A$.
\item $C,D$ are not adjacent since $C\cup D$ crosses $B$.
\item $A,C$ are not adjacent since $A\cup C$ crosses $B$.
\item $B,D$ are adjacent.
\item The other four pairs are adjacent and ordered: $(B,A)$, $(D,A)$, $(C,B)$, $(E,A)$. In each of these pairs, the support of the first part is contained in the support of the second.
\end{enumerate}
\end{eg}

The parts of a noncrossing partition have two partially orderings: \color{blue}``vertical ordering'' defined below and ``lateral ordering'' given in Definition \ref{def: lateral ordering}.

\begin{defn}\label{def: vertical ordering}
We define the \emph{vertical ordering} of parts of a noncrossing partition by inclusion of support. Thus $X<Y$ in the vertical ordering if the support of $X$ is a subset of the support of $Y$.
\end{defn}
\color{black}

In the example we have $C<B<A$, $D<A$ and $E<A$ in the vertical ordering. The Hasse diagram of the example is thus given by:
\[
\xymatrixrowsep{10pt}\xymatrixcolsep{10pt}
\xymatrix{
&A\\
B\ar@{-}[ur]& D\ar@{-}[u]& E\ar@{-}[ul]\\
C\ar@{-}[u]
	}
\]
We will say that the part $A$ of a noncrossing partition \emph{covers} part $B$ if $A$ is directly above $B$ in this Hasse diagram. In other words, $supp\,B\subset supp\,A$ and there is no part $C$ so that $supp\,B\subset supp\,C\subset supp\,A$. 

\begin{lem}
If two parts $A,B$ of a noncrossing partition are adjacent in the Hasse diagram of the partition then they are adjacent. Partially conversely, if there is another part $C$ so that $supp\,B\subset supp\,C\subset supp\,A$ then $A,B$ are not adjacent.
\end{lem}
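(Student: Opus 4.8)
The plan is to place $V$ inside $\RR$, so that each part $X$ carries a support $supp\,X=[\min X,\max X]$, and to begin by distilling one consequence of the noncrossing hypothesis. For distinct parts $X,Y$ the intervals $supp\,X$ and $supp\,Y$ are either disjoint or properly nested --- never equal, since distinct parts have distinct minima --- and whenever $supp\,X\subseteq supp\,Y$ one has $X\subseteq supp\,Y$, so the noncrossing condition applied to the pair $\{X,Y\}$ forces $Y\cap supp\,X=\emptyset$. I will also use the trivial remark that $supp(A\cup B)=supp\,A$ whenever $supp\,B\subseteq supp\,A$.

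For the first statement I may assume $A$ covers $B$, i.e.\ $supp\,B\subsetneq supp\,A$ and no part has support strictly between $supp\,B$ and $supp\,A$. Merging $A$ and $B$ produces the part $A\cup B$, of support $supp\,A$, and leaves every other part $D$ alone; since the parts different from $A,B$ did not cross one another, it suffices to show that $A\cup B$ does not cross any such $D$, that is (by the interval description of noncrossing) that $D\cap supp\,A=\emptyset$ or $(A\cup B)\cap supp\,D=\emptyset$. I would split on how $supp\,D$ sits relative to $supp\,A$ via the trichotomy above: if $supp\,D$ is disjoint from $supp\,A$, or $supp\,A\subsetneq supp\,D$, then $D\cap supp\,A=\emptyset$ at once; if $supp\,D\subsetneq supp\,A$, then $A\cap supp\,D=\emptyset$, so a crossing would require $B\cap supp\,D\neq\emptyset$, and splitting $supp\,D$ against $supp\,B$ the alternatives ``disjoint'' and ``$supp\,D\subseteq supp\,B$'' both give $B\cap supp\,D=\emptyset$, while the remaining alternative $supp\,B\subsetneq supp\,D\subsetneq supp\,A$ is exactly what the covering hypothesis forbids. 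This is the only place the hypothesis is used.

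For the partial converse, assume a part $C$ satisfies $supp\,B\subsetneq supp\,C\subsetneq supp\,A$, and write $a_1=\min A$, $b_1=\min B$, $c_1=\min C$, $c_2=\max C$. From $supp\,C\subsetneq supp\,A$ I get $a_1\le c_1$ together with $A\cap supp\,C=\emptyset$, hence $a_1<c_1$; from $supp\,B\subsetneq supp\,C$ I get $c_1\le b_1$ together with $C\cap supp\,B=\emptyset$, hence $c_1<b_1$; and $b_1\in supp\,B\subseteq supp\,C$ with $b_1\notin C$ gives $b_1<c_2$. So $a_1<c_1<b_1<c_2$ with $a_1,b_1\in A\cup B$ and $c_1,c_2\in C$, a crossing of the form $a<b<c<d$. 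Hence merging $A$ and $B$ destroys the noncrossing property and $A,B$ are not adjacent.

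The only thing that requires care is the bookkeeping in the first statement: one must check that the listed cases are genuinely exhaustive --- which is precisely the support trichotomy --- and that the sole configuration excluded by the covering hypothesis, a part $D$ with $supp\,B\subsetneq supp\,D\subsetneq supp\,A$, is exactly the configuration that the second statement identifies as an obstruction to adjacency, so the two halves of the lemma dovetail.
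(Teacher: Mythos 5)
Your proof is correct and follows essentially the same route as the paper: the same three-way case analysis on how another part's support sits relative to $supp\,A$ (disjoint, containing, contained), with the contained case split against $supp\,B$ and the excluded subcase being exactly the covering hypothesis, plus the explicit crossing $a_1<c_1<b_1<c_2$ for the partial converse, which the paper leaves as an immediate observation. No gaps.
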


\begin{proof}
Suppose that $A$ covers $B$. Then $supp(A\cup B)=supp\,A$. Given any other part $C$, there are three cases. (1) If $A,C$ have disjoint support then $A\cup B$ does not cross $C$. 
(2) If $supp\,A\subset supp\,C$ then $supp\,A$ does not meet $C$. Since $supp(A\cup B)=supp\,A$, the parts $A\cup B$ and $C$ do not cross. 
(3) If $supp\,C\subset supp\,A$ then $A$ is disjoint from $supp\,C$ and either (3a) $B,C$ have disjoint support or (3b) $supp\,C\subset supp\,B$ since the remaining case (3c) $supp\,B\subset supp\,C$ is excluded by assumption. In both subcases, $supp\,C$ is disjoint from $B$ as well as $A$. So, $supp\,C$ is disjoint from $A\cup B$ making them noncrossing.
Therefore, $A\cup B$ does not cross any other part of the partition.

For the partial converse, one sees immediately that $A\cup B$ and $C$ are crossing.
\end{proof}

\color{blue}
\begin{defn}\label{def: lateral ordering}
When two parts $A,B$ of a noncrossing partition have disjoint supports, the \emph{lateral ordering} is defined by saying \emph{$A$ is to the left of $B$} if every element of $A$ is less than every element of $B$ in the total ordering of $V$.
\end{defn}\color{black}

{\color{blue}In Example \ref{eg: 1.3}}, $A$ covers three parts ordered laterally as $B,D,E$. However, $B,D$ are adjacent but $D,E$ are not. This is because there is an element of $A$ between $D$ and $E$ in the lateral ordering. {\color{blue}Also, $A$ is not comparable in the lateral ordering to any other part.}

{\color{blue}
\begin{defn}\label{def: parallel parts}
We say that two parts $A,B$ of a noncrossing partition $\cS$ are \emph{parallel} if either
\begin{enumerate}
\item[(i)] $A,B$ are maximal {\color{blue}in the vertical ordering} or
\item[(ii)] $A,B$ are covered by the same part $C$ and no element of $C$ lies between $A$ and $B$ in the lateral ordering. Equivalently, $C$ is disjoint from the support of $A\cup B$.
\end{enumerate}
\end{defn}
}
Note that, in the second case, any other part $D$ above $A$ and $B$ must also be above $C$. So, $supp(A\cup B)\subseteq supp\,C$ is disjoint from $D$. 

We say that a set of pairwise parallel parts of $\cS$ is \emph{complete} if no parallel parts can be added to the set. A complete set of pairwise parallel parts will be called a \emph{parallel set}.

{\color{blue}In Example \ref{eg: 1.3}, $B$ and $D$ are parallel but $D,E$ are not parallel: All three are covered by $A$ but $D,E$ are separated by an element of $A$ whereas no element of $A$ lies between $B$ and $D$. Also, the support of $B\cup D$ is the closed interval $[2,5]$ which is disjoint from the set $\{1,6,8\}=A$, whereas the support of $D\cup E$ is $[5,7]$ which contains $6\in A$.}

\begin{prop}
Two parts of a noncrossing partition are adjacent if and only if the two parts are either parallel or one covers the other.
\end{prop}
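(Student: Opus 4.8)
The plan is to work throughout with the support reformulation of crossing: two parts $X,Y$ of a noncrossing partition \emph{cross} precisely when $X\cap supp\,Y\ne\emptyset$ \emph{and} $Y\cap supp\,X\ne\emptyset$. Writing $\cS'$ for the partition obtained from $\cS$ by replacing $A,B$ with $A\cup B$, one therefore verifies that $\cS'$ is noncrossing by checking, for each other part $D$, that $D$ is disjoint from $supp(A\cup B)$ or $A\cup B$ is disjoint from $supp\,D$. I will also use two elementary facts repeatedly: (i) two parts with overlapping supports have one support contained in the other (if $Y\cap supp\,X=\emptyset$ then the interval $supp\,Y$, having both endpoints outside $supp\,X$ yet meeting it, contains $supp\,X$); and (ii) consequently the parts lying strictly above a fixed part are totally ordered by inclusion of support, so a non-maximal part has a well-defined cover.

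For the ``if'' direction, if one of $A,B$ covers the other, adjacency is the first assertion of the preceding Lemma. Suppose instead $A,B$ are parallel; two parts with a common cover, and likewise two distinct maximal parts, have disjoint supports, so we may assume $A$ lies to the left of $B$ and $supp(A\cup B)=[\min A,\max B]$. If $A\cup B$ crossed some $D\ne A,B$, then, since $A$ and $B$ each fail to cross $D$ while $(A\cup B)\cap supp\,D\ne\emptyset$, we may assume $A\cap supp\,D\ne\emptyset$, forcing $D\cap supp\,A=\emptyset$. A point $d\in D$ inside $[\min A,\max B]$ then lies right of $\max A$; as $D$ omits all of $supp\,A$ while $supp\,D$ contains a point of $A$, the interval $supp\,D$ contains $supp\,A$ strictly, so $D$ is strictly above $A$. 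If $A$ is maximal this is absurd; otherwise $A,B$ have a common cover $C$ with $C\cap supp(A\cup B)=\emptyset$, and since $D$ lies above $A$ fact (ii) gives $supp\,C\subseteq supp\,D$; but then $d\in D\cap supp(A\cup B)\subseteq D\cap supp\,C$ and $C\subseteq supp\,C\subseteq supp\,D$ show that $C$ and $D$ cross (with $C\ne D$, as $C$ misses $supp(A\cup B)\ni d$) — a contradiction. The case $B\cap supp\,D\ne\emptyset$ is symmetric.

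For the ``only if'' direction, let $A,B$ be adjacent. If $supp\,B\subsetneq supp\,A$, then no part $C$ has $supp\,B\subsetneq supp\,C\subsetneq supp\,A$ — such a $C$ would obstruct adjacency by the partial converse in the preceding Lemma — so $A$ covers $B$; the case $supp\,A\subsetneq supp\,B$ is symmetric, and $supp\,A=supp\,B$ is impossible for distinct disjoint parts. So assume $supp\,A,supp\,B$ disjoint with $A$ left of $B$. First, if $A$ is maximal then so is $B$: a part $C$ above $B$ would have $supp\,C$ disjoint from $supp\,A$ — an overlap would, by (i), nest the supports, and $supp\,A\subsetneq supp\,C$ contradicts maximality of $A$ while $supp\,C\subsetneq supp\,A$ is impossible since $supp\,B\subseteq supp\,C$ lies right of $supp\,A$ — and then $\min C\in C\cap supp(A\cup B)$ together with $\min B\in(A\cup B)\cap supp\,C$ would make $A\cup B$ cross $C$, against adjacency. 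So either both parts are maximal, hence parallel, or neither is. In the latter case let $C$ be the cover of $A$. Since $\min A\in(A\cup B)\cap supp\,C$, non-crossing of $A\cup B$ with $C$ forces $C\cap[\min A,\max B]=\emptyset$, which with $supp\,A\subseteq supp\,C$ yields $\min C<\min A$ and $\max C>\max B$, hence $supp\,B\subsetneq supp\,C$. Finally $C$ covers $B$ too: any part $C''$ with $supp\,B\subsetneq supp\,C''\subsetneq supp\,C$ survives in $\cS'$ and so does not cross $A\cup B$, and the identical computation (now with $\min B\in(A\cup B)\cap supp\,C''$) gives $supp\,A\subsetneq supp\,C''\subsetneq supp\,C$, contradicting that $C$ covers $A$. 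Thus $C$ is a common cover of $A$ and $B$ disjoint from $supp(A\cup B)$, and $A,B$ are parallel.

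The only genuinely delicate step is this last case: turning the lone hypothesis that $A\cup B$ fails to cross a given part into exact information about where that part's support sits relative to $[\min A,\max B]$, and then iterating to locate the \emph{common} cover of $A$ and $B$ with nothing of it lying laterally between them. The workhorse is the recurring observation that a part which is disjoint from $supp(A\cup B)=[\min A,\max B]$ but whose support is comparable with $supp\,A$ must straddle the whole interval $[\min A,\max B]$; with that, every crossing check collapses to bookkeeping with the four endpoints $\min A,\max A,\min B,\max B$.
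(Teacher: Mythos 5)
Your proof is correct and follows essentially the same route as the paper: the vertically related case is delegated to the preceding Lemma, and the disjoint-support case is settled by producing the common cover and checking noncrossing of $A\cup B$ against the remaining parts, with the paper's terse case analysis expanded into explicit endpoint computations. The only cosmetic slip is the phrase ``or neither is'': at that point you have only shown that $A$ maximal forces $B$ maximal, but since the subsequent argument uses only that $A$ is non-maximal (and then shows that $C$ covers $B$ as well), nothing is lost.
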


\begin{proof} The lemma takes care of the case when $A,B$ are related in the vertical ordering. So, we may assume that $A,B$ are two parts with disjoint support. 

If $A,B$ are parallel then, for any other part $C$, there are three cases: (1) $C$ is above both $A$ and $B$. (2) $C$ is below one of then or (3) $C$ is unrelated to both by the vertical ordering. In Case (1), the support of $A\cup B$ does not contain any element of $C$ by definition of parallel. In the other two cases, the support of $C$ is disjoint from $A\cup B$. So, $A\cup B$ and $C$ do not cross. This shows that parallel parts are adjacent. 

Suppose conversely that $A,B$ are adjacent with disjoint supports. Then we claim that any other part $C$ which lies above one must lie above the other. Otherwise, $C$ and $A\cup B$ would cross. So, either $A,B$ are both maximal or they are covered by the same part $C$. In the second case we have that $C$ is disjoint from the support of $A\cup B$ since $A,B$ are adjacent. In either case, $A,B$ are parallel.
\end{proof}

We put together all adjacent pairs of parts into a single set $E(\cS)$ which we call the \emph{edge set} of the noncrossing partition $\cS$. Formally, we define a (directed) \emph{edge} to be a vector $A-B\in \ZZ\cS$ where $A,B$ are distinct parts and $\ZZ\cS$ is the free abelian group generated by $\cS$. We define $E(\cS)$ to be the set of all edges $A-B$ where either
\begin{enumerate}
\item $A,B$ are parallel parts of $\cS$ or
\item $A$ covers $B$.
\end{enumerate} 
{\color{blue}In case (1), $A-B$ and $B-A$ will both be elements of $E(\cS)$.}

\subsection{Binary forests} Although graphs have vertices and edges, when we say that ``$G$ is a graph on a set $S$'' we mean that $G$ is the set of edges of a graph and that $S$ is its set of vertices. We define an \emph{edge} (or \emph{edge vector}) to be an element of the free abelian group $\ZZ S$ of the form $w-v$. This is an edge from $v$ to $w$.

By a (directed) \emph{forest} on a finite set $S$ we mean any linearly independent set $F$ of edges $E_i=w_i-v_i\in\ZZ S$. A forest of maximal size is called a \emph{tree}. These notions are easily seen to be equivalent to the standard notions of directed graphs which are forests and spanning trees. 

Any forest $F$ on $S$ gives a partial ordering on $S$ by the condition that $v\le w$ if there is a directed path in the forest from $v$ to $w$ or, equivalently, $w-v$ is a sum of elements of $F$. A tree $T$ on $S$ is \emph{rooted} if $S$ has a unique maximal element $r$. This element is called the \emph{root} of the tree. The \emph{root vector} will be $\ast-r\in \ZZ S_+$ where $S_+=S\cup\{\ast\}$. The elements of $S$ are the \emph{vertices} of the tree but the basepoint $\ast$ is not a ``vertex''. We call $T_+=T\cup\{\ast-r\}\subset\ZZ S_+$ an \emph{augmented tree}. The tree $T\subset \ZZ S$ will be called a \emph{rooted tree}. Sometimes it will be convenient to include the root vector $\ast-r$ and sometimes not.

For an edge $w-v$ in a rooted tree, $w$ is called the \emph{parent} of $v$ and $v$ is called a \emph{child} of $w$. Note that each vertex of an augmented tree, including the root, will have exactly one parent.

We continue to assume that $V$ is a finite totally ordered set. We will put a second ``vertical'' partial order on $V$ which for notational convenience we write as follows. Let
\[
	V=\{v_1,\cdots,v_n\}.
\] 
The lateral ordering is given by the indices. For example, if we write $v_i>v_j$, $i<j$ we mean that $v_i$ is above and to the left of $v_j$.

\begin{defn}\label{def: binary tree}
We define a \emph{binary tree} on $V$ to be a rooted tree $T$ on $V$ so that {\color{blue}the partial ordering on $V$ given by $v_i<v_j$ if the unique path from $v_i$ to the root passes through $v_j$} has the following additional properties.
\begin{enumerate}
\item[(i)] {\color{blue}If the directed edge $v_j-v_i$ lies in $T$ then, $v_k<v_i$ for any $k$ between $i$ and $j$.}
\item[(ii)] Every $v_i$ has at most two children.
\item[(iii)] If $v_i$ has two children $v_j,v_k$ then $i$ lies between $j$ and $k$.
\end{enumerate}
\end{defn}

Note that, by (i), all edges are either to the left of the root or to the right of the root. Therefore, if we remove the root $r=v_k$ and edges connected to it, we will obtain two rooted binary trees, one on the set $\{v_1,\cdots,v_{k-1}\}$ and the other on the set $\{v_{k+1},\cdots,v_n\}$ (unless $k=1$ or $n$ in which case one of these two sets is empty).

{
\begin{figure}[htbp]
\begin{center}
\begin{tikzpicture}[scale=.6]
\coordinate (R8) at (8,8);
\draw[gray] (R8)--(8,0);
\coordinate (A6) at (6,6);
\coordinate (A9) at (9,6);
\coordinate (B2) at (2,4);
\coordinate (B7) at (7,4);
\coordinate (B10) at (10,4);
\coordinate (C1) at (1,2);
\coordinate (C4) at (4,2);
\coordinate (D3) at (3,0);
\coordinate (D5) at (5,0);
\foreach \x in {R8,A6,A9,B2,B7,B10,C1,C4,D3,D5}
	\draw[fill] (\x) circle[radius=3pt];
\draw[thick] (C1)--(B2)--(C4)--(D3) (C4)--(D5);
\draw[thick] (B2)--(A6)--(B7)  (A6)--(R8)--(A9)--(B10);
\draw (R8) node[above]{$v_8$};
\draw (5.7,6) node[above]{$v_6$};
\draw (A9) node[right]{$v_9$};
\draw (B2) node[left]{$v_2$};
\draw (B7) node[left]{$v_7$};
\draw (B10) node[left]{$v_{10}$};
\draw (C1) node[left]{$v_1$};
\draw (C4) node[right]{$v_4$};
\draw (D3) node[left]{$v_3$};
\draw (D5) node[right]{$v_5$};
\draw (0,7) node{$T:$};
\end{tikzpicture}
\color{blue}
\caption{Binary tree on 10 vertices. The root is $v_8$. Removing the root gives two subtrees. }
\label{Fig: binary tree example}
\end{center}
\end{figure}
}

{\color{blue}
\begin{eg}\label{eg: a binary tree}
Figure \ref{Fig: binary tree example} gives one example of a binary tree on the set $V=\{v_1,\cdots,v_{10}\}$. The root is $v_8$ and, if we remove the root, we obtain two binary trees, one on $v_1,v_2,\cdots,v_7$ and the other on $v_9,v_{10}$. Condition (i) says that, since $E=v_6-v_2\in T$, $v_3,v_4,v_5$ are $<v_2$ (i.e., they are descendants of $v_2$, the lower vertex of $E$). Condition (iii) says that if a vertex has two children, one is on the left and one is on the right. For example, $v_2$ has two children: a left child $v_1$ and a right child $v_4$.
\end{eg}
}

Given a noncrossing partition $\cS$ on $V$, every parallel subset $P$ of $\cS$ is totally ordered {\color{blue}by the lateral ordering. (Recall that a parallel subset of $\cS$ is a maximal set of pairwise parallel parts in $\cS$).} So, it makes sense to talk about a binary tree on $P$.

\begin{defn}
A \emph{binary forest} on a noncrossing partition $\cS$ on $V$ is define to be the union of binary trees, one on each parallel subset of $\cS$.
\end{defn}

For example, if the parts of $\cS$ have disjoint support then a binary forest on $\cS$ is the same as a binary tree on $\cS$. In other cases, we need to add edges to the forest to get a tree.

\begin{prop}
Let $F$ be a binary forest on a noncrossing partition $\cS$. Then $F\subseteq E(\cS)$ and there exists a unique rooted tree $T$ on $\cS$ so that $F\subseteq T\subseteq E(\cS)$.
\end{prop}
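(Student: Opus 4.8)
The plan is to check $F\subseteq E(\cS)$ by inspection, then to write down the tree $T$ explicitly and prove uniqueness by showing that the parent of each part in \emph{any} admissible tree is forced. That $F\subseteq E(\cS)$ is immediate: $F$ is a union of binary trees $T_P$, one on each parallel set $P$ of $\cS$, and each edge of $T_P$ has the form $B-A$ with $A,B\in P$; the members of a parallel set are pairwise parallel, hence adjacent, so every such edge lies in $E(\cS)$ by clause (1) of its definition.

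For existence I would use the fact --- easily verified from the supports of the parts of a noncrossing partition being pairwise nested or disjoint, so that every non-maximal part is covered by a unique part --- that the parallel sets of $\cS$ partition $\cS$: there is one parallel set $P_{\max}$ consisting of all maximal parts, and, for each part $C$ that covers something, one parallel set for each maximal run of consecutive children of $C$ having no element of $C$ laterally between them. Writing $r_P$ for the root of the binary tree $T_P$ on a parallel set $P$, set
\[
  T \;=\; F \;\cup\; \bigl\{\, C-r_P \;:\; P\neq P_{\max},\ P\text{ a run of children of }C \,\bigr\}.
\]
Each added edge $C-r_P$ lies in $E(\cS)$ because $C$ covers $r_P$, so $F\subseteq T\subseteq E(\cS)$; counting, $|T|=\sum_P(|P|-1)+(k-1)=|\cS|-1$, where $k$ is the number of parallel sets. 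The graph $T$ is connected, since each $T_P$ spans $P$, the added edge through $r_P$ joins $P$ to the part $C$ covering it, and iterating up the support forest links every parallel set to $P_{\max}$, which $T_{P_{\max}}$ connects internally; a connected graph on $\cS$ with $|\cS|-1$ edges is a tree on $\cS$. Finally, in $T$ every part other than $r_{P_{\max}}$ has exactly one parent --- its parent inside its own $T_P$ if it is not $r_P$, or the covering part $C$ if it is $r_P$ with $P\neq P_{\max}$ --- while $r_{P_{\max}}$ has none, so $r_{P_{\max}}$ is the unique maximal element and $T$ is a rooted tree.

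For uniqueness, let $T'$ be any rooted tree on $\cS$ with $F\subseteq T'\subseteq E(\cS)$; I claim its parent function agrees with that of $T$. If a part $A$ is not the root $r_P$ of the binary tree on its parallel set $P$, then the edge from $A$ to its parent inside $T_P$ already lies in $F\subseteq T'$, and hence is the parent-edge of $A$ in $T'$. If $A=r_P$, then no edge of $F$ has $A$ as its child end, so a parent-edge $p'(A)-A$ of $A$ in $T'$ lies in $T'\setminus F\subseteq E(\cS)$; it cannot join two parts of the same parallel set (adjoining such an edge to the already connected $T_P$ would create a cycle in $T'$), so it is not of the parallel type, hence of the covering type, and since $E(\cS)$ records a covering only with the covering part as parent, $p'(A)$ must be the unique part covering $A$ --- which requires $P\neq P_{\max}$. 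Consequently $r_{P_{\max}}$ is the only part that can fail to have a parent in $T'$, so it is the root of $T'$, every other parent is determined as above, and $T'=T$.

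The main obstacle is the bookkeeping around parallel sets: verifying that they partition $\cS$ into $P_{\max}$ and the ``runs of children'' (which is where the nested-or-disjoint structure of supports enters), together with the structural point used in the uniqueness step --- that a parent-edge $p'(A)-A$ taken from $E(\cS)$ necessarily has $p'(A)$ parallel to $A$ or $p'(A)$ covering $A$, never the reverse. Everything else reduces to elementary graph theory and edge-counting.
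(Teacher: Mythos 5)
Your proposal is correct and follows essentially the same route as the paper: note that edges of $F$ join parallel (hence adjacent) parts so $F\subseteq E(\cS)$, then observe that the only edges one can add at the root of each non-maximal parallel set are the forced ones pointing to the covering part, which yields both existence and uniqueness of $T$. The paper states this very tersely; your extra bookkeeping (edge count, connectivity, and the forced-parent argument) just makes explicit what the paper leaves implicit.
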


\begin{proof} Since $E(\cS)$ contains all edges whose endpoints are parallel, $F$ is contained in $E(\cS)$.

The root of the parallel set of all maximal elements of $\cS$ must be the root of $T$ since no edges in $E(\cS)$ start at a maximal part by definition. For every other  parallel set of $\cS$, we need to add an edge starting at its root. This edge must point to the part which covers the parallel set. After adding these required edges, we have the unique rooted tree $T\subset E(\cS)$ containing $F$.
\end{proof}

\begin{rem}\label{rem: rooted tree generated by F}
We call $T$ the \emph{rooted tree generated by $F$}. Then $T$ is a basis for the kernel of the augmentation map $\vare:\ZZ\cS\to \ZZ$ given by $\vare(A)=1$ for every $A\in\cS$.
\end{rem}

\subsection{Binary forests as morphisms}

Suppose that $\cS,\cR$ are noncrossing partitions of $V$ and $\cS$ is a refinement of $\cR$. Then we have an epimorphism of sets $\pi:\cS\onto \cR$ sending each part of $\cS$ to the unique part of $\cR$ which contains it. 

\color{blue} 
\begin{defn}\label{def: cluster morphism T:R to S}We define a \emph{cluster morphism} $[T]:\cR\to \cS$ to be the union $T=\coprod T_W$ of a set of rooted trees $T_W$ obtained in the following way. Each $W\in \cR$ is totally ordered being a subset of $V$. And $\pi^{-1}(W)\subseteq\cS$ is a noncrossing partition of $W$. We choose a binary forest $F_W$ on each $\pi^{-1}(W)$ and let $T_{W}$ be the rooted tree that it generates. The disjoint union $T=\coprod T_W$ of a set of rooted trees $T_w$ chosen in this way is one cluster morphism $[T]:\cR\to \cS$. The nomenclature is explained in Remark \ref{rem: cluster morphism nomenclature}.
\end{defn}
\color{black}

{\color{blue} Since $T_W\subset E(\pi^{-1}(W))\subset E(\cS)$, $T$ will be a subset of $E(\cS)$}. We need the following relative version of the set $E(\cS)$.
\[
	E(\cS,\cR):=\coprod_{W\in\cR}E(\pi^{-1}(W))
\]
Then $E(\cS,\cR)$ is a subset of the kernel of the linear epimorphism $\pi_\ast:\ZZ\cS\onto \ZZ\cR$ induced by $\pi$. Applying the discussion of the previous subsection to each noncrossing partition $\pi^{-1}(W)$ separately, we get the following.

\begin{prop}
The edges (elements) of any cluster morphism $[T]:\cR\to \cS$ lie in $E(\cS,\cR)$ and form a basis for the kernel of $\pi_\ast$. In particular, $|T|=rk\,\cR-rk\,\cS$.\qed
\end{prop}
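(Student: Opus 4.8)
The plan is to reduce everything to the single–partition results already in hand: the Proposition saying that a binary forest on a noncrossing partition lies in a unique rooted tree contained in its edge set, and the Remark saying that the rooted tree generated by a binary forest is a basis for the kernel of the augmentation map. The extra ingredient is the observation that $\pi_\ast$ is \emph{block diagonal} with respect to the fibers of $\pi$.

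First I would record the fiber decomposition. Since $\cS=\coprod_{W\in\cR}\pi^{-1}(W)$ as sets, there is an internal direct sum $\ZZ\cS=\bigoplus_{W\in\cR}\ZZ\,\pi^{-1}(W)$, and under it $\pi_\ast$ carries the summand $\ZZ\,\pi^{-1}(W)$ into the rank–one summand $\ZZ W\subseteq\ZZ\cR$ via the augmentation $\vare_W\colon A\mapsto W$ (every part of $\pi^{-1}(W)$ maps to the single generator $W$). Hence $\ker\pi_\ast=\bigoplus_{W\in\cR}\ker\vare_W$, where $\vare_W\colon\ZZ\,\pi^{-1}(W)\to\ZZ$ is the ordinary augmentation of the noncrossing partition $\pi^{-1}(W)$ of $W$. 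A disjoint union of bases of the summands is a basis of the direct sum, so it suffices to treat each fiber separately.

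Then for a fixed $W$ the definition of a cluster morphism makes $T_W$ the rooted tree generated by a binary forest $F_W$ on $\pi^{-1}(W)$. The earlier Proposition gives $T_W\subseteq E(\pi^{-1}(W))$, so $T=\coprod_W T_W\subseteq\coprod_W E(\pi^{-1}(W))=E(\cS,\cR)$, which is the first assertion; and the Remark says precisely that $T_W$ is a basis for $\ker\vare_W$. Assembling over $W$ through the previous paragraph shows $T$ is a basis for $\ker\pi_\ast$, the second assertion. Finally the rank count: $|T|=\sum_{W\in\cR}|T_W|=\sum_{W\in\cR}\bigl(|\pi^{-1}(W)|-1\bigr)=|\cS|-|\cR|$, using that a rooted tree on a set has one fewer edge than the set has elements; and $rk\,\cR-rk\,\cS=(|V|-|\cR|)-(|V|-|\cS|)=|\cS|-|\cR|$, giving $|T|=rk\,\cR-rk\,\cS$.

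The only step needing care is the bookkeeping in the fiber decomposition — checking that $\pi_\ast$ genuinely respects the block structure and, more importantly, that $\ker\pi_\ast$ is the direct sum of the fiberwise kernels and nothing larger. Everything else is a direct citation of the Proposition and Remark of the previous subsection, so I do not expect a genuine obstacle, only the need to state the direct–sum decomposition cleanly.
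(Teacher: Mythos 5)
Your proposal is correct and is essentially the paper's own argument: the paper marks this proposition with \qed precisely because it follows by applying the earlier Proposition and Remark (the rooted tree generated by a binary forest lies in the edge set and is a basis of the augmentation kernel) to each fiber $\pi^{-1}(W)$ separately, which is exactly your block decomposition of $\pi_\ast$. Your write-up merely makes explicit the direct-sum bookkeeping and the rank count that the paper leaves to the reader.
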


\begin{lem}
Let $\cR,\cS$ be as above. Then $E(\cS,\cR)=E(\cS)\cap \ker \pi_\ast$. In particular, $E(\cS,\cR)\subset E(\cS)$.
\end{lem}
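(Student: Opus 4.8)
The plan is to reduce the equality to one \emph{localization} statement: for parts $A,B\in\cS$ with $\pi(A)=\pi(B)=:W$ (equivalently $A-B\in\ker\pi_\ast$), $A$ and $B$ are adjacent in $\cS$ if and only if they are adjacent in the noncrossing partition $\pi^{-1}(W)$ of $W$. Granting this, the Proposition characterizing $E$ (``adjacent $\iff$ parallel or one covers the other''), together with the obvious fact that the vertical (support‑inclusion) order on $\pi^{-1}(W)$ is the restriction of the one on $\cS$, will identify not only the adjacent pairs but the directed edges, and $E(\cS,\cR)=\coprod_{W}E(\pi^{-1}(W))$ will match $E(\cS)\cap\ker\pi_\ast$ on the nose.

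For the localization claim I would argue as follows. Let $\cS'$ be $\cS$ with $A,B$ merged and let $\cU$ be $\pi^{-1}(W)$ with $A,B$ merged, so that $\cU$ is a partition of $W$ and $\cS'$ still refines $\cR$, with fiber $\pi^{-1}(W')$ over $W'\neq W$ and fiber $\cU$ over $W$. If $\cS'$ is noncrossing, then $\cU$ is noncrossing as a partition of $W$, since its parts are parts of $\cS'$ and a crossing quadruple $a<b<c<d$ inside $W$ is a crossing quadruple inside $V$. Conversely, if $\cU$ is noncrossing, then $\cS'$ is the union, over the parts of $\cR$, of the noncrossing partitions $\pi^{-1}(W')$ ($W'\ne W$, noncrossing because they are subcollections of the noncrossing $\cS$) and $\cU$, so $\cS'$ is noncrossing by Proposition \ref{characterization of refinement}. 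Thus merging $A,B$ keeps $\cS$ noncrossing if and only if merging $A,B$ keeps $\pi^{-1}(W)$ noncrossing, i.e.\ $A,B$ are adjacent in $\cS$ iff they are adjacent in $\pi^{-1}(W)$. This is the heart of the matter, and it is short precisely because Proposition \ref{characterization of refinement} does the work.

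It then remains to track directions. If $A-B\in E(\pi^{-1}(W))$ then $\pi(A)=\pi(B)=W$ gives $A-B\in\ker\pi_\ast$, and $A,B$ are adjacent in $\pi^{-1}(W)$, hence in $\cS$; now if $A$ covers $B$ in $\pi^{-1}(W)$ then $supp\,B\subsetneq supp\,A$, a relation internal to $\cS$, so among the adjacent pair $A,B$ in $\cS$ it is $A$ that covers $B$, while if $A,B$ are parallel in $\pi^{-1}(W)$ their supports are disjoint, so they are vertically incomparable in $\cS$ and hence parallel there. Either way $A-B\in E(\cS)$, giving $E(\cS,\cR)\subseteq E(\cS)\cap\ker\pi_\ast$. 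The reverse inclusion is the mirror image: $A-B\in E(\cS)\cap\ker\pi_\ast$ forces $A,B\in\pi^{-1}(W)$, the localization claim moves adjacency down to $\pi^{-1}(W)$, and the same support‑containment‑versus‑disjointness dichotomy shows the direction of $A-B$ is the one needed for membership in $E(\pi^{-1}(W))$. The one subtlety to watch — and the only place the argument could go wrong if done carelessly — is that adjacency in $\cS$ is an a priori global condition: a part $C$ lying outside $W$ might in principle obstruct the merge of $A$ and $B$. Proposition \ref{characterization of refinement} is exactly what guarantees this cannot happen, so the potential obstacle is already neutralized and the rest is bookkeeping.
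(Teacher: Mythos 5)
Your proof is correct and takes essentially the same route as the paper's: the paper likewise reduces everything to the claim that, for $A,B$ in one part $W\in\cR$, adjacency in $\pi^{-1}(W)$ is equivalent to adjacency in $\cS$, citing Proposition \ref{characterization of refinement}. You simply spell out both directions of that localization and the parallel-versus-covering bookkeeping that the paper leaves implicit.
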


\begin{proof}
Suppose that $A,B$ are parts of $\cS$ which lie in one part $W\in\cR$. Then, it follows from Proposition \ref{characterization of refinement} that $A,B$ are adjacent as parts of the noncrossing partition $\pi^{-1}(W)$ of $W$ if and only if they are adjacent as parts of $\cS$. Therefore, $A-B$ lies in $E(\cS,\cR)$ if and only if it lies in $E(\cS)$. The lemma follows.
\end{proof}

\begin{prop}
Let $\cR,\cS$ be as above and suppose $\cR$ is a refinement of $\cQ$. Then $E(\cS,\cR)=E(\cS,\cQ)\cap\ker\pi_\ast$. In particular, $E(\cS,\cR)$ is a subset of $E(\cS,\cQ)$.\qed
\end{prop}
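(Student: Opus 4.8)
The plan is to derive this as a purely formal corollary of the preceding Lemma, using no further combinatorics about noncrossing partitions. First I would fix notation: write $\pi\colon\cS\onto\cR$ and $q\colon\cR\onto\cQ$ for the canonical projections of partitions and set $\rho=q\circ\pi\colon\cS\onto\cQ$. Since refinement of partitions is transitive, $\rho$ is precisely the canonical projection witnessing that $\cS$ is a refinement of $\cQ$, and on free abelian groups $\rho_\ast=q_\ast\circ\pi_\ast$, so in particular $\ker\pi_\ast\subseteq\ker\rho_\ast$.

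Next I would apply the preceding Lemma twice: applied with $\cS$ a refinement of $\cR$ it gives $E(\cS,\cR)=E(\cS)\cap\ker\pi_\ast$, and applied with $\cS$ a refinement of $\cQ$ it gives $E(\cS,\cQ)=E(\cS)\cap\ker\rho_\ast$. Intersecting the second identity with $\ker\pi_\ast$ and invoking $\ker\pi_\ast\subseteq\ker\rho_\ast$ then yields
\[
E(\cS,\cQ)\cap\ker\pi_\ast=E(\cS)\cap\ker\rho_\ast\cap\ker\pi_\ast=E(\cS)\cap\ker\pi_\ast=E(\cS,\cR),
\]
which is the asserted equality. The final inclusion $E(\cS,\cR)\subseteq E(\cS,\cQ)$ follows at once, or can be read off directly from $E(\cS)\cap\ker\pi_\ast\subseteq E(\cS)\cap\ker\rho_\ast$.

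I do not expect any genuine obstacle here; the argument is bookkeeping with intersections of subsets of $\ZZ\cS$. The only two points deserving a sentence are that refinement of noncrossing partitions is transitive, so that the Lemma legitimately applies with $\cS$ viewed as a refinement of $\cQ$, and that the composite of the two projection maps is again the projection map, which is what forces the kernels to nest.
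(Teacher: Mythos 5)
Your argument is correct, and it is essentially the proof the paper leaves implicit: the proposition carries a \qed with no written proof precisely because it follows formally from the preceding lemma applied to both pairs $(\cS,\cR)$ and $(\cS,\cQ)$, together with the observation that the composite projection $\ZZ\cS\to\ZZ\cR\to\ZZ\cQ$ is the projection for $\cS$ as a refinement of $\cQ$, so $\ker\pi_\ast\subseteq\ker\rho_\ast$. Your bookkeeping with the two applications of the lemma and the nested kernels is exactly the intended route.
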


A small category is defined to be a set of objects, a set of morphisms and a law of composition. We come to the last and most difficult part of the definition.

\subsection{Composition of cluster morphisms}

We define the \emph{noncrossing partition category} $\cN\cP(n)$ to be the category whose \emph{objects} are the noncrossing partitions of $\{1,\cdots,n\}$, whose \emph{morphisms} $[T]:\cR\to \cS$ are the cluster morphisms defined above with composition defined below assuming two theorems \ref{thm1: morphisms are maximal compatible sets of edge vectors} and \ref{thm2: bijection ET to E} which we will prove later.

{\color{blue}
We will use the following example to illustrate composition of cluster morphisms.

\begin{eg}\label{eg: composition of morpisms}
Let $\cP$ be the partition of $V=\{1,2,\cdots,8\}$ with one part $V$. Let $\cQ$ be partition of $V$ with two parts $P=\{2,3,4,5,6\}$ and $U=\{1,7,8\}$. Since $U$ covers $P$, there is only one morphism $[R]:\cP\to \cQ$ given by 
\[
R=\{U-P\}.\]
{
\begin{figure}[htbp]
\begin{center}
{
\begin{tikzpicture}

\begin{scope}
	\foreach \x in {0.55,2.55,4.55,5.55}
	{\draw[thick, rounded corners=.35cm] (\x,-.35) rectangle +(.9,.75);
	} 
	\draw (-1.5,.5) node{$\cS=$}
	(0,0) node{1}
	(1,0) node{$B$:2}
	(2,0) node{3}
	(2,.8) node{$C$}
	(3,0) node{$D$:4}
	(4,0) node{5}
	(5,0) node{$E$:6}
	(6,0) node{$F$:7}
	(7,0) node{8};
	\draw[thick,rounded corners=.35cm]  (-.4,2)--(-.4,-.4)--(.4,-.4)--(.4,1.5)--(4.6,1.5)--(5.4,1.5)--(6.6,1.5)--(6.6,-.4)--(7.4,-.4)--(7.4,2)--cycle;
	\draw (0,1.5) node{$A$};
\begin{scope}[xshift=1cm]
	\draw[thick,rounded corners=.35cm]  (.6,1.2)--(.6,-.4)--(1.4,-.4)--(1.4,.7)--(2.6,.7)--(2.6,-.4)--(3.4,-.4)--(3.4,1.2)--cycle;
	\end{scope} 
\end{scope} 
\begin{scope}[yshift=-2cm]
	\foreach \x in {0.55,5.55}
	{\draw[thick, rounded corners=.35cm] (\x,-.35) rectangle +(.9,.75);
	} 
	{\draw[thick, rounded corners=.35cm] (1.55,-.35) rectangle +(3.9,.75);
	} 
	\draw (-1.5,.5) node{$\cR=$}
	(0,0) node{1}
	(1,0) node{$B$:2}
	(2,0) node{3}
	(2.7,0) node{4}
	(3.5,0) node{$W$}
	(4.3,0) node{5}
	(5,0) node{6}
	(6,0) node{$F$:7}
	(7,0) node{8};
	\draw[thick,rounded corners=.35cm]  (-.4,1)--(-.4,-.4)--(.4,-.4)--(.4,.5)--(6.6,.5)--(6.6,-.4)--(7.4,-.4)--(7.4,1)--cycle;
	\draw (0,0.6) node{$A$};

\end{scope} 
\end{tikzpicture}
}
\color{blue}
\caption{Two refinements of $\cQ=\{P,U\}$ given by $\cR=\{B,W,F,A\}$ where $P=B\cup W$ and $U=A\cup F$ and $\cS$ which is a further refinement of $\cR$ given by subdividing $W$ into $W=C\cup D\cup E$.}
\label{Fig: First example of partitions R and S}
\end{center}
\end{figure}
}

Consider the two refinements $\cR$, $\cS$ of $\cQ$ given in Figure \ref{Fig: First example of partitions R and S}. Since $\cS$ is a refinement of $\cR$ we have $\pi:\cS\onto \cR$. $\pi^{-1}(W)$ has two parallel sets $\{C,E\}$ and $\{D\}$. Let $[T]:\cR\to \cS$ be the cluster morphism given by the binary tree making $E$ the right child of $C$. Since $C$ covers $D$,
\[
\xymatrixrowsep{1pt}\xymatrixcolsep{10pt}
\xymatrix{
& &
	C\ar@{-}[dd]\ar@{-}[ddr]\\
T= &
	\{C-D,C-E\}:\\	
& 
	&
	D & E.
	}
\]
Since $\cR$ is a refinement of $\cQ$ we have $\rho:\cR\onto \cQ$. Since $\rho^{-1}(P)=\{B,W\}$ is a parallel set and $A$ covers $F$ in $\rho^{-1}(U)$, there are two morphisms $\cQ\to \cR$ and one of them is $[S]$ given by taking $W$ over $B$. Since $B$ is to the left of $W$, it becomes the left child of $W$:
\[
\xymatrixrowsep{1pt}\xymatrixcolsep{10pt}
\xymatrix{
& &
	&W\ar@{-}[ddl] & A\ar@{-}[dd]\\
S= &
	\{W-B,A-F\}:\\	
& 
	&
	B && F.
	}
\]
We will compute the composition of these three morphisms
\[
	\cP\xrightarrow{[R]}\cQ\xrightarrow{[S]}\cR\xrightarrow{[T]}  \cS.
\]
\end{eg}}

\begin{defn}
Let $\cR,\cS$ be objects of $\cN\cP(n)$ so that $\cS$ is a refinement of $\cR$. We define two edges $E,E'\in E(\cS,\cR)$ to be \emph{compatible} if there is a cluster morphism from $\cR$ to $\cS$ which contains both of them.
\end{defn}

{\color{blue} In Example \ref{eg: composition of morpisms}, $E(\cS,\cR)=\{C-D,C-E,E-C\}$. There are two compatible pairs: $\{C-D,C-E\}$ and $\{C-D,E-C\}$. Indeed, the first pair gives the cluster morphism $[T]$.}
\begin{thm}\label{thm1: morphisms are maximal compatible sets of edge vectors}
A subset $T$ of $E(\cS,\cR)$ gives a cluster morphism $[T]:\cR\to\cS$ if and only if the elements of $T$ are pairwise compatible and $T$ is maximal with this property.
\end{thm}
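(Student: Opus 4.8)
The statement has two directions. The easy direction is that a cluster morphism $[T]:\cR\to\cS$ is a pairwise compatible subset of $E(\cS,\cR)$: pairwise compatibility is immediate since $T$ itself is a cluster morphism containing any two of its edges, and maximality follows because by the Proposition above any cluster morphism has exactly $rk\,\cR-rk\,\cS$ edges, which is the size of a basis of $\ker\pi_\ast$, so no edge of $E(\cS,\cR)$ can be added to $T$ while keeping it linearly independent — and any cluster morphism containing a would-be larger set would have to be linearly independent of size exceeding the rank, a contradiction. So the content is the converse: a maximal pairwise compatible subset $T\subseteq E(\cS,\cR)$ is a cluster morphism.

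**Reduction to a single part.** Since $E(\cS,\cR)=\coprod_{W\in\cR}E(\pi^{-1}(W))$ and compatibility of two edges lying in different summands is automatic (take the union of any two cluster morphisms on the respective parts), a maximal pairwise compatible set $T$ decomposes as $\coprod_W T_W$ with each $T_W\subseteq E(\pi^{-1}(W))$ maximal pairwise compatible, and $T$ is a cluster morphism iff each $T_W$ is a binary forest on $\pi^{-1}(W)$ generating... wait — more precisely, iff each $T_W$ together with the forced root edges is the rooted tree generated by a binary forest. So it suffices to treat the absolute case: show that a maximal pairwise compatible subset of $E(\cS)$ (where now "compatible" means "contained in a common binary forest on $\cS$") is exactly the edge set of a binary forest on $\cS$ — equivalently, the rooted tree it generates comes from a binary forest. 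Here I would further decompose $E(\cS)$ along parallel sets: an edge $A-B$ is either a "covering" edge (forced, $A$ covers $B$) or a "parallel" edge (both endpoints in one parallel set $P$). Two covering edges are always compatible with everything; two parallel edges in different parallel sets are compatible; so the only real constraints are among parallel edges within a single parallel set $P$, where the question becomes: a maximal pairwise compatible set of edges on the totally ordered set $P$ is a binary tree on $P$. This is the crux.

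**The crux: binary trees on a totally ordered set.** On a totally ordered set $P=\{p_1<\cdots<p_m\}$ I must show that a set of edges is pairwise "binary-tree-compatible" and maximal iff it is a binary tree. One direction is again the rank count: a binary tree on $P$ has $m-1$ edges and is linearly independent, so maximal. For the converse I would argue that pairwise compatibility forces the three defining conditions of a binary tree to be consistent, then that a maximal such set is connected/spanning. The natural tool is induction on $m$: given a maximal pairwise compatible set $T$ on $P$, the edge of $T$ pointing to the top element $r=p_k$ of $P$ (there is one, else $T\cup\{$some edge into $r\}$ would still be compatible — I must check this) splits $P$ into $\{p_1,\dots,p_{k-1}\}$ and $\{p_{k+1},\dots,p_m\}$; compatibility of the remaining edges with the chosen top edge, via the binary-tree conditions, forces each remaining edge to stay within one of the two blocks; restricting gives maximal pairwise compatible sets on smaller sets, and induction finishes. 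I expect the pairwise-compatibility bookkeeping — translating "$E,E'$ lie in a common binary tree" into combinatorial incidence conditions, and verifying that these local two-at-a-time conditions genuinely propagate to a global binary-tree structure with no obstruction — to be the main technical obstacle; this is presumably exactly the place where, as the introduction hints, "backward links being flag complexes" is really the statement being used, i.e. pairwise compatibility implies joint compatibility. I would isolate that as a lemma: \emph{any pairwise compatible subset of the edges on a totally ordered set extends to a binary tree}, and prove it by the induction just sketched, handling the base case $m\le 2$ directly.

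**Assembling.** Given the crux lemma, the general case follows formally: a maximal pairwise compatible $T\subseteq E(\cS,\cR)$ restricts to a maximal pairwise compatible set on each parallel set of each $\pi^{-1}(W)$, hence to a binary tree there by the lemma; their union is a binary forest $F_W$ on each $\pi^{-1}(W)$; and maximality forces $T$ to also contain all the covering edges needed, so that $T=\coprod_W T_{W}$ where $T_W$ is the rooted tree generated by $F_W$ — i.e. $T$ defines the cluster morphism $[\coprod_W T_W]:\cR\to\cS$. Finally the rank count ($|T|=rk\,\cR-rk\,\cS$ from the Proposition) confirms there is no room for $T$ to be strictly larger, closing the argument.
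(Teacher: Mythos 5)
Your reduction breaks down at the treatment of covering edges. You assert that covering edges are ``compatible with everything,'' so that the only genuine constraints are among parallel edges inside a single parallel set; that is false, and the error is fatal to your assembling step. Take $V=\{1,2,3,4\}$, $\cR=\{V\}$, and $\cS=\{A,B,C\}$ with $A=\{1,4\}$, $B=\{2\}$, $C=\{3\}$. The parallel set $\{B,C\}$ is covered by $A$, and the only cluster morphisms $\cR\to\cS$ are $\{C-B,\ A-C\}$ and $\{B-C,\ A-B\}$: a cluster morphism contains exactly one covering edge per non-maximal parallel set, namely the edge emanating from the root of the binary tree chosen on that parallel set. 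Hence the two covering edges $A-B$ and $A-C$ are incompatible with each other, and a covering edge is compatible with a parallel edge of its parallel set only conditionally ($A-C$ is compatible with $C-B$ but not with $B-C$). Under your claims, maximality would force a maximal pairwise compatible set to contain every covering edge into every parallel set, producing sets such as $\{A-B,\ A-C,\ B-C\}$, which are pairwise compatible in your sense but are not cluster morphisms and violate your own rank count. Consequently your crux lemma in its unaugmented form (maximal compatible edge sets on a totally ordered set are binary trees) does not suffice: what is needed, and what the paper actually proves, is the augmented statement that maximal pairwise compatible subsets of $G(P)$ --- edges together with root vectors, the covering edges playing the role of root vectors under the block decomposition of $E(\cS,\cR)$ --- are augmented binary trees, i.e.\ a binary tree together with the root vector at its root. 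The heart of that proof is a lemma you would also need and did not supply: every maximal compatible set contains a (unique) root vector, which the paper proves by following right parents from a longest edge; this is also exactly the step behind the root-splitting induction you flagged with ``I must check this.''

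A secondary point: your maximality argument in the easy direction is circular as written. Pairwise compatibility of $T\cup\{E\}$ only says that each pair lies in \emph{some} cluster morphism, not that a single cluster morphism contains the whole set, so you cannot invoke the linear independence of a cluster morphism containing it. Maximality of a cluster morphism does hold, but the clean route is via the hard direction: extend $T\cup\{E\}$ to a maximal pairwise compatible set, which by the hard direction is a cluster morphism and hence has exactly $rk\,\cR-rk\,\cS$ elements, contradicting $|T|=rk\,\cR-rk\,\cS$.
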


\begin{defn}
Let $\cQ,\cR,\cS$ be objects of $\cN\cP(n)$ so that $\cS$ is a refinement of $\cR$ and $\cR$ is a refinement of $\cQ$. Let $[T]:\cR\to\cS$ be a cluster morphism. Recall that $T\subset E(\cS,\cR)\subseteq E(\cS,\cQ)$. We define $E_T(\cS,\cQ)$ to be the set of all element of $E(\cS,\cQ)$ which are compatible with the elements of $T$ but not contained in $T$. 
\end{defn}
{\color{blue}
In Example \ref{eg: composition of morpisms}, $E(\cS,\cQ)=\{A-F,C-D,B-C,C-B,C-E,E-C,B-E,E-B\}$. However, $T$ is not compatible with $E-C, B-E,E-B$. So, 
\[
E_T(\cS,\cQ)=\{A-F,B-C,C-B\}.\]}
\begin{thm}\label{thm2: bijection ET to E}
The linear map $\pi_\ast:\ZZ\cS\to\ZZ\cR$ induces a bijection $E_T(\cS,\cQ)\cong E(\cR,\cQ)$. Furthermore, $E,E'$ are compatible edges in $E_T(\cS,\cQ)$ if and only if the corresponding elements of $E(\cR,\cQ)$ are compatible.
\end{thm}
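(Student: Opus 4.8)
The idea is to fix the cluster morphism $[T]\colon\cR\to\cS$ and to understand \emph{all} cluster morphisms $\cQ\to\cS$ whose edge set contains $T$; the map of the theorem is then ``delete $T$, apply $\pi_\ast$''. First reduce to the case $\cQ=\{V\}$: the sets $E(\cS,\cQ)$, $E(\cR,\cQ)$ and the edge set $T$ all decompose as disjoint unions over the parts of $\cQ$, and $\pi_\ast$ respects the decomposition, so it suffices to treat one part at a time. With $\cQ=\{V\}$ we have $E(\cS,\cQ)=E(\cS)$, $E(\cR,\cQ)=E(\cR)$, a cluster morphism $\cQ\to\cS$ is exactly a rooted tree on $\cS$ generated by a binary forest on $\cS$ (a basis of $\ker(\vare_\cS\colon\ZZ\cS\to\ZZ)$, by Remark~\ref{rem: rooted tree generated by F}), and likewise for $\cR$. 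Write $T=\coprod_{W\in\cR}T_W$ as in the definition of a cluster morphism, with $T_W$ a rooted tree on the fiber $\pi^{-1}(W)\subseteq\cS$ and root $\rho(W)\in\cS$.

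The engine is the following structural claim, to be proved by a combinatorial analysis of binary forests. (i) If $T'\colon\cQ\to\cS$ is a cluster morphism with $T\subseteq T'$, then contracting each ``blob'' $\pi^{-1}(W)$ ($W\in\cR$) to a point turns the tree $T'$ into a cluster morphism $\cQ\to\cR$ whose edge set is $\pi_\ast(T'\setminus T)$. (ii) Conversely, every cluster morphism $S\colon\cQ\to\cR$ lifts uniquely to a set $\widetilde S\subseteq E(\cS)$ such that $T\cup\widetilde S$ is a cluster morphism $\cQ\to\cS$ and $\pi_\ast$ restricts to a bijection $\widetilde S\to S$; the lift replaces an edge $W-W'$ of $S$ by the edge of $E(\cS)$ joining $\pi^{-1}(W)$ to $\pi^{-1}(W')$ that is dictated by $\rho(W)$, $\rho(W')$ and the binary structure. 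The graph theory here is straightforward: inside the tree $T'$ the subgraph induced on $\pi^{-1}(W)$ is already the spanning tree $T_W$ (a further internal edge would be a cycle), so $T'\setminus T$ is precisely the set of edges of $T'$ between distinct blobs, no two of them join the same pair of blobs, contraction of a tree along connected subtrees produces a tree on $\cR$, $\pi_\ast$ is injective on $T'\setminus T$, and the exact sequence $0\to\ker\pi_\ast\to\ker\vare_\cS\to\ker\vare_\cR\to 0$ (with $T$ a basis of $\ker\pi_\ast$) makes the ranks match. The real content is that the contracted tree is \emph{generated by a binary forest} on $\cR$, and dually that the lift $\widetilde S$ is generated by a binary forest on $\cS$: one must check that the root stays in the parallel set of maximal parts, that no vertex acquires three children, and that a two-child vertex remains laterally between its children, as one passes between $\cS$ and $\cR$. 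This uses that each $T_W$ is rooted (so a blob hangs from the rest of $T'$ at the single vertex $\rho(W)$), Proposition~\ref{characterization of refinement} and the adjacency characterization (``adjacent $\iff$ parallel or one covers the other'') applied inside each $\pi^{-1}(W)$, and a careful comparison of the supports in $\RR$ of a part $W\in\cR$ with the supports of the parts of $\cS$ contained in it.

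Granting the structural claim, the theorem follows. Applying (ii) to any cluster morphism $S\colon\cQ\to\cR$ shows $T\subseteq T\cup\widetilde S$, so $T$ is contained in a cluster morphism $\cQ\to\cS$ and is in particular pairwise compatible; then by Theorem~\ref{thm1: morphisms are maximal compatible sets of edge vectors} (applied to the pair $(\cQ,\cS)$) the set $E_T(\cS,\cQ)$ equals $\bigcup_{T'\supseteq T}(T'\setminus T)$. By (i) this maps under $\pi_\ast$ into $\bigcup_S S=E(\cR,\cQ)$, the last equality because every edge of $E(\cR,\cQ)$ lies in some cluster morphism (again Theorem~\ref{thm1: morphisms are maximal compatible sets of edge vectors}); surjectivity of $\pi_\ast$ on $E_T(\cS,\cQ)$ is then immediate from (ii). For injectivity, if $E=A-B$ and $E'=A'-B'$ lie in $E_T(\cS,\cQ)$ with $\pi_\ast E=\pi_\ast E'=W-W'$, then $\pi(A)=\pi(A')=W$ and $\pi(B)=\pi(B')=W'$; inside any cluster morphism $T'\supseteq T$ the binary-forest conditions force the two endpoints of the unique edge of $T'\setminus T$ over $W-W'$ to be the specific parts of $\pi^{-1}(W),\pi^{-1}(W')$ pinned down by $\rho(W),\rho(W')$ and the two-children/lateral-betweenness rules, so $A=A'$ and $B=B'$. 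Finally, $E,E'\in E_T(\cS,\cQ)$ are compatible $\iff$ $T\cup\{E,E'\}$ is pairwise compatible (each pair being compatible, since $E,E'$ are each compatible with $T$) $\iff$ $T\cup\{E,E'\}$ lies in a common cluster morphism $T'\supseteq T$ (Theorem~\ref{thm1: morphisms are maximal compatible sets of edge vectors}) $\iff$, by (i) and (ii) together with the injectivity just proved, $\pi_\ast E$ and $\pi_\ast E'$ lie in a common cluster morphism $\cQ\to\cR$ $\iff$ $\pi_\ast E$ and $\pi_\ast E'$ are compatible.

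I expect the one genuinely hard step to be the ``real content'' of the structural claim: that blob-contraction sends a binary-forest-generated tree on $\cS$ to a binary-forest-generated tree on $\cR$, and that the edge-by-edge lift does the reverse. The linear-algebra and graph-theory bookkeeping is automatic; verifying the root condition and the at-most-two-children/lateral-betweenness conditions requires the fine geometry of supports on the line, and is essentially the local analysis the introduction ascribes to Section~2 (``backward links of objects being flag complexes'').
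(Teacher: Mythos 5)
Your reduction of the theorem to the structural claim (i)--(ii) is sound, and the surrounding bookkeeping (reduction to one part of $\cQ$, the identification $E_T(\cS,\cQ)=\bigcup_{T'\supseteq T}(T'\setminus T)$ via Theorem \ref{thm1: morphisms are maximal compatible sets of edge vectors}, and the derivation of the bijection and of compatibility-preservation from (i)--(ii)) is correct. But the proposal has a genuine gap exactly where you flag it: the structural claim itself is asserted, not proved, and it \emph{is} the theorem. That blob-contraction of a binary-forest-generated tree on $\cS$ yields a binary-forest-generated tree on $\cR$, and that each edge of a cluster morphism $\cQ\to\cR$ has a \emph{unique} lift compatible with $T$ (your injectivity argument silently uses this uniqueness, and your description of the lift as ``dictated by $\rho(W),\rho(W')$'' is not yet a proof -- in general the correct endpoint depends on the lateral position of the other endpoint, not only on the roots), requires the fine analysis of supports, roots, and the two-children/lateral-betweenness conditions. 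Without it, nothing forces the contracted tree to be a cluster morphism, nor the lift to exist or be unique, so the bijection and the compatibility statement are unestablished.

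The paper fills this gap differently and more locally: it proves the case $rk\,\cR=rk\,\cS+1$ (merging a single pair of adjacent parts) by explicit bijections $\sigma_R$ and $\sigma_E$ on the level of augmented binary trees on one parallel set (Corollary \ref{cor: parts compatible with a root vector}, Lemmas \ref{lem: positive edge case} and \ref{lem: negative edge case}), with explicit formulas telling which endpoint of an edge incident to the merged part gets rewritten (e.g.\ $v_b-v_j\mapsto v_a-v_j$ only for $j<a$), and notes that merging can split one parallel set into two ($\cU'$ and $\cU''$); this single-merge case is Lemma \ref{base case of bijection ET to T}. The general case is then an induction on $rk\,\cR-rk\,\cS$, peeling off one edge $E_1\in T$ at a time and composing the resulting bijections, which is how the paper obtains both the bijection and compatibility-preservation without ever having to verify the all-at-once contraction directly. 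If you want to keep your global contraction/lift formulation, the cleanest repair is to prove your claim (i)--(ii) by exactly this one-edge-at-a-time induction, i.e.\ to reduce it to the single-merge lemmas; proving it in one shot would force you to redo the same case analysis simultaneously over all blobs.
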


Let 
\[
\mu_T: E(\cR,\cQ)\to E_T(\cS,\cQ)
\]
be the inverse of the bijection given by the theorem. {\color{blue}
In our example, $\pi:\cS\to\cR$ sends $C$ to $W$ and therefore sends the element $A-F,B-C,C-B$ of $E_T(\cS,\cQ)$ to $A-F,B-W,W-B\in E(\cR,\cQ)$ respectively. $\mu_T$ is the inverse bijection.}

\begin{defn}
The composition of cluster morphisms $[S]:\cQ\to \cR$ and $[T]:\cR\to\cS$ is given by:
\[
	[T]\circ[S]=[T\cup \mu_TS]:\cQ\to\cS.
\]
We verify that this is a cluster morphism. By Theorem \ref {thm1: morphisms are maximal compatible sets of edge vectors}, $S$ is a maximal compatible subset of $E(\cR,\cQ)$. By Theorem \ref {thm2: bijection ET to E}, $\mu_T(S)$ is a maximal compatible subset of $E_T(\cS,\cQ)$. This is equivalent to the statement that $T\cup \mu_T(S)$ is a maximal compatible subset of $E(\cS,\cQ)$ which implies that $[T\cup \mu_T(S)]$ is a cluster morphism $\cQ\to\cS$.
\end{defn}

{\color{blue}
In our example, $\mu_T: E(\cR,\cQ)\to E_T(\cS,\cQ)$, computed above, sends $S=\{W-B,A-F\}$ to $\{C-B,A-F\}$. So:
\[
\xymatrixrowsep{1pt}\xymatrixcolsep{10pt}
\xymatrix{
& &
	&C\ar@{-}[ddl]\ar@{-}[dd]\ar@{-}[ddr] && A\ar@{-}[dd]\\
T\cup \mu_T(S)= &
	\{C-E,C-D,C-B,A-F\}:\\	
& 
	&
	B & D&E& F.
	}
\]}

{
\begin{figure}[htbp]
\begin{center}
\begin{tikzpicture}[scale=.7]
\coordinate (v2) at (1.5,2);
\coordinate (v3) at (3,3);
\coordinate (v4) at (4.5,0);
\coordinate (v5) at (6,1);
\coordinate (v6) at (7.5,2);
\coordinate (v7) at (9,4);
\coordinate (v8) at (10.5,5);
\coordinate (b23) at (2,2.5);
\coordinate (b56) at (5,.5);
\coordinate (b47) at (5.25,2.5);
\coordinate (b78) at (10.1,4.6);
\draw (b23) node[above]{$\beta_{23}$};
\draw (b56) node[above]{$\beta_{45}$};
\draw (b47) node[below]{$-\beta_{36}$};
\draw (b78) node[below]{$\beta_{78}$};
\foreach \x in {v2,v3,v4,v5,v6,v7,v8}
	\draw[fill] (\x) circle[radius=3pt];
\draw[thick] (v2)--(v3)--(v6);
\draw[thick] (v4)--(v5) (v7)--(v8);
\draw[dotted] (v5)--(v6) (v3)--(v7);
\draw (v2)node[left]{2};
\draw (v4)node[left]{4};
\draw (v5)node[right]{5};
\draw (v3)node[above]{3};
\draw (v6)node[right]{6};
\draw (v7)node[below]{7};
\draw (v8)node[right]{8};
\end{tikzpicture}
\color{blue}
\caption{$T\cup \mu_T(S)= C-E,C-D,C-B,A-F$ in binary tree notation (Definition \ref{def: binary tree notation}). We can immediately see that these edges are compatible since they can be completed to a binary tree with edges indicated with dotted lines. (See Definition \ref{def: compatibility for G(V)} and Theorem \ref{thm: maximal compatible sets are rooted binary trees}.)}
\label{Fig: binary tree notation}
\end{center}
\end{figure}
}

{\color{blue}
\begin{defn}\label{def: binary tree notation}
Since this graph does not look like a binary tree, this may be a good moment to introduce \emph{binary tree notation} for the same graph. (See Figure \ref{Fig: binary tree notation}.) The edges $C-B$, $C-E$ are replaced with $\beta_{23}:=v_3-v_2$ and $v_3-v_6=-\beta_{36}$ using the convention that parallel parts, say $C,E$, are replaced by their left-most vertices $v_3,v_6$ and $\beta_{ij}=v_j-v_i$ (so $v_i-v_j=-\beta_{ij}$). The edges $C-D$, $A-F$ are replaced with $\beta_{45}=v_5-v_4$ and $\beta_{78}=v_8-v_7$ using the convention that, when $X$ covers $Y$, we write the edge $X-Y$ as $v_k-v_j$ where $v_j$ is the left-most vertex of $Y$ and $v_k$ is the left-most vertex of $X$ which is to the right of $Y$. Thus, $C-D$ becomes $v_5-v_4$ since $v_5$ is the left-most vertex of $C=\{3,5\}$ to the right of $D=4$. As an application of Theorem \ref{thm: maximal compatible sets are rooted binary trees} below, the resulting graph can be completed to a binary graph (by adding the edges indicated with dotted lines) since its edges are pairwise compatible according to Definition \ref{def: compatibility for G(V)}.
\end{defn}
}

Next, we will verify that composition of morphisms is associative. Let
\[
	\cP\xrarrow{[R]} \cQ\xrarrow{[S]} \cR\xrarrow{[T]} \cS
\]
be three composable cluster morphisms. Then\color{blue}
\[
	[T]\circ([S]\circ[R])=[T]\circ [R\cup \mu_RS]=[R\cup \mu_RS\cup \mu_{R\cup \mu_RS}T]
\]
\[
	([T]\circ[S])\circ[R]=[S\cup \mu_ST]\circ [R]=[R\cup \mu_RS\cup \mu_R\mu_ST]
\]
So, it suffices to show that $\mu_R\circ\mu_S=\mu_{R\cup\mu_RS}$. But, the first map is the composition of two bijections
\[
	E(\cQ,\cP)\xrarrow{\mu_S} E_S(\cR,\cP)\xrarrow{\mu_R} E_{R\cup \mu_RS}(\cS,\cP)
\] whose inverses are given by the projection maps $\ZZ\cS\onto \ZZ\cR\onto \ZZ\cQ$. And, the second map is the bijection $E(\cQ,\cP)\cong E_{R\cup \mu_RS}(\cS,\cP)$ whose inverse is given by the projection map $\ZZ\cS\onto \ZZ\cQ$. So, these two maps agree.

In our example, we computed $X=T\cup \mu_TS$ above. 
\[
	\mu_X:E(\cQ,\cP)=\{U-P\}=R\to E_X(\cS,\cP)=\{F-C\}.
\]
So, $\mu_X(R)=F-C$ and $([T]\circ [S])\circ [R]=[X]\circ[R]=[X\cup \mu_X(R)]$ where
\[
	X\cup \mu_X(R)=\{C-E,C-D,C-B,A-F,F-C\}.
\]

On the other side we have $[S]\circ[R]=[S\cup \mu_S(R)]$. But 
\[
\mu_S:E(\cQ,\cP)=\{U-P\}=R\to E_S(\cR,\cP)=\{F-W\}.
\]
So, $[S]\circ[R]=[Y]$ where
\[
	Y=S\cup \mu_S(R)=\{W-B,A-F,F-W\}.
\]
We have $[T]\circ([S]\circ[R])=[T]\circ [Y]=[T\cup \mu_T(Y)]$ where
\[
	\mu_T:E(\cR,\cP)\to E_T(\cS,\cP)
\]
sends $W-B,A-F,F-W$ to $C-B,A-F,F-C$. Therefore,
\[
	T\cup \mu_T(Y)=\{C-D,C-E,C-B,A-F,F-C\}
\]
which is equal to $X\cup \mu_X(R)$. Therefore, $([T]\circ[S])\circ[R]=[T]\circ([S]\circ [R])$ in this case.

\color{black}Therefore, the definition of the category of noncrossing partitions will be complete when we prove Theorems \ref {thm1: morphisms are maximal compatible sets of edge vectors} and \ref {thm2: bijection ET to E}.

\section{Compatibility of edge sets}

The purpose of this section is to prove Theorem \ref{thm1: morphisms are maximal compatible sets of edge vectors} and Theorem \ref {thm2: bijection ET to E}, the two theorems needed to define composition of cluster morphisms. The same idea is also needed in the proof that the classifying space of the category of noncrossing partitions is locally $CAT(0)$. We will first do the basic case of Theorem \ref{thm1: morphisms are maximal compatible sets of edge vectors} to show that binary trees are maximal compatible sets of edges.

\subsection{Binary trees as maximal compatible sets}

By definition a binary tree $T$ has a unique root, say $v_k$. We will augment this tree with the root vector $\ast-v_k$. If $V=\{v_1,\cdots,v_n\}$ is a finite totally set (the set of vertices), let $V_+=V\cup\{\ast\}$. Define $E(V)$ to be the subset of the free abelian group $\ZZ V\cong \ZZ^n$ consisting of the $n(n-1)$ vectors $v_j-v_i$ for $i\neq j$ which we call \emph{edge vectors}. Let $G(V)\subset \ZZ V_+\cong \ZZ^{n+1}$ be the union of $E(V)$ and the $n$ vectors $\ast-v_k$ which we call \emph{root vectors}. So, $|G(V)|=n^2$. We define an \emph{augmented binary tree} $T_+$ on $V$ to be a subset of $G(V)$ with $n$ elements: 
\begin{enumerate}
\item the edge vectors $v_j-v_i$ of the $n-1$ directed edges $v_i\to v_j$ of the tree and 
\item the (unique) root vector $\ast-v_k$ of the root $v_k$ of the tree. 
\end{enumerate}
As before the \emph{support} of $E=v_i-v_j$ is the closed interval in $\RR$ with endpoints $i,j$. The support of a root vector $\ast-v_k$ is $[k,\infty)$ and the length of the root vector is infinite.

The definition we want is: ``Two elements of $G(V)$ are {compatible} if there exists a binary tree which contains both of them.'' For example, $v_1-v_2$ is compatible with $\ast-v_1$ but not with $\ast-v_2$ since the inclusion of the edge $v_1-v_2$ implies $v_1>v_2$ in the partial ordering given by the tree. {\color{blue}Indeed, $v_1-v_2$ being a directed edge in the tree means $v_1$ is closer to the root than $v_2$.} So, $v_1$ can be a root, but not $v_2$. The theorem we want is: ``Binary trees are the same as maximal compatible subsets of $G(V)$.'' To prove this we need a more precise statement.

\begin{defn}\label{def: compatibility for G(V)}
\emph{Compatibility} of pairs of elements of $G(V)$ is defined by the following conditions.
\begin{enumerate}
\item A root vector $\ast-v_k$ is not compatible with any other root vector.
\item A root vector $\ast-v_k$ and an edge $E=v_j-v_i$ are compatible if and only if either $k=j$ or $k$ lies outside the support of $E$. {\color{blue}I.e., either $k<i,j$ or $i,j<k$. Equivalently, $v_j-v_i$ and $\ast-v_k$ are noncrossing (Condition (3)(a) below).}
\item Given two edges $E_1$ and $E_2$ there are several possibilities:
\begin{enumerate}
	\item (noncrossing condition) If $E_1,E_2$ have distinct endpoint, they are compatible if and only if either their supports are disjoint or the support of one is contained in the interior of the support of the other. 
	\item If the intersection of the supports of $E_1,E_2$ is one point $v_j$ then they are compatible if and only if they do not both point away from $v_j$.
	\item If two edges share one endpoint $v_k$ and the support of one is contained in the support of the other then they are compatible if and only if they have different lengths and the longer edge points away from $v_k$ and the shorter edge points towards $v_j$.
\end{enumerate} 
\end{enumerate}
\end{defn}

{
\begin{figure}[htbp]
\begin{center}
\begin{tikzpicture}
\draw[thick] (-7.5,3) rectangle (7.5,3.5);
\draw (-2,3.25) node{compatible};
\draw (4.75,3.25) node{not compatible};
\draw[thick] (-7.5,0) rectangle (7.5,3);
\draw[thick] (-6,0)--(-6,3) (2,0)--(2,3.5);
\draw (-6.75,1.5) node{(3)(a)};
\begin{scope}[xshift=-5.5cm,yshift=1.5cm]
\draw[very thick,->] (0,0)--(1,1);
\draw[very thick,->] (2.5,0)--(1.5,1);
\draw[very thick,gray] (0,-.5)--(1,-.5) (2.5,-.5)--(1.5,-.5);
\draw[gray] (1.25,-1) node{\tiny disjoint support OK};
\end{scope}
\begin{scope}[xshift=-1.9cm,yshift=1.5cm]
\draw[very thick,->] (0,0.2)--(3,1);
\draw[very thick,->] (1,0)--(2,.25);
\draw[very thick,gray] (0,-.5)--(3,-.5) (1,-.65)--(2,-.65);
\draw[gray] (1.5,-1) node{\tiny nested support OK};
\end{scope}
\begin{scope}[xshift=3.2cm,yshift=1.5cm]
\draw[very thick,->] (0,0)--(2,1);
\draw[very thick,->] (1,0)--(3,.25);
\draw[very thick,gray] (0,-.5)--(2,-.5) (1,-.65)--(3,-.65);
\draw[gray] (1.5,-1) node{\tiny overlapping support not OK};
\end{scope}
\draw[thick] (-7.5,0) rectangle (7.5,-3);
\draw[thick] (-6,0)--(-6,-3) (2,0)--(2,-3);
\draw (-6.75,-1.5) node{(3)(b)};
\begin{scope}[xshift=3.2cm,yshift=-1.5cm]
\draw[very thick,->] (1.5,0.1)--(2.5,1);
\draw[very thick,->] (1.5,0.1)--(.5,1);
\draw[fill] (1.5,0.1) circle[radius=2pt];
\draw (1.5,-.2) node{$v_j$};
\draw[gray] (1.5,-.7) node{\tiny a vertex cannot have};
\draw[gray] (1.5,-1.1) node{\tiny two parents};
\end{scope}
\begin{scope}[xshift=-1.5cm,yshift=-1.5cm]
\draw[very thick,->] (0,0)--(1,.6);
\draw[very thick,->] (1.1,.6)--(2,1.2);
\draw[fill] (1.1,0.6) circle[radius=2pt];
\draw (1,.3) node[right]{$v_j$};
\draw[gray] (-.8,-1) node{\tiny these are OK};
\end{scope}
\begin{scope}[xshift=-5.5cm,yshift=-1.7cm]
\draw[very thick,->] (0.3,-.1)--(1.17,.8);
\draw[very thick,->] (2.3,-.1)--(1.34,.8);
\draw[fill] (1.25,0.85) circle[radius=2pt];
\draw (1.25,0.85) node[above]{$v_j$};
\end{scope}
\draw[thick] (-7.5,-6) rectangle (7.5,-3);
\draw[thick] (-6,-6)--(-6,-3) (-2,-6)--(-2,-3);
\draw (-6.75,-4.5) node{(3)(c)};
\begin{scope}[xshift=-5cm,yshift=-4.2cm]
\draw[very thick,->] (0,0)--(2,.8);
\draw[very thick,<-] (0.1,-.1)--(1,-.5);
\draw[fill] (0,0) circle[radius=2pt];
\draw (0,0) node[left]{$v_k$};
\draw[gray] (1,-1) node{\tiny OK: child is closer to $v_k$};
\draw[gray] (1,-1.3) node{\tiny than parent};
\end{scope}
\begin{scope}[xshift=-1cm,yshift=-4.2cm]
\draw[very thick,->] (0,0)--(1,.8);
\draw[very thick,<-] (0.1,-.1)--(2,-.5);
\draw[fill] (0,0) circle[radius=2pt];
\draw (0,0) node[left]{$v_k$};
\end{scope}
\begin{scope}[xshift=2cm,yshift=-4.4cm]
\draw[very thick,->] (0,0)--(2,.6);
\draw[very thick,->] (0,0)--(1,.7);
\draw[fill] (0,0) circle[radius=2pt];
\draw (0,0) node[left]{$v_k$};
\draw[gray] (1,-1) node{\tiny these are not OK};
\end{scope}
\begin{scope}[xshift=5cm,yshift=-4.2cm]
\draw[very thick,<-] (0.1,0)--(2,-.5);
\draw[very thick,<-] (0.1,-.1)--(1,-.8);
\draw[fill] (0,0) circle[radius=2pt];
\draw (0,0) node[left]{$v_k$};
\end{scope}
\end{tikzpicture}
\color{blue}
\caption{Compatibility chart showing pairwise compatibility of edges in augmented binary trees}
\label{Figure99}
\end{center}
\end{figure}
}

Conditions (3b), (3c) can be combined into one condition as follows.\vs2
{\color{blue}
\begin{minipage}{0.6\textwidth}
\begin{enumerate}
\item[(3bc)] Every vertex $v_j$ has at most one left child ($v_i$), at most one right child ($v_k$) and at most one parent ($v_\ell$). Furthermore a child on the same side as a parent must be strictly closer to $v_j$ than the parent.
\end{enumerate}
\end{minipage}
\hfill 
\begin{minipage}{0.35\textwidth}
\begin{center}
\begin{tikzpicture}
\draw[very thick,->] (0,0.1)--(2,.6);
\draw[very thick,<-] (0.1,0)--(1,-.6);
\draw[very thick,<-] (-.1,0)--(-1,-.6);
\draw[fill] (0,.1) circle[radius=2pt];
\draw[fill] (-1,-.6) circle[radius=2pt];
\draw[fill] (1,-.6) circle[radius=2pt];
\draw[fill] (2.1,.6) circle[radius=2pt];
\draw (0,.1) node[above]{$v_j$};
\draw (-1,-.6) node[left]{$v_i$};
\draw (1,-.6) node[right]{$v_k$};
\draw (2.1,.6) node[right]{$v_\ell$};
\end{tikzpicture}
\end{center}
\end{minipage}%
}\vs2

It follows from the definition of a binary tree that the edges and root vector of a binary tree on $V$ form a pairwise compatible subset of $G(V)$. We will prove the converse:

\begin{thm}\label{thm: maximal compatible sets are rooted binary trees}
Any maximal pairwise compatible subset of $G(V)$ has $n$ elements and consists of the edges and root vector of a binary tree on $V$.
\end{thm}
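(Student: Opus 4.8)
The plan is to prove both halves simultaneously by induction on $n=|V|$. The base case $n=1$ is trivial: $G(V)$ consists of the single root vector $\ast-v_1$, which is both the unique maximal compatible set and the unique binary tree. For the inductive step, let $M$ be a maximal pairwise compatible subset of $G(V)$. The first key observation is that $M$ contains exactly one root vector. It cannot contain two by condition (1); and if it contained none, I would show that some root vector $\ast-v_k$ can be added without violating compatibility, contradicting maximality — specifically, one should be able to take $v_k$ to be a vertex that is ``maximal'' with respect to the partial preorder induced by the edges in $M$ (an edge $v_j-v_i$ forces $v_i < v_j$), using condition (2) to check compatibility of $\ast-v_k$ with each edge of $M$. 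So write the unique root vector as $\ast-v_k$.

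Next I would split $V$ at $v_k$ into $V_L=\{v_1,\dots,v_{k-1}\}$ and $V_R=\{v_{k+1},\dots,v_n\}$. The goal is to show that every edge vector in $M$ has support entirely inside $[1,k-1]$ or entirely inside $[k+1,n]$ — i.e. no edge ``crosses'' $v_k$ or touches $v_k$. An edge with $v_k$ as an endpoint, say $v_j-v_k$, forces $v_k<v_j$, contradicting maximality of $v_k$; and an edge $v_j-v_k$ pointing away from $v_k$ shares the endpoint $v_k$ with the root vector $\ast-v_k$, and one checks via (2) that these are incompatible unless... — here I must be careful, since (2) says $\ast-v_k$ and $v_j-v_i$ are compatible iff $k=j$ or $k\notin\operatorname{supp}(E)$, so any edge with $k$ strictly inside its support, or with $k$ as its tail, is incompatible with $\ast-v_k$ and hence not in $M$. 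The only surviving possibility is an edge with $v_k$ as its \emph{head}, $v_k - v_i$; such an edge is allowed by (2), but then condition (3c)/(3bc) applied to $v_k$ (which already has $\ast$ as parent) forces its two endpoints to behave like children, and I claim at most one such edge on each side survives — these become the edges from the roots of the left and right subtrees up to $v_k$. After peeling those (at most two) edges off, the remaining edges of $M$ partition according to whether their support lies in $[1,k-1]$ or $[k+1,n]$: cross-edges are killed by the noncrossing condition (3a) combined with the fact that $v_k$ separates the two intervals.

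Now restrict. Let $M_L$ be the edges of $M$ with support in $[1,k-1]$ together with the forced root vector $\ast_L - r_L$ where $r_L$ is the head of the unique peeled edge $v_k - r_L$ on the left (if $k>1$), and similarly $M_R$. I would verify that $M_L$ is a maximal pairwise compatible subset of $G(V_L)$: pairwise compatibility is inherited, and maximality follows because any new compatible edge on the left could be adjoined back into $M$ (its support lies in $[1,k-1]$, so it is automatically disjoint-or-nested with every right-hand edge and with $v_k$, hence compatible with everything in $M\setminus M_L$), contradicting maximality of $M$. By the inductive hypothesis $M_L$ and $M_R$ are augmented binary trees on $V_L$ and $V_R$, with $|M_L|=k-1$ and $|M_R|=n-k$. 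Reassembling: $M = M_L' \cup M_R' \cup \{\ast-v_k\} \cup \{\text{at most two edges } v_k-r_L,\, v_k-r_R\}$, where $M_L', M_R'$ are the edge parts of the subtrees. Counting gives $|M| = (k-1) + (n-k) + 1 = n$. Finally I must check that the reassembled object is a binary tree on $V$: properties (1)–(3) of the binary-tree definition follow from the corresponding properties of the two subtrees plus the observation that $v_k$ has exactly the children $r_L$ (on the left, since $r_L < k$) and $r_R$ (on the right), with $k$ lying between them, and all edges lie to one side of the root by construction.

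The main obstacle I anticipate is the bookkeeping around the vertex $v_k$ and its two ``upward'' edges: showing precisely that $M$ contains at most one edge of the form $v_k - v_i$ with $i<k$ and at most one with $i>k$, and that these must point from the \emph{roots} of the left and right subtrees (not from arbitrary vertices). This is exactly where condition (3c)/(3bc) — ``a child on the same side as a parent must be strictly closer'' — does its work, and it interacts delicately with the induction: one has to know that within $[1,k-1]$ the edge $v_k - r_L$ is compatible with an edge of $M_L'$ precisely when $r_L$ is the root of that subtree. I would handle this by noting that $v_k-r_L$ has support $[r_L,k]\supsetneq$ the support of every left-hand edge (since all of $V_L$ lies in $[1,k-1]$), so by (3c) compatibility with a left edge sharing endpoint $r_L$ forces that edge to point \emph{toward} $r_L$ and be shorter — i.e. $r_L$ has no parent within $V_L$, so $r_L$ is the root of $M_L$. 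Everything else is routine verification of the defining conditions.
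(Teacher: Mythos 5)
Your overall strategy is the same as the paper's (locate a root vector, split $V$ at the root vertex $v_k$, induct on the two sides, reassemble), but there is a genuine gap at the one step that carries the real content: the existence of a root vector compatible with a compatible set of edges. You propose to add $\ast-v_k$ where $v_k$ is ``a vertex maximal with respect to the preorder induced by the edges of $M$,'' but that is not sufficient: a preorder-maximal vertex can lie strictly inside the support of an edge of $M$, and then condition (2) makes $\ast-v_k$ incompatible with that edge. For example, with $V=\{v_1,v_2,v_3\}$ and $M\supseteq\{v_3-v_1\}$, the isolated vertex $v_2$ is maximal in the preorder, yet $\ast-v_2$ is incompatible with $v_3-v_1$. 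So one must prove that \emph{some} vertex is simultaneously maximal and not interior to the support of any edge of $M$; this is exactly the paper's Lemma 2.5, whose proof takes an edge of greatest length, follows successive right parents to a terminal vertex $v_{j_m}$, and verifies both properties (maximality and non-interiority) using the noncrossing condition and the ``no two parents / child closer than parent'' conditions. As written, your argument assumes this lemma without proving it, and your hint would not yield it.

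A secondary, related gap: your claim that $M_L$ is a \emph{maximal} compatible subset of $G(V_L)$ is not justified by ``any new compatible edge on the left could be adjoined back into $M$,'' because a new element of $G(V_L)$ may be a root vector $\ast_L-v_j$, which is not an element of $G(V)$ at all; you must translate it to $v_k-v_j$ and check that this translation preserves compatibility (this is the correspondence $\sigma_R$ of Corollary 2.6), and you must also rule out the case $k>1$ with no peeled left edge, which again requires the root-vector lemma applied to $M\cap E(V_L)$. The paper sidesteps this entirely: rather than proving the restrictions are maximal, it chooses \emph{any} compatible root vector on each side (child of $v_k$ if one exists), applies induction to get augmented binary trees containing the restrictions, reassembles these into a binary tree containing $S$, and invokes maximality of $S$ once at the end. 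If you repair the root-vector lemma and either adopt the paper's ``build a tree containing $M$'' closing move or carry out the $\sigma_R$ translation carefully, your induction goes through.
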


Some immediate consequences of this theorem are the following.

\begin{cor}
\begin{enumerate}
\item Every compatible pair of elements of $G(V)$ lie in an augmented binary tree on $V$.
\item The root vector and edges of any binary tree on $V$ form a maximal compatible subset of $G(V)$.
\item Any maximal compatible subset of $E(V)$ has $n-1$ elements and consists of the edges of a binary tree on $V$.
\end{enumerate}
\end{cor}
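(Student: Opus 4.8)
The plan is to prove the theorem by induction on $n=|V|$, splitting a maximal compatible set at its root. The base case $n=1$ is immediate: $E(V)=\emptyset$, so $G(V)=\{\ast-v_1\}$ is itself the augmented edge set of the one-vertex binary tree, and it is the unique (maximal) compatible subset. So let $M\subseteq G(V)$ be a maximal pairwise compatible set with $n\ge 2$. The first, and I expect the main, task is to show that $M$ contains exactly one root vector. Uniqueness is immediate from condition (1). For existence, I would argue that a maximal $M$ cannot consist of edge vectors alone: the edges of $M$ form a noncrossing configuration on $\{1,\dots,n\}$ (condition (3a)) in which every vertex has at most one parent edge (condition (3bc)). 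An elementary analysis of such a configuration produces a vertex $v_r$ that is the child-endpoint of no edge of $M$ and lies strictly inside the support of no edge of $M$; by condition (2), $\ast-v_r$ is then compatible with every element of $M$, so maximality forces $\ast-v_r\in M$. A byproduct of the same analysis is that following parent edges never cycles, so the edges of $M$ are a forest directed toward their parents.

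Once $\ast-v_r\in M$ is in hand, write $V_L=\{v_1,\dots,v_{r-1}\}$ and $V_R=\{v_{r+1},\dots,v_n\}$. By condition (2), every non-root element of $M$ is an edge that either has $v_r$ as parent-endpoint (a child edge of $v_r$, of which (3bc) allows at most one on each side) or has support contained entirely in $V_L$ or entirely in $V_R$. I would then show that $v_r$ has a left child exactly when $V_L\neq\emptyset$ (otherwise a suitable edge from $v_r$ into $V_L$ could be adjoined, contradicting maximality), and define $M_L$ to be the edges of $M$ with support in $V_L$ together with the root vector $\ast-v_\ell$ of the left child $v_\ell$ (taking $M_L=\emptyset$ if $V_L=\emptyset$); define $M_R$ symmetrically. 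The heart of the step is the verification, using the noncrossing condition (3a), that $M_L$ is a \emph{maximal} pairwise compatible subset of $G(V_L)$ and $M_R$ of $G(V_R)$: compatibility is inherited, and any hypothetical enlargement of $M_L$ inside $G(V_L)$ would, transported back, still be compatible with all of $M$ — automatically with $\ast-v_r$ and with the edges on the other side, and, by the case analysis of conditions (2), (3b), (3c) against the edge $v_r-v_\ell$, with the left child edge as well — contradicting maximality of $M$.

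By the induction hypothesis $M_L$ is the augmented edge set of a binary tree on $V_L$ and $M_R$ of one on $V_R$. Grafting these trees onto $v_r$ along the child edges $v_r-v_\ell$, $v_r-v_{\ell'}$ (which replace the root vectors $\ast-v_\ell$, $\ast-v_{\ell'}$) and attaching $\ast-v_r$ reconstructs $M$ and gives $|M|=(|V_L|-1)+(|V_R|-1)+3=n$. It remains to check that the resulting rooted tree $T$ on $V$ is a binary tree, i.e.\ satisfies conditions (1)--(3) of the definition. Condition (1) follows from (3a) together with (3c): an upward path from a vertex strictly under an arc $v_j-v_i$ cannot leave the interval $[i,j]$ except through $v_i$, since any other exit would create a crossing pair or a forbidden nested pair sharing $v_i$ or $v_j$; hence such a vertex is a strict descendant of $v_i$. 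Conditions (2) and (3) are precisely (3bc): at most one left child and one right child, so at most two children with the parent between them.

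I expect the real obstacle to be the existence-of-root-vector step — equivalently, the statement that a maximal compatible set attains the full $n$ elements rather than getting stuck smaller — which needs the genuine (if elementary) combinatorics of noncrossing edge configurations together with the no-cycles claim. The second most delicate point is the transport-of-maximality in the splitting step: because the role of a side's root is played by a child $v_\ell$ of $v_r$ rather than by a root vector, one must verify carefully that the extra edge $v_r-v_\ell$ never obstructs an extension that the root vector $\ast-v_\ell$ would permit, which is where conditions (3b) and (3c) do the work.
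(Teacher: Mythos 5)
There is a genuine gap: your proposal proves (in outline) a different statement. The statement at issue is the corollary, whose three parts the paper deduces in a few lines from the preceding theorem that every maximal pairwise compatible subset of $G(V)$ has $n$ elements and consists of the edges and root vector of a binary tree. Your induction on $|V|$ --- locate a root vector in a maximal compatible set, split at the root into $V_L$ and $V_R$, apply induction, graft --- is essentially the paper's own proof of that theorem, not of the corollary; and even within it the decisive step (existence of a root vector compatible with a given compatible set of edges, which the paper proves in a separate lemma by taking an edge of greatest length and following right parents) is only asserted as ``an elementary analysis produces a vertex $v_r$.'' You correctly identify this as the main obstacle but do not supply the argument.

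Moreover, the corollary's three parts do not all fall out of your induction without further work, and none of the needed deductions appear in your write-up. Part (1) is immediate (a compatible pair lies in some maximal compatible set, which by the theorem is an augmented binary tree). But part (2) requires the converse fact that the root vector and edges of an \emph{arbitrary} binary tree on $V$ are pairwise compatible --- your argument only shows that maximal compatible sets turn out to be trees, not that every augmented binary tree is a compatible set --- together with the observation that an $n$-element compatible set contained in a maximal compatible set (which also has $n$ elements by the theorem) must equal it. Part (3) requires yet another step: extend a maximal compatible $T\subseteq E(V)$ to a maximal compatible subset of $G(V)$, note that this is an augmented binary tree containing exactly one root vector, and argue that deleting that root vector leaves a compatible subset of $E(V)$ containing $T$, hence equal to $T$ by maximality; this is what gives $|T|=n-1$ and identifies $T$ as the edge set of a binary tree. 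To repair the proposal you should either cite the theorem (already proved in the paper) and carry out these three short deductions, or, if you insist on a self-contained treatment, complete the root-vector lemma and then still add the deductions above.
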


\begin{proof}
(1) Any compatible set is contained in a maximal compatible set which forms an augmented binary tree by the theorem.

(2) The parts of an augmented binary tree are pairwise compatible. So, they form a subset of a maximal pairwise compatible set which, by the theorem, must be the set we started with.

(3) Any maximal compatible subset $T$ of $E(V)$ is contained in a maximal compatible subset of $G(V)$ which contains one root vector. Removing the root vector must give $T$.
\end{proof}

We prove Theorem \ref{thm: maximal compatible sets are rooted binary trees} by induction on the size of $V$ using the root.
{\color{blue}Given any set of directed edges $S\subset E(V)$, let $\Gamma(S)$ be the directed graph with vertex set $V$ and edge set $S$.}

\begin{lem}\label{lem: Gamma(S)}
\color{blue}(a) Given any pairwise compatible subset $S$ of $E(V)$, every component of $\Gamma(S)$ has a unique root (a vertex without a parent).

(b) Every component of $\Gamma(S)$ is a tree.

(c) Let $v_k$ be the root of the component of $\Gamma(S)$ which contains the left-most vertex $v_1$. Then the root vector $\ast-v_k$ is compatible with $S$. Equivalently, the vertical line though $v_k$ does not cross any edge in $S$.
\end{lem}

{\color{blue} 
\begin{proof} 
(a) Let $\Gamma_0$ be any component of $\Gamma(S)$. The uniqueness of a root, if it exists, follows from Condition (3)(b): If there were two roots in the same component, there would be path joining them. Since the edges of that path begin by pointing left and end by pointing right, there must be a vertex at which this switches. This vertex would then have two parents which contradicts (3)(b). Thus, only existence of the root is in question. 

Let $v_i$ be the left-most vertex in $\Gamma_0$. So, $v_i$ has no left child and no left parent. If $v_i$ has no right parent, it is a root. So, we may assume $v_i$ has a right parent, say $v_j$. The vertex $v_j$ cannot have a left parent since any left parent $v_p$ would have $i<p<j$ (since $i$ is minimal) and this contradicts (3)(c). If $v_j$ has no right parent, we are done. So, suppose $v_j$ has a right parent $v_p$. Then $v_p$ cannot have a left parent $v_q$ since $i<q<p$ forces $j<q$ since otherwise $[i,j]$ and $[q,p]$ would cross, contradicting (3)(a). But then $j<q<p$ gives a contradiction to (3)(c). Continuing in this way we reach vertex $v_k$ with no right parent and no left parent. So, $v_k$ is a root of $\Gamma_0$.

(b) Every component of $\Gamma(S)$ must be a tree. Otherwise, it would have a cycle. By part (a) that cycle must have a root. But then, it must also have a minimal element which has two parents, which is not allowed. So, $\Gamma(S)$ has no cycles. So, it is a tree.

(c) Let $v_k$ be the root of the component of $\Gamma(S)$ containing the first vertex $v_1$. Referring to the proof of (a) above, we know that $v_k$ is obtained from $v_1$ by taking a sequence of right parents $v_1=v_{j_0},v_{j_1},v_{j_2},\cdots,v_{j_m}=v_k$ where each $v_{j_p}$ is the right parent of $v_{j_{p-1}}$. If any other edge $E$ in $S$ crosses the vertical line at $v_k$, then the endpoints of $E$ must be $v_i,v_j$ where $i<k<j$. But $v_i$ cannot be in the sequence of right parents since that would contradict (3)(c). Since $1<i<k$, $v_i$ must fall between two consecutive right parents $v_{j_p}$ and $v_{j_{p+1}}$. Then $[i,j]$ would cross the support $[j_p,j_{p+1}]$ of this edge contradicting (3)(a). So, the vertical line through $v_k$ does not cross any of the edges in $S$ making the root vector $\ast-v_k$ compatible with $S$.
\end{proof}}

{
\begin{figure}[htbp]
\begin{center}
\begin{tikzpicture}
\draw[gray] (1,3)--(1,0);
\draw[gray] (3,2)--(3,0);
\draw[gray, thick,->] (-1,2)--(.9,3);
\draw[fill] (-1,2) circle[radius=2pt];
\draw (-1,2) node[left]{$v_1$};
\draw[fill] (0,1) circle[radius=2pt];
\draw[very thick,->] (0,1)--(-.9,1.9);
\draw (-.5,0) node{$\Gamma_L$};
\draw[gray, thick,->] (3,2)--(1.1,3);
\draw[fill,gray] (1,3) circle[radius=2pt];
\draw (1,3) node[above]{$v_k$};
\draw (4.5,3) node{$\Gamma_R$};
\draw[very thick,->] (2,1)--(2.9,2);
\draw[fill] (3,2) circle[radius=2pt];
\draw (3,2) node[above]{$v_j$};
\draw[fill] (2,1) circle[radius=2pt];
\draw (2,1) node[below]{$v_{k+1}$};
\draw[fill] (5,1) circle[radius=2pt];
\draw (5,.8) node[below]{$v_i$};
\draw[fill] (7,1) circle[radius=2pt];
\draw (7,1) node[below]{$v_{n}$};
\draw[very thick,->] (7,1)--(3.1,2);
\draw[very thick,->] (4,0)--(4.9,1);
\draw[very thick,->] (6,0)--(5.1,1);
\end{tikzpicture}
\color{blue}
\caption{If $\Gamma(S)$ is not connected, we remove the root $v_k$ to get two subgraphs $\Gamma_L$ and $\Gamma_R$ one of which will not be connected, contradicting the induction hypothesis.}
\label{Fig: If Gamma(S) is not connected}
\end{center}
\end{figure}
}

{\color{blue}
\begin{eg}\label{eg: when Gamma(S) is not connected} 
When $S\subset E(V)$ is a maximal pairwise compatible set, we want to show that $\Gamma(S)$ is connected. Figure \ref{Fig: If Gamma(S) is not connected} shows an example of what happens when $\Gamma(S)$ is not connected. 
By Lemma \ref{lem: Gamma(S)}(c) there is at least one root $v_k$ of some component of $\Gamma(S)$ having the property that $\ast-v_k$ is compatible with $S$. If we remove the vertex $v_k$ and the edges ending in $v_k$ (shaded gray in Figure \ref{Fig: If Gamma(S) is not connected}) we will get a left graph $\Gamma_L$ with vertex set $L=\{v_1,v_2,\cdots,v_{k-1}\}$ and a right graph $\Gamma_R$ with vertex set $R=\{v_{k+1},\cdots,v_n\}$. Let $v_j$ be the root of the component of $\Gamma_R$ containing $v_{k+1}$. By Lemma \ref{lem: Gamma(S)}(c), none of the edges of $\Gamma_R$ can pass through the vertical line at $v_j$. So, the edge $v_k-v_j$ is compatible with $\Gamma_R$. By maximality of $S$, we must have $v_k-v_j\in S$, as drawn. If $\Gamma_R$ is not connected, then, by induction on the size of $S$, we could add compatible edges in $E(R)$ to make $\Gamma_R$ connected. These compatible edges cannot cross the vertical line at $v_j$. So, they will be compatible with the edge $v_k-v_j$. So, they will be compatible with every element of $S$ contradicting its maximality.
\end{eg}
}

\begin{proof}[Proof of Theorem \ref{thm: maximal compatible sets are rooted binary trees}] (This argument is also the beginning of the proof of Theorem \ref {thm1: morphisms are maximal compatible sets of edge vectors}.)
{\color{blue}
Let $S$ be a maximal compatible subset of $G(V)$ and let $S_0=S\cap E(V)$. If $S=S_0$ then, by Lemma \ref{lem: Gamma(S)} (c), we have a root $v_k$ of $S_0$ so that $\ast-v_k$ is compatible with $S_0$. By maximality of $S$, we must have $S=S_0\cup \{\ast-v_k\}$.

We now claim that the graph $\Gamma(S_0)$ is connected with unique root $v_k$. If not then, by removing $v_k$ and the edge ending in $v_k$ we would have two subgraphs $\Gamma_L$ and $\Gamma_R$ as in Example \ref{eg: when Gamma(S) is not connected} and both subgraphs have to be connected (or empty) by maximality of $S$. See the argument given in Example \ref{eg: when Gamma(S) is not connected}. Therefore, $\Gamma(S_0)$ is a rooted binary tree. So, $S_0$ has $n-1$ elements and $S$ has $n$ elements.}
\end{proof}

Since we now know that maximal compatible subsets of $G(V)$ are augmented binary tree, the argument in this proof can be rephrased as follows. For any $X\in G(V)$ we use the notation $G_X(V)$ for the set of all elements of $G(V)$ which are compatible with $X$ {\color{blue}but not equal to $X$.}

\begin{cor}\label{cor: parts compatible with a root vector}
Let $R=\ast-v_k$ be a root vector in $G(V)$ where $V=\{v_1,\cdots,v_n\}$.
\begin{enumerate}
\item If $k=1$ or $n$ then there is a bijection
\[
	\widetilde\mu_R: G(V\backslash \{v_k\})\to G_R(V)
\]
given by $\widetilde\mu_R(E)=E$ for all $E\in E(V\backslash R)$ and $\widetilde\mu_R(\ast-v_j)=v_k-v_j$. Furthermore, $X,Y\in G(V\backslash \{v_k\})$ are compatible if and only if $\widetilde\mu_R(X),\mu_R(Y)$ are compatible in $G_R(V)$. When $k=n=1$, $\widetilde\mu_R$ is a bijection between two empty sets.
\item If $1<k<n$ then there is a bijection
\[
	\widetilde\mu_R:G(\{v_1,\cdots,v_{k-1}\})\smallcoprod G(\{v_{k+1},\cdots,v_n\})\to G_R(V)
\]
given by $\widetilde\mu_R(E)=E$ for all edges $E$ and $\widetilde\mu_R(\ast-v_j)=v_k-v_j$ for all root vectors $\ast-v_j$. Furthermore, $\widetilde\mu_R(X),\widetilde\mu_R(Y)\in G_R(V)$ are compatible if and only if {\color{blue}either $X,Y$ are compatible elements of the same block or} $X,Y$ lie in different blocks
\end{enumerate}
\end{cor}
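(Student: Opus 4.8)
The plan is to read the corollary off from the proof of Theorem~\ref{thm: maximal compatible sets are rooted binary trees}, together with the observation recorded immediately after it: for any finite totally ordered $W$, a pair of elements of $G(W)$ is compatible precisely when it lies in a common augmented binary tree on $W$. Concretely I would proceed in three stages: first check that $\sigma_R$ is a well-defined injection into $G_R(V)$; then identify $G_R(V)$ explicitly and deduce surjectivity; finally transport the compatibility relation across $\sigma_R$ by matching augmented binary trees.

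For the first two stages, note that $\sigma_R$ carries an edge $E$ of $V\backslash\{v_k\}$ to itself, and the support of such an $E$ does not contain the index $k$ (in Case~1 because $v_k$ is an extreme vertex, in Case~2 because the two blocks lie on opposite sides of $v_k$), so $E$ is compatible with $R=\ast-v_k$ by compatibility condition~(2); and $\sigma_R$ carries a root vector $\ast-v_j$ to the edge $v_k-v_j$, whose head is $v_k$, again compatible with $R$ by~(2). All images are edges while $R$ is a root vector, so the image lies in $G_R(V)$, and injectivity is clear from the formulas. Conversely, any element of $G(V)$ compatible with $R$ and different from $R$ is not a root vector (condition~(1)), hence an edge $E=v_b-v_a$; by~(2) either $b=k$, so $E=\sigma_R(\ast-v_a)$, or $k\notin\mathrm{supp}(E)$, which forces $a$ and $b$ to lie on the same side of $k$, so $E$ is an edge of $V\backslash\{v_k\}$ in Case~1 and an edge of one of the two blocks in Case~2, i.e. $E=\sigma_R(E)$. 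This gives surjectivity; the counts $|G(V\backslash\{v_k\})|=(n-1)^2$ in Case~1 and $|G(\{v_1,\dots,v_{k-1}\})|+|G(\{v_{k+1},\dots,v_n\})|=(k-1)^2+(n-k)^2$ in Case~2 match $|G_R(V)|$ and re-confirm bijectivity. When $k=n=1$ both sides are empty.

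For the compatibility statement, take Case~1 with $k=1$ (the case $k=n$ is the mirror image). Because $v_1$ is leftmost, every binary tree on $V$ rooted at $v_1$ has $v_1$ with a single child $v_j$, and deleting $v_1$ leaves a binary tree on $V'=V\backslash\{v_1\}$ rooted at $v_j$; adjoining $v_1$ over the root inverts this, and on augmented edge sets the operation is exactly $T'_+\mapsto\sigma_R(T'_+)\cup\{R\}$, a bijection between augmented binary trees on $V'$ and augmented binary trees on $V$ containing $R$. Hence $X,Y\in G(V')$ lie in a common augmented binary tree on $V'$ iff $\sigma_R(X),\sigma_R(Y)$ do: for one direction take the image tree; for the other, since $\sigma_R(X),\sigma_R(Y)\in G_R(V)$ the three elements $\sigma_R(X),\sigma_R(Y),R$ are pairwise compatible and so (Theorem~\ref{thm: maximal compatible sets are rooted binary trees}) lie in a common augmented binary tree, which therefore contains $R$, hence has the form $\sigma_R(T'_+)\cup\{R\}$, and removing $R$ and applying $\sigma_R^{-1}$ returns a tree through $X,Y$. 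Case~2 is identical except that removing the root $v_k$ of a binary tree on $V$ with $1<k<n$ splits it into a binary tree on $\{v_1,\dots,v_{k-1}\}$ and one on $\{v_{k+1},\dots,v_n\}$ with $v_k$ the parent of each root, so $\sigma_R$ realizes the corresponding join; a left-block element and a right-block element always become compatible because their $\sigma_R$-images either have disjoint supports or meet only in the vertex $v_k$ with both edges pointing toward $v_k$, so condition~(3) (in fact (3b)) makes them compatible, while within a block compatibility is unchanged. This yields the stated equivalence.

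The main obstacle is the bookkeeping in the last stage: one must verify carefully that $\sigma_R$ genuinely implements the tree-splitting and tree-joining operations on augmented edge sets, including the mixed edge/root-vector pairs, and that joining two trees under $v_k$ (resp. adjoining $v_1$) produces a set satisfying all three defining conditions of a binary tree. A completely elementary alternative, avoiding binary trees, is to prove the compatibility equivalence directly by running through the finitely many configurations in the definition of compatibility, comparing the interval between $k$ and $j$ (the support of $v_k-v_j$) with the supports of the edges of $V\backslash\{v_k\}$; this is routine but tedious, and is the step I would allot the most space to.
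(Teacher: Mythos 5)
Your proof is correct and follows essentially the same route as the paper: the paper offers no separate argument but presents this corollary as a rephrasing of Cases 1 and 2 in the proof of Theorem \ref{thm: maximal compatible sets are rooted binary trees}, i.e.\ the correspondence between augmented binary trees on $V$ rooted at $v_k$ and augmented binary trees on $V\backslash\{v_k\}$ (resp.\ on the two blocks), which is exactly the correspondence you use, supplemented by reading $G_R(V)$ off from the compatibility conditions. Your explicit verifications (surjectivity via condition (2), cross-block compatibility via (3a)/(3b)) just spell out what the paper leaves implicit.
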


This can be rephrased as follows. \vs2

\emph{$\widetilde\mu_R(X)$ is the unique element of $G_R(V)$ congruent to $X$ modulo $R=\ast-v_k$.}

\subsection{Proof of Theorem \ref{thm1: morphisms are maximal compatible sets of edge vectors}}

Let $\cR,\cS$ be noncrossing partitions of $V$ and suppose that $\cS$ is a refinement of $\cR$. {\color{blue}We take $\cR$, $\cS$ as in Example \ref{eg: composition of morpisms} as a simple example.} Then $E(\cS,\cR)$ is a disjoint union of \emph{blocks} $\cB(S_{\alpha\beta})$ defined as follows.

Let $\pi:\cS\to\cR$ be the mapping which sends each part of $\cS$ to the unique part of $\cR$ which contains it. For each $W_\alpha\in \cR$ let $\cU_\alpha=\pi^{-1}(W_\alpha)\subseteq \cS$. Then $\cU_\alpha$ is a noncrossing partition of $W_\alpha\subseteq V$. So, $\cU_\alpha$ is a disjoint union of parallel sets $S_{\alpha\beta}$ of which one, say $S_{\alpha0}$, is maximal. Every other $S_{\alpha\beta}$ is covered by some part $C_{\alpha\beta}\in \cU_\alpha$. {\color{blue}In the example, we have $W_\alpha=W$ and $\cU_\alpha=\{C,D,E\}$ which has two parallel sets: the maximal one $S_{\alpha 0}=\{C,E\}$ and $S_{\alpha1}=\{D\}$. This second one is covered by $C_{\alpha1}=C$.}

\begin{defn}
We define the \emph{maximal block} associated to the maximal parallel set $S_{\alpha0}$ to be the set $\cB(S_{\alpha0})=E(S_{\alpha0})\subseteq E(\cS,\cR)$. For $\beta\neq0$, define the \emph{block} $\cB(S_{\alpha\beta})$ associated to the parallel set $S_{\alpha\beta}$ covered by $C_{\alpha\beta}$ to be the image of the embedding 
\[
	\psi:G(S_{\alpha\beta})\into E(\cS,\cR)
\]
which sends each root vector $\ast-X$ to the edge $C_{\alpha\beta}-X$ and which is the inclusion map on $E(S_{\alpha\beta})$.
\end{defn}

{\color{blue}
In the example, the blocks are $\cB(S_{\alpha0})=E(\{C,E\})=\{C-E,E-C\}$ and $\cB(D)=\{C-D\}$ since $\psi(\ast-D)=C_{\alpha1}-D=C-D$.}

{\begin{lem}
{\color{blue}(a)} $E(\cS,\cR)$ is a disjoint union of blocks. 

{\color{blue}(b)} Elements of different blocks are always compatible. 

{\color{blue}(c)} Two elements of the same block are compatible if and only if the corresponding elements of $G(S_{\alpha\beta})$ (resp. $E(S_{\alpha0})$) are compatible as pieces of augmented (resp. unaugmented) binary trees on the  parallel set.
\end{lem}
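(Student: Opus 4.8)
The claim has three parts: (i) $E(\cS,\cR)$ is the disjoint union of the blocks $\cB_{\alpha\beta}$; (ii) edges from different blocks are always compatible; (iii) two edges in the same block are compatible exactly when the corresponding elements of $G(S_{\alpha\beta})$ (or $E(S_{\alpha0})$) are compatible in the sense of the earlier binary-tree definition. The plan is to reduce everything to a \emph{part-by-part} statement over $\cR$, and within each part $W_\alpha$ to a \emph{parallel-set-by-parallel-set} statement, at which point one invokes Theorem \ref{thm: maximal compatible sets are rooted binary trees} and Corollary \ref{cor: parts compatible with a root vector}.

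First I would establish the set-theoretic decomposition (i). By definition $E(\cS,\cR)=\coprod_{W\in\cR}E(\pi^{-1}(W))$, so it suffices to decompose each $E(\cU_\alpha)=E(\pi^{-1}(W_\alpha))$. By the Proposition characterizing adjacency, an edge $A-B$ of $E(\cU_\alpha)$ is of one of two types: $A,B$ parallel in $\cU_\alpha$, or one covers the other. An edge of the first type lies in exactly one $E(S_{\alpha\beta})$, namely where $S_{\alpha\beta}$ is the parallel set containing both $A$ and $B$. An edge $C-B$ of the second type, with $C$ covering $B$, is accounted for exactly once in the definition of $\cB_{\alpha\beta}$: $B$ lies in a unique non-maximal parallel set $S_{\alpha\beta}$ (every non-maximal part lies in a unique parallel set, and two parts covered by the same part $C$ with nothing of $C$ between them), and $C=C_{\alpha\beta}$ is the part covering that parallel set, so $C-B = \psi(\ast-B)$. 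Here I should check that $\psi$ is genuinely an embedding onto a subset of $E(\cS,\cR)$: the root vectors of $G(S_{\alpha\beta})$ go to the distinct edges $C_{\alpha\beta}-X$, $X\in S_{\alpha\beta}$, which are in $E(\cU_\alpha)$ because $C_{\alpha\beta}$ covers each part of its parallel set, and these are disjoint from the image of $E(S_{\alpha\beta})$. That $\psi$ preserves the partial order and lengths-from-the-shared-vertex in the way required by the binary-tree compatibility conditions is the bookkeeping that feeds into part (iii).

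For part (ii), the key observation is that compatibility of two edges $E,E'\in E(\cS,\cR)$ was defined (Definition before Theorem \ref{thm1: morphisms are maximal compatible sets of edge vectors}) as: there is a cluster morphism from $\cR$ to $\cS$ containing both. A cluster morphism is a disjoint union $\coprod_W T_W$ with $T_W$ a rooted tree on $\pi^{-1}(W)$ generated by a binary forest; and a binary forest on $\cU_\alpha$ is by definition an independent choice of binary trees on the distinct parallel sets $S_{\alpha\beta}$. So for edges in different parts $W_\alpha\neq W_{\alpha'}$, or in different parallel sets within the same part, one may choose the binary tree components entirely independently — hence any such pair is automatically compatible. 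The only content is that one \emph{can} complete an arbitrary single edge in a block to a full binary forest on all of $\cU_\alpha$, i.e.\ that every element of a block lies in some cluster morphism; this follows from Corollary \ref{cor: parts compatible with a root vector} (or directly from Theorem \ref{thm: maximal compatible sets are rooted binary trees}, which says maximal compatible subsets of $G(S_{\alpha\beta})$ are augmented binary trees and in particular exist and can contain any prescribed compatible set). For part (iii), two edges $E,E'$ in the same block $\cB_{\alpha\beta}$ correspond under $\psi^{-1}$ (resp.\ the identity, for $\beta=0$) to elements $X,Y\in G(S_{\alpha\beta})$ (resp.\ $E(S_{\alpha0})$); I claim $E,E'$ are compatible in the cluster-morphism sense iff $X,Y$ are compatible in the augmented-binary-tree sense. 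The forward direction: a cluster morphism containing $E,E'$ restricts to a binary tree on $\pi^{-1}(W_\alpha)$ whose restriction to $S_{\alpha\beta}$ is a binary tree (with root pointing to $C_{\alpha\beta}$, i.e.\ an augmented binary tree under $\psi^{-1}$) containing $X,Y$ — so $X,Y$ are parts of a common augmented binary tree, hence compatible. The converse: if $X,Y$ are compatible in $G(S_{\alpha\beta})$, extend them (Theorem \ref{thm: maximal compatible sets are rooted binary trees}) to an augmented binary tree on $S_{\alpha\beta}$, turn it into a binary forest component via $\psi$, choose arbitrary binary trees on the remaining parallel sets of $\cU_\alpha$ and on the parallel sets of every other $\cU_{\alpha'}$, and assemble a cluster morphism $\cR\to\cS$ containing $E=\psi(X)$ and $E'=\psi(Y)$.

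**Main obstacle.** The routine part is (ii) and the forward direction of (iii); the delicate part is verifying that $\psi$ translates the combinatorics of $G(S_{\alpha\beta})$ faithfully — that a rooted tree on $\pi^{-1}(W_\alpha)$ restricts on each non-maximal parallel set to what is literally an augmented binary tree (root vector $\ast\mapsto$ the covering part $C_{\alpha\beta}$), and conversely that any such choice of augmented binary trees on the $S_{\alpha\beta}$, together with the forced edges to covering parts, reassembles to a legitimate rooted tree on $\pi^{-1}(W_\alpha)$ with no independence violated — i.e.\ that the ``block'' structure of $E(\cU_\alpha)$ is exactly a disjoint union with no cross-block constraints beyond the obvious one that the roots point to the covering parts. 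This is essentially the Proposition stating that a binary forest generates a unique rooted tree $T$ with $F\subseteq T\subseteq E(\cS)$, applied inside each $W_\alpha$; I would cite that Proposition and Corollary \ref{cor: parts compatible with a root vector} and keep the verification brief.
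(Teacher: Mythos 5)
Your argument is correct and is exactly the unpacking the paper intends: its own proof of this lemma is the one-line remark that it ``follows from the definitions of the words,'' and your part-by-part, parallel-set-by-parallel-set verification (unique parallel set containing each non-maximal part, $\psi$ matching root vectors with edges to the covering part, independence of choices across blocks, and Theorem \ref{thm: maximal compatible sets are rooted binary trees} with its corollary to extend compatible pairs to augmented binary trees) is precisely that definitional unwinding. No discrepancy in substance; you have simply written out the details the paper leaves implicit.
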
}
\begin{proof}
This follows from the definitions of the {\color{blue}terms. For (a), $E(\cS,\cR)=\coprod E(W_\alpha)$. Each $E(W_\alpha)$ is the disjoint union of $E(S_{\alpha\beta})$ for each of the parallel sets $S_{\alpha\beta}$ plus, for every $X$ in every nonmaximal parallel set $S_{\alpha\beta}$, the edge $C_{\alpha\beta}-X$ where $C_{\alpha\beta}\in W_\alpha$ is the single element which covers the elements of $S_{\alpha\beta}$. But these are, by definition, the blocks $\cB(S_{\alpha\beta})$.

(b) Every element of every block belongs either to a binary tree for the parallel set in the block or, for nonmaximal parallel sets, is equal to the ``covering edge'' $C-R$ where $R$ is the root and $C$ is the object which covers the root (and all other objects in the same parallel set). The union of these binary trees, together with the covering edges $C-R$, form one cluster morphism $\cR\to \cS$. Since the binary trees are chosen independently, elements from different blocks are always compatible: They belong to the same cluster morphism.

(c) The word ``compatibility'' is being used in two different ways. Elements of a block are compatible if they belong to the same cluster morphism. Definition \ref{def: compatibility for G(V)} is used for compatibility of elements of $G(S_{\alpha\beta})$. These definitions agree by the discussion in (a), (b) above.}
\end{proof}

\begin{proof}[Proof of Theorem \ref {thm1: morphisms are maximal compatible sets of edge vectors}]
This follows immediately from Theorem \ref {thm: maximal compatible sets are rooted binary trees} and the lemma above. Indeed a cluster morphism is defined to be a binary tree structure on every parallel set together with the edge from the root of that binary tree to the part which covers the parallel set in the case the parallel set is not maximal. By Theorem \ref {thm: maximal compatible sets are rooted binary trees}, such structures are maximal compatible subsets of the associated block in $E(\cS,\cR)$. By the lemma, the maximal compatible subsets of $E(\cS,\cR)$ are disjoint unions of maximal compatible subsets of the blocks. Therefore, these maximal compatible sets are the cluster morphisms $\cR\to \cS$.
\end{proof}

\subsection{First case of Theorem \ref {thm2: bijection ET to E}}
{\color{blue} The rest of Section 2 is devoted to the proof of Theorem \ref {thm2: bijection ET to E}. The proof is by induction on the size of $T$ and, in this subsection, we prove the base case when $T$ has only one element $E$. We also assume that $\cQ$ has only one part. Thus, we will show that the linear map $\pi_\ast:\ZZ\cS\to \ZZ\cR$ induces a bijection
\[
	\mu_E: E_E(\cS)\cong E(\cR)
\]where $E_E(\cS)$ is the set of all elements of $E(\cS)$ which are compatible with $E$ but not equal to $E$. Furthermore, we will show that two elements of $E_E(\cS)$ are compatible if and only if the corresponding elements of $E(\cR)$ are compatible. This is Lemmas \ref{lem:root vector case} and \ref{base case of bijection ET to T}.}

Recall that, for $E\in G(V)$, $G_E(V)$ is the set of all $X\in G(V)$ which are compatible with $E$ {\color{blue}but not equal to $E$}. We analyzed the case when $E$ is a root vector in Corollary \ref{cor: parts compatible with a root vector}. This translates into one case of Theorem \ref {thm2: bijection ET to E}.

{
\begin{figure}[htbp]
\begin{center}
\begin{tikzpicture}
	\foreach \x in {0.55,2.55,4.55,5.55}
	{\draw[thick, rounded corners=.35cm] (\x,-.35) rectangle +(.9,.75);
	} 
	\draw (-1.5,.35) node{$\cR=$}
	(0,0) node{1}
	(1,0) node{$B$:2}
	(2,0) node{3}
	(3,.75) node{$Z=A\cup C$}
	(3,0) node{$D$:4}
	(4,0) node{5}
	(5,0) node{$E$:6}
	(6,0) node{$F$:7}
	(7,0) node{8};
	\draw[thick,rounded corners=.35cm,blue]  (-.4,1)--(-.4,-.4)--(.4,-.4)--(.4,.5)--(1.6,.5)--(1.6,-.4)--(2.4,-.4)--(2.4,.5)--(3.6,.5)--(3.6,-.4)--(4.4,-.4)--(4.4,.5)--(5.4,.5)--(6.6,.5)--(6.6,-.4)--(7.4,-.4)--(7.4,1)--cycle;
\end{tikzpicture}
\color{blue}
\caption{We use as example $\cS$ from Example \ref{eg: composition of morpisms} and $\cR$ shown above, given by combining $D$ with $A$ to form a new part $Z=A\cup D$.}
\label{Fig: example R with one big part Z=AuC}
\end{center}
\end{figure}
}

{
Suppose that $\cU=\{X_1,\cdots,X_n\}$ is a nonmaximal parallel set in a noncrossing partition $\cS$ and let $Y$ be the part of $\cS$ which covers $\cU$. {\color{blue}(In $\cS$ from Example \ref{eg: composition of morpisms} we have $\cU=\{B,C,E,F\}$ and $Y=A$.)} Let $\cR$ be the noncrossing partition obtained from $\cS$ by merging the parts $X_k,Y$ to get a new part $Z=X_k\cup Y$. Recall that $\pi_\ast:\ZZ\cS\to \ZZ\cR$ is the linear surjection sending $X_k$ and $Y$ to $Z$ and all other parts of $\cS$ to the same part in $\cR$. The kernel of $\pi_\ast$ is the set of all integer multiples of the vector $R=Y-X_k$. {\color{blue}(In the example shown in Figure \ref{Fig: example R with one big part Z=AuC}, $Y=A$, $X_k=C$ and $R=A-C$.)}
}

\begin{lem}\label{lem:root vector case} (root vector case) The linear map $\pi_\ast:\ZZ\cS\to \ZZ\cR$ induces a bijection $E_R(\cS)\cong E(\cR)$ and this bijection preserves the relation of compatibility and non-compatibility.
\end{lem}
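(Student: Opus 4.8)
The plan is to reduce the statement to Corollary~\ref{cor: parts compatible with a root vector} by tracking how the block decomposition of $E(\cS)$ behaves when $X_k$ is absorbed into $Y$. Throughout, ``block'' refers to the decomposition of $E(\cS)$, and likewise of $E(\cR)$, into the maximal block together with the blocks associated to the non-maximal parallel sets, as in the Lemma on blocks of the previous subsection (taking the one-part partition as reference). The vector $R=Y-X_k$ is the image, under the embedding $\psi\colon G(\cU)\into E(\cS)$ sending $\ast-X_i$ to $Y-X_i$ and $X_i-X_j$ to $X_i-X_j$, of the root vector $\ast-X_k$; so $R$ lies in the block $\cB_\cU:=\psi\big(G(\cU)\big)$ and is the only vector of $E(\cS)$ killed by $\pi_\ast$. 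Since vectors of distinct blocks are always compatible,
\[
	E_R(\cS)=\psi\big(G_{\ast-X_k}(\cU)\big)\ \sqcup\ \bigsqcup_{\beta\neq\cU}\cB_\beta ,
\]
and two elements of $E_R(\cS)$ are compatible precisely when they lie in different blocks, or lie in one common block and correspond there to compatible pieces of (augmented) binary trees on the relevant parallel set.

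Next I would work out the parallel sets of $\cR$. Since the support of $X_k$ is contained in that of $Y$, passing from $\cS$ to $\cR$ leaves every support and every covering relation unchanged apart from the disappearance of $X_k$ and the renaming of $Y$ to $Z$. The parts covered by $Y$ are distributed among the open intervals between consecutive elements of $Y$, together with the two unbounded ends, each such gap carrying one parallel set; $\cU$ is the gap containing $X_k$, and it is the only parallel set of $\cS$ containing $X_k$. Consequently the parallel sets of $\cR$ are those of $\cS$ with $Y$ renamed $Z$, except that $\cU$ is replaced by the single parallel set $\cU\setminus\{X_k\}$ covered by $Z$ when $X_k$ is an end of $\cU$, and by the two parallel sets $\{X_1,\dots,X_{k-1}\}$ and $\{X_{k+1},\dots,X_n\}$ covered by $Z$ when $X_k$ is interior --- that is, exactly the two cases of Corollary~\ref{cor: parts compatible with a root vector}. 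It follows that, for $\beta\neq\cU$, $\pi_\ast$ restricts to a compatibility-preserving bijection from $\cB_\beta$ onto the corresponding block $\cB_\beta^\cR$ of $E(\cR)$, and that $E(\cR)$ is the disjoint union of these $\cB_\beta^\cR$ together with the block $\psi'\big(G(\cU\setminus\{X_k\})\big)$, respectively the two blocks $\psi'\big(G(\cU_L)\big)\sqcup\psi'\big(G(\cU_R)\big)$, where $\psi'$ is defined as $\psi$ but with $Y$ replaced by $Z$.

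It remains to identify $\pi_\ast$ on the block $\cB_\cU$. Extend $\psi$ and $\psi'$ to injective linear maps $\bar\psi\colon\ZZ(\cU_+)\to\ZZ\cS$ with $\ast\mapsto Y$ and $X_i\mapsto X_i$, and $\bar\psi'\colon\ZZ\big((\cU\setminus\{X_k\})_+\big)\to\ZZ\cR$ with $\ast\mapsto Z$ and $X_i\mapsto X_i$. Since $\pi_\ast\bar\psi$ sends both $\ast$ and $X_k$ to $Z$, it annihilates $\ast-X_k$ and hence factors through the quotient $\ZZ(\cU_+)\onto\ZZ(\cU_+)/\langle\ast-X_k\rangle\cong\ZZ\big((\cU\setminus\{X_k\})_+\big)$, the induced map being $\bar\psi'$; and that quotient map, restricted to $G_{\ast-X_k}(\cU)$, is exactly the inverse of the bijection $\sigma_{\ast-X_k}$ of Corollary~\ref{cor: parts compatible with a root vector}, namely the assignment carrying each element to the unique member of $G_{\ast-X_k}(\cU)$ congruent to it modulo $\ast-X_k$. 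Therefore $\pi_\ast\circ\psi=\psi'\circ\sigma_{\ast-X_k}^{-1}$ on $G_{\ast-X_k}(\cU)$, a composite of three compatibility-preserving bijections ($\psi$ and $\psi'$ by the Lemma on blocks, $\sigma_{\ast-X_k}$ by Corollary~\ref{cor: parts compatible with a root vector}), so $\pi_\ast$ carries $\psi\big(G_{\ast-X_k}(\cU)\big)$ bijectively and compatibility-preservingly onto $\psi'\big(G(\cU\setminus\{X_k\})\big)$; the interior case is identical, with $\cU\setminus\{X_k\}$ replaced by $\cU_L\sqcup\cU_R$ and case (2) of Corollary~\ref{cor: parts compatible with a root vector} supplying the mutual compatibility of the two sub-blocks. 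Gluing these bijections over all blocks shows that $\pi_\ast\colon E_R(\cS)\to E(\cR)$ is a bijection, and since it matches the two block decompositions it preserves compatibility and non-compatibility in both directions.

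The transport of structure in the last two paragraphs is formal once the dictionary is set up; the step that genuinely needs care is the parallel-set analysis --- in particular that $X_k$ lies laterally between two parts of no parallel set of $\cS$ other than $\cU$, and that $\cU$ itself splits precisely along the two cases of Corollary~\ref{cor: parts compatible with a root vector}. I expect that to be the main obstacle.
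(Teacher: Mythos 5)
Your proposal is correct and follows essentially the same route as the paper: decompose $E(\cS)$ into blocks, observe that the block of $\cU$ intersected with $E_R(\cS)$ is $\psi\bigl(G_{\ast-X_k}(\cU)\bigr)$, identify it with the block(s) of the split parallel set(s) in $E(\cR)$ via Corollary \ref{cor: parts compatible with a root vector}, and note that the bijection $\sigma_R$ is congruence modulo $R$, i.e.\ inverse to $\pi_\ast$, so compatibility is preserved blockwise. Your write-up just makes explicit the parallel-set bookkeeping (how $\cU$ splits and the other blocks carry over) that the paper treats tersely.
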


\begin{proof}
When $X_k$ is merged with $Y$, the parallel set $\cU$ in $\cS$ is, in general, divided into two parallel sets {\color{blue}$\cU'$ and $\cU''$} in $\cR$. {\color{blue}In Figure \ref{Fig: example R with one big part Z=AuC}, $\cU'=\{B\}$ and $\cU''=\{E,F\}$.} There are exceptional cases when $k=1$ or $n$ or $n=1$ analogous to trees listed in Corollary \ref{cor: parts compatible with a root vector}.

The set $E(\cS)$ is a disjoint union of blocks and the block {\color{blue}$\cB(\cU)=\psi G(\cU)$ reduces to $G_R(\cU)\subset E_R(\cS)$ which is isomorphic to the disjoint union of $G(\cU')$ and $G(\cU'')$ as in Corollary \ref{cor: parts compatible with a root vector} and these are isomorphic to the corresponding blocks $\cB(\cU')$ and $\cB(\cU'')$ of $\cR$. In Figure \ref{Fig: example R with one big part Z=AuC}, $\cB(\cU')=\{Z-B\}$, $\cB(\cU'')=\{Z-E,Z-F,E-F,F-E\}$ and
\[
    G_R(\cU)=\{C-B,C-E,C-F,E-F,F-E\}.
\]
Since the basepoint $\ast\in G(\cU'')$ and $\ast\in G(\cU')$ both correspond to $Z\in \cR$, the bijection $\widetilde\mu_R:G(\cU')\coprod G(\cU'')\cong G_R(\cU)$ from Corollary \ref{cor: parts compatible with a root vector} gives a bijection $\mu_R:\cB(\cU')\coprod \cB(\cU'')\cong G_R(\cU)$ by the formula
\[
	\mu_R(Z-X_j)=\widetilde \mu_R(\ast-X_j)=X_k-X_j
\]and $\mu(X)=X$ for all other elements of $\cB(\cU')\coprod \cB(\cU'')$.} In other words, for $X\in G_R(\cU)$, $\mu_R(\pi_\ast(X))$ is congruent to $X$ modulo $R=Y-X_k$. This is equivalent to the statement that $\mu_R^{-1}=\pi_\ast$. 

Since $\mu_R$ takes compatible pairs of elements to compatible pairs of elements in the case of trees, $\pi_\ast=\mu_R^{-1}$ preserves compatibility relations for noncrossing partitions since they are defined in terms of compatibility in the blocks that they lie in.
\end{proof}

{\color{blue}
\begin{rem}\label{rem: commuting diagram for mu-R}
The following commuting diagram illustrates the relation between the bijections $\mu_R$ and $\widetilde\mu_R$ used in the proof above.
\[
\xymatrixcolsep{30pt}
\xymatrix{
	G(\cU')\coprod G(\cU'') \ar[d]_\cong^\psi \ar[r]^(.6){\widetilde\mu_R} &
	G_R(\cU)\ar[d]^=\\
	\cB(\cU')\coprod \cB(\cU'') \ar[r]^(.6){\mu_R}&
	G_R(\cU).
	}
\]
\end{rem}
}

Now we examine the case when $E=X_b-X_a$ is an edge between two parallel parts of a noncrossing partition. As in the root vector case, the statement follows from the corresponding statement about compatible sets of edges for augmented binary trees.

The basic idea is that, when we build up an augmented binary tree starting with a fixed edge $E=v_k-v_{k-1}$ of length one, this edge behaves like a single vertex which we will label $v_k$. An augmented binary tree $T_+$ on $V$ which contains $E$ is equivalent to an augmented binary tree $T_+'$ on $V'=\{v_1,\cdots,\widehat{v_{k-1}},\cdots,v_n\}$, We take $T_+$ to be the union of all elements of $T_+'$ except for those of the form $v_k-v_j$ for $j<k$ which we replace with $v_{k-1}-v_j\in T_+$. (So that $v_k$ does not have two left children in $T_+$.)

If $T_+$ contains a fixed edge $E=v_b-v_a$ or length $b-a\ge2$ then it will also contain a binary tree $T''$ on the set $V''=\{v_{a+1},\cdots,v_{b-1}\}$ plus an edge from the root of $T''$ to $v_a$ since, by the noncrossing condition, no edge of $T_+$ can go from inside the interval $(a,b)$ to outside $[a,b]$. We treat this edge as corresponding to the root vector for $T_+''$. The rest of $T_+$ will be equivalent to an augmented binary tree $T_+'$ on $V'=\{v_1,\cdots,v_{a-1},v_b,\cdots,v_n\}$ with any edge $v_b-v_j\in T_+'$ with $j<a$ replaced with $v_a-v_j\in T_+$:
\[
	v_b-v_j\in T_+'\mapsto v_a-v_j\in T_+.
\]

{%
\begin{figure}[htbp]
\begin{center}
\begin{tikzpicture}[blue,scale=.6]
\begin{scope}[xshift=-8cm,yshift=4cm]
\draw[fill] (3,0) circle[radius=3pt];
\draw[fill] (4,2) circle[radius=3pt];
\draw[fill] (5,0) circle[radius=3pt];
\draw[thick] (3,0)--(4,2)--(5,0);
\draw (3,0) node[left]{$v_3$};
\draw (4,2) node[right]{$v_4$};
\draw (5,0) node[right]{$v_5$};
\draw (2,2) node{$T'':$};
\end{scope}

\coordinate (R8) at (8,8);
\coordinate (A6) at (6,6);
\coordinate (A9) at (9,6);
\coordinate (B2) at (2,4);
\coordinate (B7) at (7,4);
\coordinate (B10) at (10,4);
\coordinate (C1) at (1,3);
\coordinate (C4) at (4,2);
\foreach \x in {R8,A6,A9,B7,B10,C1}
	\draw[fill] (\x) circle[radius=3pt];
\draw[thick] (C1)--(A6)--(B7)  (A6)--(R8)--(A9)--(B10);
\draw (R8) node[above]{$v_8$};
\draw (5.7,6) node[above]{$v_6$};
\draw (A9) node[right]{$v_9$};
\draw (B7) node[left]{$v_7$};
\draw (B10) node[left]{$v_{10}$};
\draw (C1) node[left]{$v_1$};
\draw (0,7) node{$T':$};
\end{tikzpicture}
\color{blue}
\caption{Take $T$ as in Example \ref{eg: a binary tree}. Let $E=v_6-v_2$. Then we get trees $T'$ and $T''$ as shown above on vertex sets $V'=\{v_1,v_6,v_7,v_8,v_9,v_{10}\}$ and $V''=\{v_3,v_4,v_5\}$. To reconstruct $T$ we add back the vertex $v_2$, the edge $E=v_6-v_2$ and replace edge $v_6-v_1$ with $v_2-v_1$ (so that $v_6$ does not have two left children). We also add the edge $v_2-v_4$ connecting $v_2$ to the root of $T''$. If $T',T''$ are augmented, we first replace $\ast\in T_+'$ with $v_2$, then proceed to reconstruct $T_+$.}
\label{Fig: two trees out of one}
\end{center}
\end{figure}
}

This analysis for one edge $E$ in one tree $T_+$ translates into the following correspondence for all possible components of the trees $T_+,T_+',T_+''$. We note that $T_+,T_+',T_+''$ are arbitrary augmented binary trees on their vertex sets. (I.e., starting with any $T_+$ containing $E$, we get $T_+',T_+''$ and conversely, starting with any $T_+',T_+''$ we get a unique $T_+$ containing $E$.)

To do these cases simultaneously it is helpful to remember that $G(V)$ has $n^2$ elements. In particular, $G(\emptyset)=\emptyset$.

\begin{lem}\label{lem: positive edge case}
Suppose that $E=v_b-v_a$ where $1\le a<b\le n$. Then there is a bijection
\[
	\widetilde\mu_E:G(V')\smallcoprod G(V'')\to G_E(V)
\]
where $V'=\{v_1,\cdots,v_{a-1},v_b,v_{b+1},\cdots,v_n\}$ and $V''=\{v_{a+1},\cdots,v_{b-1}\}$ given by sending each $E'\in G(V')$ to $\widetilde\mu_E(E')=E'\in G_E(V)$ except for edges of the form $v_b-v_j\in G(V')$ for $j<a$ where we take
\[
	\widetilde\mu_E(v_b-v_j)=v_a-v_j.
\]
For $G(V'')$, each edge $E''\in E(V'')$ is sent to itself and the root vector $\ast-v_k$ is sent to 
\[
    \widetilde\mu_E(\ast-v_k)=v_a-v_k.
\]
Furthermore, the bijection $\widetilde\mu_E$ has the property that $X,Y$ in the domain of $\widetilde\mu_E$ are compatible if and only if $\widetilde\mu_E(X),\widetilde\mu_E(X)$ are compatible in $G_E(V)$.
\end{lem}

The analogous statement for {\color{blue}the left pointing edge} $E=v_a-v_b$ is the following.

\begin{lem}\label{lem: negative edge case}
Suppose that $E=v_a-v_b$ where $1\le a<b\le n$. Then there is a bijection
\[
	\widetilde\mu_E:G(V')\smallcoprod G(V'')\to G_E(V)
\]
where $V'=\{v_1,\cdots,v_{a},v_{b+1},\cdots,v_n\}$ and $V''=\{v_{a+1},\cdots,v_{b-1}\}$ given by sending each $E'\in G(V')$ to $\widetilde\mu_E(E')=E'\in G_E(V)$ except for edges of the form $v_a-v_j\in G(V')$ for $j>b$ where we take
\[
	\widetilde\mu_E(v_a-v_j)=v_b-v_j.
\]
For $G(V'')$, each edge $E''\in E(V'')$ is sent to itself and the root vector $\ast-v_k$ is sent to 
\[
    \widetilde\mu_E(\ast-v_k)=v_b-v_k.
\]
Furthermore, the bijection $\widetilde\mu_E$ has the property that $X,Y$ in the domain of $\widetilde\mu_E$ are compatible if and only if $\widetilde\mu_E(X),\widetilde\mu_E(X)$ are compatible in $G_E(V)$.
\end{lem}

\begin{proof}
In the discussion before the statements we started with an arbitrary augmented binary tree on $V$ with a fixed edge $E$ and constructed augmented binary trees on $V',V''$ which were also arbitrary. Two compatible elements of $G_E(V)$ can be extended to a tree. Passing to the corresponding trees on $V',V''$, we see that the corresponding edges are compatible.
\end{proof}

These statements are equivalent to the following special case of Theorem \ref{thm2: bijection ET to E}. Suppose that $\cS$ is a refinement of $\cQ$. We recall that, any edge $E\in E(\cS,\cQ)$ has the form $E=X_b-X_a$ where either $X_b$ covers $X_a$ or $X_a,X_b$ form part of a parallel set $\cU=\{X_1,\cdots,X_m\}\subseteq \pi^{-1}(W)$ in the inverse image $\pi^{-1}(W)\subseteq\cS$ of one part $W\in \cQ$. Recall that, in the second case, $E(\cS,\cQ)$ contains a block $\cB$ isomorphic to $G(\cU)$ (or to $E(\cU)$ when $\cU$ is maximal).

{\color{blue}
We take as an example, $\cS$ from Example \ref{eg: composition of morpisms}. Let $\cQ=\{V\}$ the partition of $V$ with one part (called $\cP$ in Example \ref{eg: composition of morpisms}). We merge two parts of $\cS$ to form $\cR$ shown in Figure \ref{Fig: example of partition R with 5 parts}.
{
\begin{figure}[htbp]
\begin{center}
\begin{tikzpicture}
	\foreach \x in {0.55,2.55,4.55}
	{\draw[thick, rounded corners=.35cm] (\x,-.35) rectangle +(.9,.75);
	} 
	\draw (-1.5,.7) node{$\cR=$}
	(0,0) node{1}
	(1,0) node{$B$:2}
	(2,0) node{3}
	(4,.95) node{$Z=C\cup F$}
	(3,0) node{$D$:4}
	(4,0) node{5}
	(5,0) node{$E'$:6}
	(7,0) node{8}
	(6,0) node{7};
	\draw[thick,rounded corners=.35cm]  (-.4,2)--(-.4,-.4)--(.4,-.4)--(.4,1.5)--(4.6,1.5)--(5.4,1.5)--(6.6,1.5)--(6.6,-.4)--(7.4,-.4)--(7.4,2)--cycle;
	\draw (0,1.5) node{$A$};
\begin{scope}[xshift=1cm]
	\draw[thick,rounded corners=.35cm]  (.6,1.2)--(.6,-.4)--(1.4,-.4)--(1.4,.7)--(2.6,.7)--(2.6,-.4)--(3.4,-.4)--(3.4,.7)--(4.6,.7)--(4.6,-.4)--(5.4,-.4)--(5.4,1.2)--cycle;
	\end{scope}
\end{tikzpicture}
\color{blue}
\caption{This is an example of partition $\cR$ given by merging the two parts $C,F$ in the parallel set $\cU=\{B,C,E',F\}$ in $\cS$ from Example \ref{eg: composition of morpisms}. The part $E$ has been renamed $E'$ to avoid confusion.}
\label{Fig: example of partition R with 5 parts}
\end{center}
\end{figure}
}
}

In general, the partition $\cR$ obtained from $\cS$ by merging the parts $X_a,X_b$ together is noncrossing and $\cR$ is a refinement of $\cQ$. Recall that $\pi_\ast:\ZZ\cS\to \ZZ\cR$ is the linear surjection sending $X_a$ and $X_b$ to $Z=X_a\cup X_b$ and all other parts $Y\in\cS$ to the same part $Y\in\cR$. The kernel of $\pi_\ast$ is the set of all integer multiples of the vector $E=X_a-X_b$.

\begin{lem}\label{base case of bijection ET to T}
When $rk\,\cR=1+rk\,\cS$, the linear map $\pi_\ast:\ZZ\cS\to \ZZ\cR$ induces a bijection $E_E(\cS,\cQ)\cong E(\cR,\cQ)$ and this bijection preserves the compatibility relation.
\end{lem}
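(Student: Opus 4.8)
The plan is to reduce Lemma~\ref{base case of bijection ET to T} to the three preceding combinatorial lemmas about augmented binary trees, namely Lemma~\ref{lem:root vector case} (which we have already, via Corollary~\ref{cor: parts compatible with a root vector}), Lemma~\ref{lem: positive edge case}, and Lemma~\ref{lem: negative edge case}. The key structural fact is that $E(\cS,\cQ)$ is a disjoint union of blocks $\cB_{\alpha\beta}$, each block isomorphic (as a set with a compatibility relation) to $G(S_{\alpha\beta})$ for some parallel set $S_{\alpha\beta}$ in the fiber $\iota^{-1}(W_\alpha)\subseteq\cS$, or to $E(S_{\alpha0})$ for a maximal parallel set; moreover, elements of distinct blocks are always compatible. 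Since $E=X_a-X_b$ (or $X_b-X_a$) with $X_a,X_b$ both lying in the inverse image of a single part $W\in\cQ$, the edge $E$ lies in one specific block $\cB$, and so does every element of $E(\cS,\cQ)$ that is \emph{not} automatically compatible with $E$; all elements outside $\cB$ are in $E_E(\cS,\cQ)$ automatically. The same holds downstream: merging $X_a$ and $X_b$ leaves all blocks of $E(\cR,\cQ)$ other than the ones coming from $\cB$ untouched, and $\pi_\ast$ is literally the identity on those.

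So the first step is to observe that $\pi_\ast$ restricts to a bijection between the union of all blocks of $E(\cS,\cQ)$ other than $\cB$ and the union of all blocks of $E(\cR,\cQ)$ other than the ones that $\cB$ produces, and that this bijection trivially preserves compatibility and non-compatibility (because $\pi_\ast$ is the identity there and compatibility is defined block-by-block, with cross-block pairs always compatible on both sides). The second step is the genuinely local computation: inside the single block $\cB$, we are in one of two situations. Either $X_b$ covers $X_a$ (the ``root vector'' shape), in which case the analysis is exactly Corollary~\ref{cor: parts compatible with a root vector}/Lemma~\ref{lem:root vector case}: merging splits the parallel set $\cU$ into $\cU'\sqcup\cU''$, the block $\cB\cong G(\cU)$ goes to $G_R(\cU)\cong G(\cU')\sqcup G(\cU'')$, and $\sigma_E$ is characterized as ``the unique representative of the class modulo $E$ lying in $G_E$,'' i.e.\ $\sigma_E^{-1}=\pi_\ast$. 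Or $X_a,X_b$ are two distinct members of a parallel set $\cU$, in which case it is exactly Lemma~\ref{lem: positive edge case} or Lemma~\ref{lem: negative edge case}, depending on the lateral order of $X_a,X_b$: the block $\cB\cong G(\cU)$ (or $E(\cU)$ if $\cU$ is maximal) becomes $G_E(\cU)$, which those lemmas identify with $G(\cU')\sqcup G(\cU'')$, and the resulting $\cU',\cU''$ are precisely the two parallel sets into which merging $X_a$ and $X_b$ divides $\cU$ inside $\cR$, with $\cU''$ covered by the merged part $Z$.

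The third step is bookkeeping to see that the ``$G(\cU')\sqcup G(\cU'')$'' produced by the tree lemmas really matches the blocks of $E(\cR,\cQ)$. The subtle point, already flagged in the proof of Lemma~\ref{lem:root vector case}, is that the basepoints: in the tree statement, the root vector $\ast-v_k$ of $G(V'')$ is sent by $\sigma_E$ to an edge pointing at $v_a$; after merging, that target is the new part $Z=X_a\cup X_b$, which is precisely the part covering $\cU''$ in $\cR$, so the formula $\sigma_E(\ast-v_k)=v_a-v_k$ becomes exactly the embedding $\psi:G(\cU'')\into E(\cR,\cQ)$ used to define the block of $\cU''$. Similarly, the edges of the form $v_b-v_j$ in $G(V')$ that $\sigma_E$ rewrites as $v_a-v_j$ are the ones incident to $X_b$, and after merging they become edges incident to $Z$, so again $\sigma_E$ composed with $\pi_\ast$ is the identity on the block. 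Combining this with the characterization $\sigma_E^{-1}=\pi_\ast$ from the tree lemmas gives that $\pi_\ast$ maps $E_E(\cS,\cQ)=$ (blocks $\neq\cB$) $\sqcup$ $G_E(\cU)$ bijectively onto $E(\cR,\cQ)$, preserving compatibility both within and across blocks.

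The main obstacle I expect is purely a matter of careful case management rather than new ideas: one must handle the degenerate configurations ($X_a$ or $X_b$ at the left or right end of the parallel set, or the parallel set having just one other element, or the parallel set being maximal so that one works with $E(\cU)$ rather than $G(\cU)$ and there is no covering part), and one must verify that each of these matches the corresponding degenerate case already accounted for in Corollary~\ref{cor: parts compatible with a root vector} and in Lemmas~\ref{lem: positive edge case} and~\ref{lem: negative edge case} (where $G(\emptyset)=\emptyset$ is the relevant convention). Since the block structure reduces everything to a single parallel set and those three lemmas were stated in exactly the generality needed, the proof should amount to: ``this follows from the block decomposition of $E(\cS,\cR)$ together with Lemmas~\ref{lem:root vector case}, \ref{lem: positive edge case} and~\ref{lem: negative edge case}, the only check being that the parallel sets $\cU',\cU''$ and basepoints produced on the tree side agree with the blocks of $E(\cR,\cQ)$ produced by merging $X_a$ with $X_b$.''
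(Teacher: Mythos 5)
Your proposal is correct and follows essentially the same route as the paper: reduce to the block containing $E$ (all other blocks are fixed by $\pi_\ast$ and consist of elements automatically compatible with $E$), settle the covering case by Lemma \ref{lem:root vector case}, and settle the parallel case by Lemmas \ref{lem: positive edge case} and \ref{lem: negative edge case}, identifying the resulting $G(\cU')\sqcup G(\cU'')$ with the two blocks of $E(\cR,\cQ)$ coming from the parallel sets created by merging $X_a$ with $X_b$, so that $\sigma_E=\pi_\ast^{-1}$ and compatibility is preserved. The paper's only cosmetic difference is that it first assumes, without loss of generality, that $\cQ$ has a single part, which your blockwise bookkeeping accomplishes equivalently.
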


\begin{proof} Without loss or generality we may assume that $\cQ$ has only one part since parts of $\cS$ and $\cR$ in different parts of $\cQ$ are unrelated. Then $E(\cR,\cQ)=E(\cR)$ and $E_E(\cS,\cQ)=E_E(\cS)$. The case when $X_b$ covers $X_a$ was settled in Lemma \ref{lem:root vector case}. So, suppose $X_a,X_b$ are parallel.

{\color{blue}There is one block $\cB=\cB(\cU)\subset E(\cS)$ containing the edge $E$. Any other block} $\cB'\subseteq E(\cS)$ is a block of both $E_E(\cS)$ and $E(\cR)$ and $\pi_\ast(\cB')=\cB'$. Let $\coprod \cB'$ be the union of these other blocks in both sets. Then, it suffices to consider the complement of $\coprod\cB'$ in $E_E(\cS)$ and $E(\cR)$. {\color{blue}These complements are the subsets corresponding to the block $\cB(\cU)\cong G(\cU)$ where $\cU=\{X_1,\cdots,X_m\}$.} The corresponding subset of $E_E(\cS)$ is
\[
	G_E(\cU)=G(\cU)\cap E_E(\cS)=E_E(\cS)\backslash\smallcoprod \cB'.
\]
{\color{blue}As in Remark \ref{rem: commuting diagram for mu-R}, there is a commuting diagram relating the bijections $\widetilde\mu_E$ from Lemmas \ref{lem: negative edge case}, \ref{lem: positive edge case} to a bijection
\[
    \mu_E:\cB(\cU')\coprod \cB(\cU'')\cong G_E(\cU)\subset E_E(\cS)
\]
where $\cB(\cU')$ and $\cB(\cU'')$ are the blocks in $E(\cR)$ given by the parallel sets
\[
	\cU'=\{X_1,\cdots,X_{a-1},Z,X_{b+1},\cdots,X_m\} \text{  and  } \cU''=\{ X_{a+1},\cdots,X_{b-1}\}
\]
in $\cR$.} Furthermore, this bijection sends each edge $X-Y$ to the same edge $X-Y$ except in the case when one of the parts $X,Y$ is equal to $Z=X_a\cup X_b$ in which case it is replaced by $X_a$ or $X_b$ whichever produces an edge compatible with $E$. 

{\color{blue} In our example $\cU=\{B,C,E',F\}$, $\cU'=\{B,Z\}$, $\cU''=\{E'\}$ and $E=F-C$.
\[
    \cB(\cU')=\{A-B,A-Z,B-Z,Z-B \},\quad \cB(\cU'')=\{Z-E'\}
\]
The bijection of $\cB(\cU')\coprod \cB(\cU'')$ with 
\[
G_E(\cU)=\{ A-B,A-F, B-F,C-B,C-E'\}
\]
is given by replacing $Z$ with $C$ or $F$ whichever is allowed.
}

In all cases, the {\color{blue}bijection $\mu_E$ corresponding to the bijections $\widetilde \mu_E$} of Lemmas \ref{lem: positive edge case} and \ref{lem: negative edge case} takes elements of $E(\cR)$ to elements in their inverse image in $E_E(\cS)$. So, $\mu_E=\pi_\ast^{-1}$. Since $\mu_E$ preserves compatibility, so does $\pi_\ast$.
\end{proof}

\subsection{Proof of Theorem \ref {thm2: bijection ET to E}}

The proof of Theorem \ref {thm2: bijection ET to E} is now an easy induction on the difference in ranks $rk\,\cR-rk\,\cS$. By Lemma \ref{base case of bijection ET to T}, the theorem holds when this difference is 1. So, suppose that $\cS$ is a refinement of $\cR$ and $\cR$ is a refinement of $\cQ$ and $rk\,\cR-rk\,\cS=k\ge2$. 

Let $T=\{E_1,\cdots,E_k\}$ be a cluster morphism $\cR\to\cS$. Then $T\subset E(\cS,\cQ)$. Suppose that $E_1=Y-X$ and let $\cP$ be obtained from $\cS$ by fusing $X,Y$ together to a single part $Z=X\cup Y$. Then $\cP$ is a refinement of $\cR$ and, by Lemma \ref{base case of bijection ET to T}, {\color{blue}the linear mapping $\pi_\ast:\ZZ\cS\to\ZZ\cP$ induces bijections:
\[
	\pi^{\cQ}_\ast:E_{E_1}(\cS,\cQ)\cong E(\cP,\cQ),\quad \pi^{\cR}_\ast:E_{E_1}(\cS,\cR)\cong E(\cP,\cR)
\]
where the second bijection is the restriction of the first to $E_{E_1}(\cS,\cR)\subseteq E_{E_1}(\cS,\cQ)$.

Since $[T]:\cR\to\cS$ is a cluster morphism, $E_2,\cdots,E_k$ are compatible with $E_1$ and with each other. So, $E_2,\cdots,E_k\in E_{E_1}(\cS,\cR)\subseteq E_{E_1}(\cS,\cQ)$ which map to compatible elements $\pi_\ast^{\cR}(E_i)\in E(\cP,\cR)\subseteq E(\cP,\cQ)$. Since $rk\,\cP-rk\,\cR=k-1$, this means that $S=\{\pi_\ast^{\cR}(E_2),\cdots,\pi_\ast^{\cR}(E_k)\}$ gives a cluster morphism $[S]:\cR\to \cP$. 

Since $\pi_\ast^{\cQ}$ sends compatible elements of $E_{E_1}(\cS,\cQ)$ to compatible elements of $E(\cP,\cQ)$, it induces a bijection
\[
	E_T(\cS,\cQ)\cong E_S(\cP,\cQ)
\]
By induction on $k$, the linear map $\rho_\ast:\ZZ\cP\to\ZZ\cR$ induces a bijection
\[
	\rho_\ast^{\cQ}:E_S(\cP,\cQ)\cong E(\cR,\cQ)
\]
Composing these we get a bijection
\[
	(\rho\pi)_\ast^\cQ:E_T(\cS,\cQ)\cong E(\cR,\cQ)
\]
induced by the composite linear mapping $\rho_\ast\circ\pi_\ast:\ZZ\cS\to \ZZ\cP\to\ZZ\cR$. Since $\pi_\ast^{\cQ}$ and $\rho_\ast^{\cQ}$ both preserve compatibility, so does their composite $(\rho\pi)_\ast^\cQ$.}

This proves Theorem \ref {thm2: bijection ET to E} for all $k$ and complete the proof that composition of cluster morphisms is well-defined and associative.

\section{Cube complexes}
\newcommand{\rank}{rank}

We will show that the classifying space $B\cN\cP(n)$ of the category of noncrossing partitions of $n$ is a locally $CAT(0)$ cube complex, i.e., its universal covering space is a $CAT(0)$ cube complex. Since $CAT(0)$ spaces are contractible, this implies that $B\cN\cP(n)$ is a $K(\pi,1)$. This follows from general sufficient conditions on a cubical category which we will describe.

\subsection{Cubical categories} The basic model for a category whose geometric realization is an $n$ cube is the category $\cI^n$ where $\cI$ is the poset category with two objects $0,1$ and one nonidentity morphism $0\to 1$ corresponding to the relation $0<1$. Recall that a poset $P$ can be taken to be the set of objects of a category $\cC(P)$ with a single morphism $A\to B$ whenever $A\le B$ and no morphisms $A\to B$ otherwise. 

The product category $\cI^n$ is also a poset category. The set of objects is the set of all vectors $x\in\RR^n$ so that every coordinate is either 0 or 1. We have $x\le y$ and thus a unique morphism $x\to y$ iff $x_i\le y_i$ for $i=1,\cdots,n$. It is very easy to see that the geometric realization of the category $\cI^n$ is canonically homeomorphic to the $n$-cube $[0,1]^n$. For example, for $n=2$ we have:
\begin{center}
\begin{tikzpicture}
\draw(0,0) node{ 
$
\xymatrixrowsep{5pt}\xymatrixcolsep{20pt}
\xymatrix{
 &(0,1)\ar[r] & (1,1)\\
 \cI^2: &&&& B\cI^2:\\
&(0,0)\ar[r]\ar[uu]\ar[uur] & (1,0)\ar[uu]
	}
$
};
\begin{scope}[xshift=4.8cm,yshift=-10mm]
\draw[fill,gray!35!white] (0,0) rectangle (2,2);
\foreach \x in {(0,0), (2,0), (0,2), (2,2)}
\draw[fill] \x circle[radius=2pt];
\draw[thick] (0,0)--(2,0)--(2,2)--(0,0)--(0,2)--(2,2);
\end{scope}

\end{tikzpicture}
\end{center}
We will define a ``cubical category'' to be a graded category in which the factorization category of any morphism of degree $n$ is isomorphic to $\cI^n$ for some $n$ plus other conditions. To be consistent with the rest of the paper we will use the word ``rank'' instead of ``degree.''

{\color{blue}
There are two factorizations of the rank 2 morphism $(0,0)\to (1,1)$. They are $(0,0)\to (0,1)\to (1,1)$ and $(0,0)\to (1,0)\to (1,1)$.

\begin{eg}\label{eg: 5 binary trees with two edges}
In Figure \ref{Fig: factorization of morphism of rank 2}, we use the notation of Definition \ref{def: binary tree notation} to specify 2 out of the 5 binary trees giving the cluster morphisms $\cP=\{V\}\to \Omega=\{A,B,C\}$ where $A,B,C=\{0\}, \{1\}, \{2\}$ and $V=\{0,1,2\}$. Each of these morphism has 2 factorizations giving the two factorization square shown.
\end{eg}
}

{
\begin{figure}[htbp]
\begin{center}
\begin{tikzpicture}
\begin{scope}[xshift=5.5cm,yshift=3cm,scale=.75]
\draw (0,0) node{0} circle[radius=.4cm];
\draw (1,0) node{1} circle[radius=.4cm];
\draw (2,0) node{2} circle[radius=.4cm];
\draw (-.9,0) node{$\Omega=$};
\end{scope}
\draw[thick,->] (.8,.4)--(.8,2.6); 
\draw[thick,->] (5.8,.4)--(5.8,2.6);
\draw[thick,->] (-4.3,.4)--(-4.3,2.6);
\draw[thick,->] (2.2,3)--(4.1,3); 
\draw[thick,->] (-.7,3)--(-2.8,3);
\draw[thick,->] (-.7,0)--(-2.8,0);
\draw[thick,->] (2.3,0)--(4.3,0);
\draw (.8,1.5) node[right]{$[\beta_{12}]$};
\draw (5.8,1.5) node[right]{$[-\beta_{01}]$};
\draw (-5.4,1.5) node[right]{$[\beta_{12}]$};
\draw (3.2,3) node[above]{$[\beta_{02}]$};
\draw (3.2,0) node[above]{$[\beta_{02}]$};
\draw (-1.8,3) node[above]{$[-\beta_{02}]$};
\draw (-1.8,0) node[above]{$[-\beta_{01}]$};
\begin{scope}[xshift=2.8cm,yshift=.9cm,scale=.8]
\coordinate (v0) at (0,1);
\coordinate (v1) at (1,0.5);
\coordinate (v2) at (2,1.5);
\foreach \x in {v0,v1,v2}
	\draw[fill] (\x) circle[radius=2pt];
\draw[thick] (v1)--(v0)--(v2);
\draw (v0) node[left]{0};
\draw (v1) node[right]{1};
\draw (v2) node[right]{2};
\draw (.2,.7) node[below]{\small$-\beta_{01}$};
\draw (.9,1.6) node{\small$\beta_{02}$};
\end{scope}
\begin{scope}[xshift=-2.4cm,yshift=1cm,scale=.8]
\coordinate (v0) at (0,1.5);
\coordinate (v1) at (1,0.5);
\coordinate (v2) at (2,1);
\foreach \x in {v0,v1,v2}
	\draw[fill] (\x) circle[radius=2pt];
\draw[thick] (v1)--(v2)--(v0);
\draw (v0) node[left]{0};
\draw (v1) node[left]{1};
\draw (v2) node[right]{2};
\draw (1.8,.3) node{\small$\beta_{12}$};
\draw (1.1,1.7) node{\small$-\beta_{02}$};
\end{scope}
\begin{scope}[xshift=-5cm,yshift=3cm,scale=.75]
\draw (0,0) node{0} circle[radius=.4cm];
\draw (1,0) node{1} circle[radius=.4cm];
\draw (2,0) node{2} circle[radius=.4cm];
\draw (-.9,0) node{$\Omega=$};
\end{scope}
\begin{scope}[xshift=-5cm,yshift=0cm,scale=.75]
\draw (0,0) node{0} circle[radius=.4cm];
\draw (1,0) node{1} ;
\draw (2,0) node{2} ;
\draw[rounded corners=.3cm]  (.6,.4)--(.6,-.4)--(2.4,-.4)--(2.4,.4)--cycle;
\end{scope}
\begin{scope}[xshift=5cm,yshift=0cm,scale=.75]
\draw (0,0) node{0};
\draw (1,0) node{1} ;
\draw (2,0) node{2} circle[radius=.4cm];
\draw[rounded corners=.3cm]  (-.4,.4)--(-.4,-.4)--(1.4,-.4)--(1.4,.4)--cycle;
\end{scope}
\begin{scope}[xshift=0cm,yshift=0cm,scale=.75]
\draw (0,0) node{0};
\draw (1,0) node{1} ;
\draw (2,0) node{2} ;
\draw[rounded corners=.3cm]  (-.4,.4)--(-.4,-.4)--(2.6,-.4)--(2.6,.4)--cycle;
\end{scope}
\begin{scope}[xshift=0cm,yshift=3cm,scale=.75]
\draw (0,0) node{0} ;
\draw (1,0) node{1} circle[radius=.4cm];
\draw (2,0) node{2} ;
\draw[rounded corners=.3cm]  (-.4,1.2)--(-.4,-.4)--(.4,-.4)--(.4,.7)--(1.6,.7)--(1.6,-.4)--(2.4,-.4)--(2.4,1.2)--cycle;
\end{scope}
\end{tikzpicture}
\color{blue}
\caption{Two morphisms of rank 2 from the one part partition $\cP=\{V\}$ of $V=\{0,1,2\}$ to the partition $\Omega$ of $V$ into 3 parts. Each morphism has 2!=2 factorizations, giving the two displayed factorization squares. Note that the only morphism allowed in the middle is $\beta_{12}$, not $-\beta_{12}$ since the part $W=\{0,2\}$ covers $1$. Figure \ref{Fig: composition of morphisms using trees} shows how the morphism on the right was computed.}
\label{Fig: factorization of morphism of rank 2}
\end{center}
\end{figure}
}

\begin{figure}[htbp]
\begin{center}
\begin{tikzpicture}
\begin{scope}[xshift=-5cm,]
\coordinate (v0) at (0,0);
\coordinate (v1) at (1,0);
\coordinate (v2) at (2,1);
\foreach \x in {v0,v1,v2}
	\draw[fill] (\x) circle[radius=2pt];
\draw[very thick] (v0)--(v2);
\draw[thick] (v2)--(v1);
\draw (v0) node[left]{0};
\draw (v1) node[left]{1};
\draw (v2) node[right]{2};
\draw (1.8,.3) node{\small$\beta_{12}$};
\draw (1,.9) node{\small$\beta_{02}$};
\end{scope}
\begin{scope}
\coordinate (v0) at (0,.5);
\coordinate (v1) at (1,0);
\coordinate (v2) at (2,1.5);
\foreach \x in {v0,v1,v2}
	\draw[fill] (\x) circle[radius=2pt];
\draw[thick] (v1)--(v0);
\draw[very thick] (v0)--(v2);
\draw (v0) node[left]{0};
\draw (v1) node[right]{1};
\draw (v2) node[right]{2};
\draw (.3,-.1) node{\small$-\beta_{01}$};
\draw (1,1.4) node{\small$\beta_{02}$};
\end{scope}
\end{tikzpicture}
\color{blue}
\caption{This shows how composition $[\beta_{02}]\circ[\beta_{12}]$ can be computed diagramatically. If we naively put these edges together we get the graph on the left side. This is not allowed since $v_2$ cannot have two left children. We are not allowed to change the second edge $\beta_{02}$. So, we have to change $\beta_{12}$. We are allowed to add a multiple of $\beta_{02}$. Visually this means we can slide the right hand vertex of $\beta_{12}$ down to the other end of $\beta_{02}$ giving the figure on the right. This one is allowed. So, it is the composition. (The two displayed graphs are the only possibilities.)}
\label{Fig: composition of morphisms using trees}
\end{center}
\end{figure}
\begin{defn}
Let $f:A\to B$ be a morphism in a category $\cC$. Then the \emph{factorization category} $Fac(f)$ is the category whose objects are \emph{factorizations} of $f$ given by triples $(C,g,h)$ where $C$ is an object of $\cC$ and $g,h$ are morphisms $g:A\to C$, $h:C\to B$ so that $f=h\circ g$. 
\[
	A\xrarrow g C\xrarrow h B
\]
A morphism $(C,g,h)\to (C',g',h')$ is defined to be a morphism $\phi:C\to C'$ so that $g'=\phi\circ g$ and $h=h'\circ\phi$. There is a forgetful functor $Fac(f)\to\cC$ taking $(C,g,h)$ to $C$. The morphism $f$ is \emph{irreducible} if, for any factorization $f=h\circ g$, either $g$ or $h$ is an isomorphism.

We call $g:A\to C$ a ``first factor'' of $f$ if $g$ is irreducible. We call $h:C\to B$ a ``last factor'' of $f$ if $h$ is irreducible.
\end{defn}

\begin{defn}
A \emph{cubical category} is a small category $\cC$ with the following properties.
\begin{enumerate}
\item Every morphism $f:A\to B$ in the category has a \emph{{\rank}} $rk\,f$ which is a nonnegative integer so that $rk\,(f\circ g)=rk\,f+rk\,g$.
\item If $rk\,f=n$ then the factorization category of $f$ is isomorphic to the standard $n$-cube category: $Fac(f)\cong\cI^n$.
\item The forgetful functor $Fac(f)\to\cC$ is an embedding. In particular every morphism of {\rank} $n$ has $n$ distinct first factors and $n$ distinct last factors.
\item Every morphism of {\rank} $n$ is determined by its $n$ first factors.
\item Every morphism of {\rank} $n$ is determined by its $n$ last factors.
\end{enumerate}
\end{defn}

Condition (1) implies that every isomorphism has rank 0. Condition (2) implies that every rank 0 morphism is an identity map. So, a cubical category has only one object in every isomorphism class.
\vs2

\begin{minipage}{0.7\textwidth}
\begin{eg}\label{eg: morphism with 6 factorizations} {\color{blue}Figure \ref{Fig: cube} shows the $6$ factorizations of the morphism of rank 3 given by the binary tree shown on the right. Example \ref{eg: 5 binary trees with two edges} gives easier examples. We always use $\Omega$ to denote the partition of a set $V$ into its individual elements.}
\end{eg}
\end{minipage}
\hfill
\begin{minipage}{0.3\textwidth}

\begin{center}
\begin{tikzpicture}
\coordinate (A) at (0,1);
\coordinate (B) at (1,0);
\coordinate (C) at (2,.5);
\coordinate (BC) at (1.7,.25);
\coordinate (D) at (3,1.5);
\draw[thick] (B)--(C)--(A)--(D);
\foreach \x/\xtext in {A/0,B/1,C/2,D/3}
\draw[fill] (\x) circle[radius=2pt];
\draw (A) node[left]{$0$};
\draw (B) node[left]{$1$};
\draw (C) node[right]{$2$};
\draw (D) node[right]{$3$};
\draw (BC) node[below]{$\beta_{12}$};
\draw (.6,.6) node{$-\beta_{02}$};
\draw (1.3,1.5) node{$\beta_{03}$};
\end{tikzpicture}
\end{center}
\end{minipage}

\vs3

\begin{figure}[htbp]
\begin{center}
\begin{tikzpicture}
\draw(0,0) node{ 
$ 
\xymatrixrowsep{30pt}\xymatrixcolsep{50pt}
\xymatrix{
 & \bullet\ar[rr]^{[\beta_{12}]} & & \bullet\\
 \bullet\ar[ru]^{[-\beta_{01}]}\ar[rr]^(.7){[\beta_{12}]} && \bullet\ar[ru]^(.4){[-\beta_{02}]}\\
 &\bullet\ar[uu]_(.3){[\beta_{03}]}\ar[rr]^(.3){[\beta_{12}]} && \bullet\ar[uu]_(.5){[\beta_{03}]}\\
 \bullet \ar[uu]^(.5){[\beta_{03}]}\ar[ru]^{[\beta_{13}]}\ar[rr]_{[\beta_{12}]} && \bullet\ar[ru]_{[\beta_{23}]}\ar[uu]^(.7){[\beta_{03}]}
	}
$
}; 
\begin{scope}[xshift=4.2cm,yshift=2.7cm,scale=.75]
\draw (0,0) node{0} circle[radius=.4cm];
\draw (1,0) node{1} circle[radius=.4cm];
\draw (2,0) node{2} circle[radius=.4cm];
\draw (3,0) node{3} circle[radius=.4cm];
\draw (-.9,0) node{$\Omega=$};
\end{scope}
\begin{scope}[xshift=-4cm,yshift=2.7cm,scale=.75]
\draw (0,0) node{0} circle[radius=.4cm];
\draw (1,0) node{1} ;
\draw (2,0) node{2} ;
\draw (3,0) node{3} circle[radius=.4cm];
\draw[rounded corners=.3cm]  (.6,.4)--(.6,-.4)--(2.4,-.4)--(2.4,.4)--cycle;
\end{scope}
\begin{scope}[xshift=-6.5cm,yshift=1cm,scale=.75]
\draw (0,0) node{0};
\draw (1,0) node{1} ;
\draw (2,0) node{2} ;
\draw (3,0) node{3} circle[radius=.4cm];
\draw[rounded corners=.3cm]  (-.4,.4)--(-.4,-.4)--(2.4,-.4)--(2.4,.4)--cycle;
\end{scope}
\begin{scope}[xshift=-6.5cm,yshift=-2.5cm,scale=.75]
\draw (0,0) node{0};
\draw (1,0) node{1} ;
\draw (2,0) node{2} ;
\draw (3,0) node{3} ;
\draw[rounded corners=.3cm]  (-.4,.4)--(-.4,-.4)--(3.4,-.4)--(3.4,.4)--cycle;
\end{scope}
\begin{scope}[xshift=4cm,yshift=-1cm,scale=.75]
\draw (0,0) node{0} ;
\draw (1,0) node{1} circle[radius=.4cm];
\draw (2,0) node{2} circle[radius=.4cm];
\draw (3,0) node{3} ;
\draw[rounded corners=.3cm]  (-.4,1.2)--(-.4,-.4)--(.4,-.4)--(.4,.7)--(2.6,.7)--(2.6,-.4)--(3.4,-.4)--(3.4,1.2)--cycle;
\end{scope}
\begin{scope}[xshift=-1.5cm,yshift=-1.5cm,scale=.5] 
\draw (0,0) node{0} ;
\draw (1,0) node{1} ;
\draw (2,0) node{2} ;
\draw (3,0) node{3} ;
\draw[rounded corners=.2cm]  (-.4,1.2)--(-.4,-.4)--(.4,-.4)--(.4,.7)--(2.6,.7)--(2.6,-.4)--(3.4,-.4)--(3.4,1.2)--cycle;
\draw[rounded corners=.2cm]  (.6,.4)--(.6,-.4)--(2.4,-.4)--(2.4,.4)--cycle;
\end{scope}
\begin{scope}[xshift=1.7cm,yshift=-3.2cm,scale=.75]
\draw (0,0) node{0} ;
\draw (1,0) node{1} circle[radius=.4cm];
\draw (2,0) node{2} ;
\draw (3,0) node{3} ;
\draw[rounded corners=.3cm]  (-.4,1.2)--(-.4,-.4)--(.4,-.4)--(.4,.7)--(1.6,.7)--(1.6,-.4)--(3.4,-.4)--(3.4,1.2)--cycle;
\end{scope}
\begin{scope}[xshift=1.5cm,yshift=.2cm,scale=.5]
\draw (0,0) node{0} ;
\draw (1,0) node{1} circle[radius=.4cm];
\draw (2,0) node{2} ;
\draw (3,0) node{3} circle[radius=.4cm];
\draw[rounded corners=.2cm]  (-.4,1.2)--(-.4,-.4)--(.4,-.4)--(.4,.7)--(1.6,.7)--(1.6,-.4)--(2.4,-.4)--(2.4,1.2)--cycle;
\end{scope}
\end{tikzpicture}\color{blue}
\caption{Cube showing the $3!=6$ factorizations of the morphism of rank 3 given by the binary tree with edge vectors $\beta_{12},-\beta_{02},\beta_{03}$ shown in Example \ref{eg: morphism with 6 factorizations} above.}
\label{Fig: cube}
\end{center}
\end{figure}

In a cubical category, a collection of {\rank} one morphisms $f_i:X\to Y_i$, $i=1,\cdots,k$ are called \emph{s-compatible} if they form the first factors of a (unique) {\rank} $k$ morphism $X\to Z$. Similarly, a collection of {\rank} one morphisms $g_i:Y_i\to Z$, $i=1,\cdots,k$ are called \emph{t-compatible} if they form the last factors of a (unique) {\rank} $k$ morphism $X\to Z$.

\begin{prop}\label{prop: good properties of C}
Suppose that $\cC$ is a cubical category. Then the following additional properties are sufficient for the classifying space $B\cC$ to be locally $CAT(0)$ and thus a $K(\pi,1)$.
\begin{enumerate}
\item Two unequal morphisms $f\neq g:X\to Y$ between any two objects give nonhomotopic paths from $X$ to $Y$ in $B\cC$.
\item A set of {\rank} 1 morphisms $f_i:X\to Y_i$ is s-compatible if and only if it is pairwise s-compatible.
\item A set of {\rank} 1 morphisms $g_i:Y_i\to Z$ is t-compatible if and only if it is pairwise t-compatible.
\end{enumerate}
\end{prop}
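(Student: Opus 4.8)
The plan is to present $B\cC$ as a non-positively curved cube complex and then invoke Gromov's criterion (the Corollary in the Introduction). I would first observe that $B\cC$ is a cube complex: its vertices are the objects of $\cC$, its edges the rank-one morphisms, and for each morphism $f$ of rank $n$ the realization of $Fac(f)$ is the $n$-cube (cubical axiom (2)), glued in along the faces $Fac(g)$, $Fac(h)$ coming from the factorizations $f=h\circ g$; cubical axiom (3) makes these gluings embeddings of faces. Property (1) of the Proposition is used to guarantee that passing to the universal cover $\widetilde{B\cC}$ produces a genuine simply connected cube complex (distinct morphisms give distinct homotopy classes of edge paths, so $\widetilde{B\cC}$ has no bigons or more degenerate overlaps of cubes); since non-positive curvature is a local condition, it suffices to establish it for $B\cC$ itself.

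The crux is the computation of the link of an arbitrary vertex $X$. A cube of $B\cC$ containing $X$ is the realization of $Fac(h\circ g)$ for a unique pair of morphisms $g\colon A\to X$ and $h\colon X\to B$ (namely those making $(X,g,h)$ a factorization of $h\circ g$), and it has dimension $rk\,g+rk\,h$. One checks from the cubical axioms that, with the vertex $X$ fixed, $Fac(g)$ and $Fac(h)$ sit inside $Fac(h\circ g)$ as complementary faces meeting exactly at $X$, giving a canonical identification $Fac(h\circ g)\cong Fac(g)\times Fac(h)$; since links in a product of cubes are joins, the corner of this cube at $X$ is the join of the corner of $Fac(g)$ at $X$ --- the full simplex on the last factors of $g$ (the rank-one morphisms into $X$ occurring as its edges) --- with the corner of $Fac(h)$ at $X$ --- the full simplex on the first factors of $h$. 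Taking the union over all $g$ and all $h$, and using that simplicial join distributes over union, yields
\[
	\mathrm{Lk}(X)\;=\;\mathrm{Lk}^-(X)\ast\mathrm{Lk}^+(X),
\]
where $\mathrm{Lk}^+(X)$ is the complex whose vertices are the rank-one morphisms out of $X$ and whose simplices are the s-compatible sets, and $\mathrm{Lk}^-(X)$ is the dual complex built from last factors and t-compatible sets. Cubical axioms (4) and (5) are exactly what guarantees that these are simplicial complexes (a simplex is determined by its vertices) and that every subset of an s-compatible set is again s-compatible (restrict to the corresponding sub-cube of $Fac(h)$).

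Now $\mathrm{Lk}^+(X)$ is a flag complex if and only if every pairwise s-compatible set of rank-one morphisms out of $X$ is s-compatible, which is precisely hypothesis (2); dually, $\mathrm{Lk}^-(X)$ is flag if and only if (3) holds. A simplicial join $L_1\ast L_2$ is flag exactly when both $L_1$ and $L_2$ are (in the join, two vertices on the same side are adjacent iff they are adjacent there, and two vertices on opposite sides are always adjacent, so every clique of $L_1\ast L_2$ is a join of a clique of $L_1$ with a clique of $L_2$). Hence (2) and (3) together force $\mathrm{Lk}(X)$ to be a flag complex for every vertex $X$, so $B\cC$ is locally $CAT(0)$. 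Its universal cover is then a simply connected cube complex all of whose vertex links are flag, hence $CAT(0)$ by Gromov's Theorem and therefore contractible; thus $B\cC$ is a $K(\pi,1)$.

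I expect the main obstacle to be the link computation: making precise the identification $Fac(h\circ g)\cong Fac(g)\times Fac(h)$ and the induced join decomposition of the corner at $X$, handling the degenerate cases where $g$ or $h$ has rank $0$ (so one join factor is empty), and deciding how much of the cube-complex formalism to develop directly for $B\cC$ versus transporting it to $\widetilde{B\cC}$ via property (1). The remaining steps form a mechanical dictionary between the cubical-category axioms and standard facts about links in cube complexes.
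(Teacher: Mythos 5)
Your overall strategy coincides with the paper's at the level of links: the identification $Lk(X)\cong Lk_-(X)\ast Lk_+(X)$ at an object $X$, the observation that hypotheses (2) and (3) say exactly that $Lk_+(X)$ and $Lk_-(X)$ are flag, that a join of flag complexes is flag, and the appeal to Gromov's criterion are all as in the paper. The genuine gap is the step where you assert that $B\cC$ (or, via hypothesis (1), its universal cover) is ``a genuine simply connected cube complex with no bigons or more degenerate overlaps of cubes.'' Gromov's criterion is stated for cubical complexes, so one must know that the factorization cubes are glued along faces, i.e.\ that two big cubes intersect in a common face. Nothing in your argument establishes this, and hypothesis (1) does not obviously deliver it: two distinct morphisms $f\neq g:X\to Y$ of the same rank give two cubes sharing the vertex $X$, the face spanned by their common first factors, and possibly further common faces arising from common intermediate factorizations; faithfulness of the fundamental groupoid only separates the terminal vertices of lifted diagonals in the universal cover and does not control such overlaps. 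This is precisely the point the paper singles out: it states explicitly that it was \emph{not} able to prove that the big cubes of $B\cC$ intersect in faces.

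The paper's way around this is a subdivision argument that your proposal would need to replace or reproduce. Each $k$-dimensional factorization cube is cut into $2^k$ half-size cubes; using axioms (4) and (5) (determination of a morphism by its first, resp.\ last, factors) one shows that for any two small cubes $C_1,C_2$ there is a big cube $C_0$ --- the factorization cube of $h\circ g:X_0\to Y_0$, where $g$ is the morphism whose last factors are the common last factors and $h$ the morphism whose first factors are the common first factors of the relevant pieces --- which is a face of both ambient big cubes and contains $C_1\cap C_2$; hence small cubes intersect in common faces and the subdivided $B\cC$ is an honest cube complex. One must then also check flagness of the links at the \emph{new} vertices: the center $v$ of the factorization cube of a rank $k$ morphism $f:X\to Y$ has $Lk(v)\cong Lk_-(X)\ast Lk_+(Y)\ast S^0\ast\cdots\ast S^0$ (with $k$ copies of $S^0$), which is flag. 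Your link computation at the original vertices is correct and matches the paper's, but without either proving directly that the big cubes meet in faces (which the paper could not do) or carrying out a subdivision of this kind together with the extra link computation, the application of Gromov's theorem is not justified.
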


\begin{proof}
Suppose that $\cC$ is a cubical category. Then the classifying space of $B\cC=|\cN_\bullet\cC|$ is a union of cubes since every sequence of composable morphisms is contained in the factorization cube of its composition. As a simplicial complex, these cubes are triangulated. However, we will ignore this subdivision of each cube and consider the structure of $B\cC$ as a union of embedded cubes.

\underline{Claim 1}. Given Condition (1), the universal covering of $B\cC$ is a cube complex, i.e., the intersection of any two cubes is a common face.

Proof: Let $\widetilde C_1, \widetilde C_2$ be cubes in the universal cover $\widetilde{B\cC}$ of $B\cC$ which lie over the factorization cubes $C_i\subset B\cC$ of $f_i:X_i\to Y_i$. Let $\widetilde Z_1,\cdots,\widetilde Z_k$ be the vertices of $\widetilde C_1\cap \widetilde C_2$. These lie over $Z_1,\cdots,Z_k\in C_1\cap C_2$. It is clear that $\widetilde C_1\cap \widetilde C_2$ is a union of cubes. Thus, it suffices to show that there is a common face $C_0$ of $C_1, C_2$ which contains all the $Z_i$. This would imply $\widetilde C_1\cap \widetilde C_2=\widetilde C_0$, a common face.

Consider the maps $g_i:X_1\to Z_i$ and $h_i:Z_i\to Y_2$ in the factorizations of $f_1,f_2$. These lift to maps $\widetilde g_i:\widetilde X_1\to \widetilde Z_i$ and $\widetilde h_i:\widetilde Z_i\to \widetilde Y_2$ in $\widetilde C_1,\widetilde C_2$. Then the compositions $h_ig_i:X_1\to Y_2$ lift to paths $\widetilde X_1\to \widetilde Y_2$ with the same endpoints in $\widetilde{B\cC}$. So, they are homotopic in $B\cC$. By Condition (1), these morphisms $h_ig_i:X_1\to Y_2$ are equal to the same morphism $f_3:X_1\to Y_2$. Let $C_3=Fac(f_3)$. 

The first factors of the $g_i:X_1\to Z_i$ together determine a morphism $f_1':X_1\to Y_1'$ which is part of the factorization of both $f_1$ and $f_3$. So $C_1'=Fac(f_1')$ is a common face of $C_1,C_3$ which contains all the $Z_i$. Similarly, the last factors of $h_i:Z_i\to Y_2$ together give a morphism $f_2':X_2'\to Y_2$ factoring both $f_2,f_3$. So, $C_2'=Fac(f_2')$ is a common face of $C_2,C_3$ containing all the $Z_i$. Then $C_1'\cap C_2'$ is a common face of $C_1,C_2,C_3$ containing all the $Z_i$ and we are done.

By Gromov's Theorem \ref{thm of Gromov}, it now suffices to show that the link of ever vertex of $B\cC$ is flag. For any object $X$ in $\cC$ we define the \emph{forward link} $Lk_+(X)$ of $X$ to be the simplicial complex whose vertices are all rank one morphisms with source $X$ so that a collection of such morphisms forms a simplex if they form the first factors of some morphism in $\cC$. 

By Condition (2), $Lk_+(X)$ is a flag complex for all objects $X\in \cC$. The \emph{backward link} $Lk_-(X)$ of $X$ is defined analogously. By Condition (3), $Lk_-(X)$ is also a flag complex.

Finally, we observe that, for any object $X$ in $\cC$, the link of the vertex $X$ in $B\cC$ is isomorphic as a simplicial complex to the join of the forward link of $X$ and its backward link. Consequently, it is a flag complex. So, the universal cover $\widetilde{B\cC}$ of $B\cC$ is a $CAT(0)$ cubical complex as claimed.
\end{proof}

\begin{rem}
We have already shown that $\cN\cP(n)$ satisfies Condition (3). Indeed, when a morphism $[T]:\cP\to\cQ$ of rank, say $k$, is decomposed into a product of rank one morphisms, the last morphism $\cR\to\cQ$ is, by definition, given by one of the $k$ elements of $T\subseteq E(\cQ)$. By Theorem \ref{thm1: morphisms are maximal compatible sets of edge vectors}, such elements of $E(\cQ)$ are compatible if and only if they are pairwise compatible.
\end{rem}

We restate condition (1) in Proposition \ref{prop: good properties of C} in a more convenient format. 

\begin{defn}\label{def: faithful group functor} We define a \emph{faithful group functor} on a category $\cC$ to be a faithful functor $\varphi:\cC\to G$ where $G$ is a group considered as a groupoid with one object.
\end{defn}

To help clarify the definition we note that a {functor} $\varphi:\cC\to G$ assigns a group element $\varphi(f)\in G$ to every morphism $f:X\to Y$ in $\cC$ so that $\varphi(fh)=\varphi(f)\varphi(h)$ for any pair of composable morphisms in $\cC$. This functor is \emph{faithful} if $f\neq h$ implies $\varphi(f)\neq \varphi(h)$. {\color{blue}This is directly related to Condition (1) in Proposition \ref{prop: good properties of C} since a group functor $\varphi:\cC\to G$ will assign an element of $G$ to any path in the category. For example, take the path $X\xrightarrow{f} Y \xleftarrow{g}Z\xrightarrow h W$. We can assign to this path the group element $\varphi(h)\varphi(g)^{-1}\varphi(f)$. Any two homotopic paths will have the same associated group element. So, $f\neq g:X\to Y$ cannot be homotopic since, if they were, they must have the same group element attached $\varphi(f)=\varphi(g)$ contradicting faithfulness of $\varphi$. Thus, the existence of a faithful group functor on $\cC$ implies that $\cC$ satisfies Condition (1). In Proposition \ref{prop: condition (1) is equivalent to group functor} below we show the stronger statement that Condition (1) is equivalent to the existence of a faithful group functor on $\cC$.}

Since a connected groupoid is equivalent to a groupoid with one object, the existence of a faithful group functor on $\cC$ is equivalent to the existence of a faithful functor of $\cC$ into a connected groupoid.

\begin{prop}\label{prop: condition (1) is equivalent to group functor}
A small connected category $\cC$ satisfies Condition (1) in Proposition \ref{prop: good properties of C} if and only if it admits a faithful group functor $\varphi:\cC\to G$ for some group $G$.
\end{prop}

\begin{proof}
We recall that the \emph{fundamental groupoid} of $\cC$ is another category $\pi_1\cC$ with the same object set as $\cC$ but where morphisms from $X$ to $Y$ are defined to be homotopy classes of paths from $X$ to $Y$ in the classifying space $B\cC$. Since any morphism $f:X\to Y$ gives a path $X\to Y$ whose homotopy class is a morphism $\pi_1 f:X\to Y$ in $\pi_1\cC$, there is a functor 
$	\pi_1:\cC\to\pi_1\cC$. 
Furthermore, this functor is universal among all functors of $\cC$ into all groupoids. Condition (1) in Proposition \ref{prop: good properties of C} is clearly equivalent to the statement that this functor is faithful. As noted above, the existence of a faithful group functor on $\cC$ is equivalent to the existence of a faithful functor of $\cC$ into a connected groupoid. {\color{blue}So, Condition (1) is equivalent to the existence of a faithful group functor on $\cC$.}
\end{proof}

\subsection{Representation of $\cN\cP(n)$}

There is a ``standard representation'' of the category $\cN\cP(n)$ given as follows. Let $U_n(\ZZ)$ be the group of $n\times n$ unipotent upper triangular matrices with integer entries. 

\begin{prop}\label{existence of matrix gT} For every morphism $[T]:\cP\to\cQ$ between any two objects of $\cN\cP(n)$, there is a matrix
$
	\varphi[T]\in U_n(\ZZ)
$
with the following properties.
\begin{enumerate}
\item $\varphi([T]\circ[S])=\varphi[S]\varphi[T]$. {\color{blue}The order reversal implies $\varphi[T]^{-1}$ is a group functor. We say that $\varphi$ is a ``contravariant group functor''.}
\item If $[S]\neq[T]:\cP\to\cQ$ then $\varphi[T]\neq \varphi[S]$.
\end{enumerate}
\end{prop}

\begin{cor} $\varphi:\cN\cP(n)\to U_n(\ZZ)$ is a faithful (contravariant) group functor and, therefore, $\cN\cP(n)$ satisfies Condition (1) of Proposition \ref{prop: good properties of C}.\qed
\end{cor}

The remainder of this subsection is devoted to the proof of Proposition \ref{existence of matrix gT}.

 \begin{defn}
 Recall that a morphism $[T]:\cP\to\cQ$ is given by a rooted tree $T_\alpha$ for every part $W_\alpha\in \cP$. This gives a partial ordering on the subset $\cQ_\alpha\subseteq\cQ$ of all parts which lie in $W_\alpha$. Let $\varphi[T]\in U_n(\ZZ)$ be given by  {\color{blue}letting $\varphi_{ij}[T]$, the $ij$-entry of the matrix $\varphi[T]$, be equal to 1} if either $v_i=v_j$ or all the following hold.
 \begin{enumerate}
 \item $v_i,v_j$ lie in the same part of $\cP$, say $v_i,v_j\in W_\alpha$, but in different parts $X,Y$ of $\cQ_\alpha$,
 \item $i< j$ and $X<Y$ in the order given by the tree $T_\alpha$.
 \item $j$ is minimal so that $i<j$ and $v_j\in Y$
 \end{enumerate}
  and $\varphi_{ij}[T]=0$ otherwise.
 \end{defn}
 
Since $X,Y$ are noncrossing it follows that, if $\varphi_{ij}[T]=1$ and $i\neq j$, then $\varphi_{i'j}[T]=1$ for all other $v_{i'}\in X$, the part of $\cQ$ containing $v_i$.
 
 \begin{eg}
 Let $\cP,\cQ$ be the following partitions of 5: $\cP=\{V\}$, $\cQ=\{X,Y\}$ where $X=\{v_2,v_3\}$, $Y=\{v_1,v_4,v_5\}$ {\color{blue}and $V=X\cup Y$.} Then there is only one morphism $[S]:\cP\to\cQ$ given by the tree on $\cQ$ with root $Y$. Then
 \[
 	\varphi[S]=\mat{
	1&0&0&0&0\\
	&1 &0& 1&0\\
	&&1 &1&0\\
	&&&1&0\\
	&&&&1
	}
 \]
 The nonzero nondiagonal entries are $\varphi_{24}=\varphi_{34}=1$ since $j=4$ is the smallest index of any element of $Y$ to the right of $X$. Thus left multiplication by $\varphi[S]$ is the row operation which adds Row 4 to Row 2 and to Row 3. Let $\Omega$ be the noncrossing partition of $V$ into its five individual elements. Let $[T]:\cQ\to\Omega$ be the morphism given by the binary trees on $V_\alpha=\{2,3\}$ and $V_\beta=\{1,4,5\}$ given by $v_2<v_3$ and $v_1<v_4<v_5$. Then
 \[
  	\varphi[T]=\mat{
	1&0&0&1&1\\
	&1 &1& 0&0\\
	&&1 &0&0\\
	&&&1&1\\
	&&&&1
	},\qquad 	
	  	\varphi[S]\,\varphi[T]=\mat{
	1&0&0&1&1\\
	&1 &1& 1&1\\
	&&1 &1&1\\
	&&&1&1\\
	&&&&1
	}
 \]
 \begin{minipage}{0.6\textwidth}

The composition $[T]\circ [S]=[T\cup\mu_TS]$ is given by adding the edge $-\beta_{13}=v_1-v_3$ to $T$ shown on the right. The partial ordering of vertices given by this new tree is
$
 	v_2<v_3<v_1<v_4<v_5
 $
 giving the matrix $\varphi[S]\varphi[T]$ as required {\color{blue}making $\varphi$ a contravariant group functor.}

\end{minipage}
\hfill
\begin{minipage}{0.4\textwidth}

\begin{center}
\begin{tikzpicture}
\coordinate (E) at (4,1.72);

\coordinate (A) at (0,1);
\coordinate (B) at (1,0);
\coordinate (C) at (2,.5);
\coordinate (BC) at (1.7,.25);
\coordinate (D) at (3,1.54);
\draw[thick] (B)--(C)--(A)--(D)--(E);
\foreach \x/\xtext in {A/1,B/2,C/3,D/4,E/5}
\draw[fill] (\x) circle[radius=2pt];
\draw (A) node[left]{$1$};
\draw (B) node[left]{$2$};
\draw (C) node[right]{$3$};
\draw (D) node[below]{$4$};
\draw (E) node[below]{$5$};
\draw (.6,.6) node{$-\beta_{13}$};
\draw (-.4,1.7) node{$T:$};
\end{tikzpicture}
\end{center}
\end{minipage}
  \end{eg}
 
 \begin{lem}\label{lem: U is faithful}
 A morphism $[T]:\cP\to\cQ$ is uniquely determined by $\cP,\cQ$ and the matrix $\varphi[T]$.
 \end{lem}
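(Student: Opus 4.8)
The plan is to show that the matrix $g[T]$ records exactly the data needed to reconstruct the rooted forest $T$, so that no two distinct cluster morphisms $\cP\to\cQ$ can have the same matrix. First I would observe that the pair $(\cP,\cQ)$ already determines, for each part $W_\alpha\in\cP$, the noncrossing partition $\cQ_\alpha=\iota^{-1}(W_\alpha)$ of $W_\alpha$, hence determines which subsets of $\cQ_\alpha$ are parallel sets and which part covers each nonmaximal parallel set. Thus the only thing left to recover from $g[T]$ is, for each parallel set $P\subseteq\cQ_\alpha$, the binary tree $T_P$ on $P$ (equivalently, by Remark~\ref{rem: rooted tree generated by F}, the whole rooted tree $T_\alpha$ on $\cQ_\alpha$). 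So it suffices to reconstruct the partial order ``$X<Y$ in $T_\alpha$'' from the entries $g_{ij}[T]$.

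Next I would extract from $g[T]$ the covering relations of the rooted tree $T_\alpha$. The definition says $g_{ij}[T]=1$ (for $i\ne j$) precisely when $v_i\in X$, $v_j\in Y$ with $X,Y\in\cQ_\alpha$ distinct, $i<j$, $X<Y$ in $T_\alpha$, and $j$ is the minimal index of an element of $Y$ to the right of $i$. The remark immediately following the definition notes that if $g_{ij}[T]=1$ then $g_{i'j}[T]=1$ for every $v_{i'}$ in the same part $X$ of $\cQ$; so the support of the ``1-entries'' in a fixed column $j$, intersected with a part $X\in\cQ_\alpha$, is either all of $X$ or empty. From the pattern of $1$'s I can therefore read off an explicit relation on parts: declare $X\prec Y$ if there exist $v_i\in X$, $v_j\in Y$ with $g_{ij}[T]=1$ (and $i<j$). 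The key claim is that $X\prec Y$ holds if and only if $X<Y$ in the tree order $T_\alpha$ \emph{and} $X,Y$ lie in a common parallel set with $X$ immediately to the left of $Y$ among the children-side, i.e.\ $X\prec Y$ detects exactly the ``adjacent in the lateral order'' pairs related by $T_\alpha$. Since $(\cP,\cQ)$ tells us the lateral order and the parallel-set structure, knowing for each laterally-adjacent parallel pair which of the two is smaller in $T_\alpha$ is exactly the data of the binary tree $T_P$ on that parallel set $P$: a binary tree on a totally ordered set is determined by, for each consecutive pair, which one is the ancestor — this is the standard fact that binary search tree shape is determined by the comparison outcomes on adjacent keys, and follows here from condition~(1) in the definition of binary tree (no edge $v_j-v_i$ with an intermediate $v_k\ge v_i$). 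Running this over all parallel sets in all $\cQ_\alpha$ recovers every $T_\alpha$, hence recovers $[T]$.

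The main obstacle, and the step I would write most carefully, is the claim that the $\prec$ relation read off from $g[T]$ coincides with lateral-adjacency-plus-tree-order, rather than giving spurious or missing relations. Two things must be checked: (i) \emph{no spurious relations} — if $g_{ij}[T]=1$ with $v_i\in X\in\cQ_\alpha$, $v_j\in Y$, then $X,Y$ must in fact be laterally adjacent in their parallel set, which uses that $X$ and $Y$ are noncrossing and that $j$ was chosen minimal to the right of $i$: any part $X'$ of $\cQ$ strictly between them in the lateral order would have an element with index in $(i,j)$, contradicting minimality unless that element belongs to $Y$ (impossible since indices of $Y$ between $i$ and $j$ would also contradict minimality) — here one must also rule out $X'$ nested above or below using the noncrossing condition, exactly the kind of case analysis carried out in the ``adjacent iff parallel or covering'' proposition; and (ii) \emph{no missing relations} — for laterally adjacent $X,Y$ in a parallel set with $X<Y$ in $T_\alpha$, there really is some $i<j$ with $v_i\in X$, $v_j\in Y$ and $j$ minimal, which is immediate by taking $j$ to be the least index of $Y$ lying to the right of $\max X$ and $i=\max X$ (noncrossingness of $X$ with everything between forces this $j$ to witness $g_{ij}=1$). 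Once (i) and (ii) are in hand, the reconstruction is forced and the lemma follows; I would close by remarking that Property~(2) of Proposition~\ref{existence of matrix g[T]} is an immediate consequence.
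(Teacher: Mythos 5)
There is a genuine gap, in two places. First, your claim (i) --- that $g_{ij}[T]=1$ forces the parts $X\ni v_i$, $Y\ni v_j$ to be laterally adjacent in a common parallel set --- is false. The definition of $g[T]$ only requires $X<Y$ in the partial order generated by the whole rooted tree $T_\alpha$ (plus minimality of $j$); it is not restricted to laterally adjacent pairs, nor even to pairs lying in a common parallel set (covering relations and relations obtained by transitivity also produce $1$'s). The paper's own worked example already contradicts (i): for $[T]:\cQ\to\Omega$ with the tree $v_1<v_4<v_5$ on the singletons $\{v_1\},\{v_4\},\{v_5\}$, one has $g_{15}[T]=1$ even though $\{v_1\}$ and $\{v_5\}$ are separated by $\{v_4\}$ in the lateral order. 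Second, and more fatally, the ``standard fact'' you invoke --- that a binary tree on a totally ordered set is determined by knowing, for each consecutive pair of vertices, which of the two is the ancestor --- is false. On $V=\{v_1,v_2,v_3\}$ the two binary trees with edge sets $\{v_1-v_3,\;v_3-v_2\}$ (root $v_1$, tree order $v_2<v_3<v_1$) and $\{v_3-v_1,\;v_1-v_2\}$ (root $v_3$, tree order $v_2<v_1<v_3$) give identical consecutive-pair data ($v_1$ above $v_2$, $v_3$ above $v_2$); more generally there are a Catalan number of binary trees but only $2^{n-1}$ consecutive-comparison patterns. So even if (i) were repaired, the data you propose to read off from $g[T]$ cannot determine $[T]$; note that the two trees above do have different matrices (only the second has $g_{13}=1$), so it is precisely the non-adjacent entries you discard that carry the missing information.

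The correct reconstruction must use those entries. The paper's argument restricts, for each parallel set $\cU=\{X_1,\dots,X_m\}$ inside a part of $\cP$, to the entries $g_{ij}[T]$ indexed by the leftmost elements of the $X_i$, identifies the root of the binary tree on $\cU$ directly from the matrix (the largest $k$ such that $g_{ik}[T]=1$ for all $i<k$), deletes it, and recurses on the two index intervals to the left and right of $k$ to recover the two subtrees; your observation that $(\cP,\cQ)$ determines the parallel-set structure and that it suffices to recover each binary tree is fine and matches the paper's reduction. If you prefer to phrase things via a relation $\prec$ extracted from $g[T]$, the statement you actually need is that the tree is determined by knowing, for \emph{every} laterally ordered pair $X$ to the left of $Y$ in a parallel set, whether $X$ lies below $Y$ in the tree order --- and that statement itself requires an argument, e.g.\ the root-and-recurse induction just described, not a reduction to adjacent pairs.
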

 
 \begin{proof}
 The tree $T$ is a union of augmented binary trees on relative parallel sets. Since the augmentation of a binary tree is unique, it suffices to determine the partial ordering and thus the binary tree on every parallel set in $\cP$ relative to $\cQ$. So, let $\cU=\{X_1,\cdots,X_n\}$ be a parallel set in $\cQ_\alpha\subseteq \cQ$, the union of parts in $W_\alpha\in\cP$. For each $i$, let $v_i$ be the leftmost element of $X_i$ and consider only the entries $\varphi_{ij}[T]$ corresponding to the $n$ points $v_i,v_j$.\vs2
 
 \underline{Claim:} The matrix $\varphi[T]$ determines the root $X_k$ of $\cU$.\vs2
 
 Proof: Take $k$ maximal so that $\varphi_{ik}=1$ for all $i<k$. For any $j>k$ we have $\varphi_{jk}=0$. So, this holds only when $X_k$ is the root.\vs2
 
 By induction on $n$, the submatrices $\varphi_{ij}[T]$ for $i,j<k$ and $\varphi_{ij}[T]$ for $i,j>k$ uniquely determine the subtrees of the tree on $\cU$ given by deleting the root. Therefore, the matrix determines the entire binary tree on $\cU$ and therefore the entire rooted tree $T$.
 \end{proof}

 \begin{proof}[Proof of Proposition \ref{existence of matrix gT}] Lemma \ref{lem: U is faithful} proves Property (2). To prove Property (1), we first reduce it to the case when $S$ has only one element. Indeed, given composable morphisms
 \[
 	\cQ\xrarrow{[S]} \cR\xrarrow{[T]} \cS
 \]
where $S$ has more than one element, $[S]$ can be written as a composition of two shorter morphisms $[S]=[S_1]\circ[S_2]$. Then, by induction on $|S|$ we have
\[
	\varphi([T]\circ[S])=\varphi([T]\circ[S_1]\circ[S_2])=\varphi[S_2]\varphi([T]\circ[S_1])=\varphi[S_2]\varphi[S_1]\varphi[T]=\varphi[S]\varphi[T]
\]
So, it is enough to prove the base case when $S$ has one element, say $S=Y-X$. There are three cases. Either $X,Y$ are parallel with $Y$ to the left or right of $X$ or $Y$ covers $X$. 

In all cases we have $[T]\circ[S]=[T\cup \mu_TS]$ where $\mu_TS=Z-R\in E_T(\cS,\cQ)$ where $R$ is the root of the rooted tree $T_X$ given by restricting $T$ to the parts of $\cS$ which lie in $X\in\cR$ and $Z$ is a part of $\cS$ which lies in $Y$ and which depends on the case.
\vs2

\underline{Case 1}: Suppose $X,Y$ are parallel with $Y$ to the left of $X$. 

In this case $\varphi[S]=I_n$ is the identity matrix. The addition of the edge $\mu_TS$ to $T$ raises some parts in $Y$ above all parts of $X$. However, since $Y$ is to the left of $X$, this has no effect on the matrix. So,
\[
	\varphi[T\cup \mu_TS]=\varphi[T]=\varphi[S]\varphi[T].
\]

\underline{Case 2}: Suppose that $X,Y$ are parallel with $Y$ to the right of $X$.

Let $v_j$ be the leftmost element of $Y$. Then, by definition, $\varphi[S]$ is the identity matrix except for the $j$th column where $\varphi_{ij}[S]=1$ if and only if either $i=j$ or $v_i\in X$. The new edge $\mu_TS=Z-R$ goes from the root $R$ of $T_X$ to the leftmost part $Z\in\cS$ of $Y$. When this new edge is added, every part of $T_X$ becomes less than $Z$ (this makes $\varphi_{ij}=1$ for all $v_i\in X$) and thus, by transitivity, less than any other part which is greater than $Z$. In other words, $\varphi[T\cup\mu_TS]$ is obtained from $\varphi[T]$ by adding its $j$th row to the rows corresponding to the elements of $X$. So, $\varphi[T\cup \mu_TS]=\varphi[T]=\varphi[S]\varphi[T]$ in this case as well.\vs2

\underline{Case 3}: Suppose $Y$ covers $X$.

In this case $Z$ is the part of $\cS$ which contains the leftmost element $v_j$ of $Y$ to the right of $X$. As in the other cases, $\varphi[S]$ is equal to the identity matrix except in Column $j$ where $\varphi_{ij}[S]=1$ iff either $i=j$ or $v_i\in X$. Again, the addition of $\mu_TS=Z-R$ makes $Z$ greater than all parts of $X$ in $\cR$ and thus changes $\varphi[T]$ by adding Row $j$ to Row $i$ for every $v_i\in X$ which is the same as left multiplication by $\varphi[S]$.\vs2

This proves Property (1) in all three cases and completes the proof of Proposition  \ref{existence of matrix gT}.
 \end{proof}
 
\subsection{$\cN\cP(n)$ is cubical}
 
Let $[T]:\cP\to\cQ$ be a cluster morphism of rank $k$.

\begin{lem}
The factorization category $Fac[T]$ is isomorphic to the cube $\cI^k$ and the forgetful functor $Fac[T]\to\cN\cP(n)$ is an embedding.
\end{lem}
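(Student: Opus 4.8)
The plan is to identify factorizations of $[T]:\cP\to\cQ$ with subsets of the edge set $T$, via the observation that $T\subseteq E(\cQ,\cP)$ is a basis for $\ker(\pi_\ast^{\cP\cQ})$ consisting of $k$ pairwise compatible edge vectors. First I would set up the candidate functor $\Phi:\cI^k\to\operatorname{Fac}[T]$. Order the elements of $T=\{E_1,\dots,E_k\}$. Given a $0/1$ vector $x\in\cI^k$, let $T_x=\{E_i : x_i=1\}$; since any subset of a pairwise-compatible set is pairwise compatible (using Theorem~\ref{thm1: morphisms are maximal compatible sets of edge vectors}), $T_x$ is a compatible set of edges, hence contained in a unique object of $\operatorname{Fac}[T]$: namely the intermediate partition $\cR_x$ obtained from $\cQ$ by merging along the edges in $T_x$ (more precisely, $\cR_x$ is the coarsest common refinement recorded by the kernel data — $\pi^{\cR_x\cQ}_\ast:\ZZ\cR_x\to\ZZ\cQ$ has kernel spanned by $T_x$), together with the two cluster morphisms $\cP\to\cR_x$ and $\cR_x\to\cQ$. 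The first morphism $[S_x]:\cP\to\cR_x$ is the one whose edge set maps, under $\sigma$ applied to the leftover edges, to a maximal compatible set; the second is $[\sigma^{-1}(T_x)]$ restricted appropriately. Using Theorem~\ref{thm2: bijection ET to E} and the associativity computation already carried out in Section~1, one checks $[T] = (\cR_x\to\cQ)\circ(\cP\to\cR_x)$, so $\Phi(x)$ is a genuine factorization, and $x\le y$ in $\cI^k$ (i.e. $T_x\subseteq T_y$) induces a unique refinement morphism $\cR_x\to\cR_y$ compatible with both factors, giving functoriality.

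Next I would show $\Phi$ is an isomorphism of categories. For essential surjectivity: given any factorization $\cP\xrarrow{g}\cR\xrarrow{h}\cQ$ of $[T]$, the edge set of $h$ is a compatible subset $T'$ of $E(\cQ,\cP)$ with $\pi^{\cR\cQ}_\ast$ having kernel spanned by $T'$; since $h$ composed with $g$ gives $[T]$, and using that the edges of $[T]$ form a basis for $\ker\pi^{\cP\cQ}_\ast \supseteq \ker\pi^{\cR\cQ}_\ast$ together with Theorem~\ref{thm2: bijection ET to E}, one deduces $T'\subseteq T$, so $T' = T_x$ for the corresponding $x$, whence $\cR\cong\cR_x$ and the factorization is (isomorphic to) $\Phi(x)$. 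For fullness and faithfulness on morphisms: a morphism $\Phi(x)\to\Phi(y)$ in $\operatorname{Fac}[T]$ is a cluster morphism $\phi:\cR_x\to\cR_y$ making the two triangles commute; commutativity with $h_x = h_y\circ\phi$ forces the edge set of $\phi$ to be $T_y\setminus T_x$ (transported by the relevant $\sigma$), which exists and is unique precisely when $T_x\subseteq T_y$ — matching $\operatorname{Hom}_{\cI^k}(x,y)$ being a point when $x\le y$ and empty otherwise. This uses only that compatible subsets determine their cluster morphism uniquely, which is Theorem~\ref{thm1: morphisms are maximal compatible sets of edge vectors}.

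Finally, the forgetful functor $\operatorname{Fac}[T]\to\cN\cP(n)$, $(C,g,h)\mapsto C$, is an embedding: on objects it is injective because distinct $x\ne y$ give partitions $\cR_x\ne\cR_y$ — indeed $\cR_x$ is determined by $\ker\pi^{\cR_x\cQ}_\ast = \operatorname{span}(T_x)$ and distinct subsets of the basis $T$ span distinct subgroups; on morphisms it is injective because between $\Phi(x)$ and $\Phi(y)$ there is at most one arrow in $\operatorname{Fac}[T]$, so there is nothing to collapse. The main obstacle I anticipate is the bookkeeping in essential surjectivity: showing that the edge set of an \emph{arbitrary} last-factor $h:\cR\to\cQ$ of $[T]$ must consist of edges drawn from $T$ itself rather than merely some compatible set with the same span. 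This is exactly where Theorem~\ref{thm2: bijection ET to E} is needed — it identifies $E_T(\cS,\cQ)$ with $E(\cR,\cQ)$ compatibly with the compatibility relation — and the cleanest route is probably induction on $k$ mirroring the associativity argument of Section~1, peeling off one edge $E_1\in T$ at a time via the partition $\cP'$ obtained by merging along $E_1$, exactly as in the proof of Theorem~\ref{thm2: bijection ET to E}.
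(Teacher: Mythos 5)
Your argument is correct and is essentially the paper's own proof: factorizations of $[T]:\cP\to\cQ$ correspond to subsets $T_2\subseteq T$ (the last factor being $[T_2]$ itself and the first factor the uniquely determined $[\sigma_{T_2}^{-1}(T\setminus T_2)]$), distinct subsets give distinct intermediate partitions since they merge different pairs of parts of $\cQ$, and morphisms in $Fac[T]$ exist and are unique exactly according to containment of these subsets. Two small corrections: the correspondence is contravariant in the subset (if $T_x\subseteq T_y$ the unique morphism goes $\cR_y\to\cR_x$, coarse to fine, so your functor is oriented backwards, though this is harmless because $\cI^k\cong(\cI^k)\op$), and the step you flag as the main obstacle is immediate rather than needing induction via Theorem \ref{thm2: bijection ET to E} --- since the composite is $[T_2\cup\sigma_{T_2}(T_1)]=[T]$ and $\sigma_{T_2}(T_1)\subseteq E_{T_2}(\cQ,\cP)$ is by definition disjoint from $T_2$, the edge set of any last factor is literally a subset of $T$ (no issue of ``same span'' arises), and the last factor is just $[T_x]$ with no $\sigma^{-1}$ applied.
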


\begin{proof} Let $T=\{E_1,\cdots,E_k\}$. Then any factorization of $[T]$ has the form 
\[
\cP\xrarrow{[T_1]}\cR\xrarrow{[T_2]}\cQ
\]
where $T_2\subseteq T$ and $T_1$ is uniquely determined by $T_2$ since $T=T_2\cup\mu_{T_2}(T_1)$ and $\mu_{T_1}$ is a bijection. Furthermore, there exists a morphism from $\cP\to\cR\to \cQ$ to \[
\cP\xrarrow{[T_1']}\cR'\xrarrow{[T_2']}\cQ
\]
 if and only if $T_2'\subseteq T_2$. And this morphism $\cR\to\cR'$ is unique since it is $\mu_{T_2'}^{-1}$ of the complement of $T_2'$ in $T_2$.
 
 The partitions $\cR$ corresponding to subsets $T_2$ of $T$ are also uniquely determined since they are given by merging the corresponding pairs of parts of $\cQ$ together. Pairs not in $T_2$ are not merged and thus lie in separate parts of $\cR$. So, different $T_2$ give different $\cR$.
 
 This description of $Fac[T]$ proves the lemma.
 \end{proof}

 We now consider first factors of cluster morphisms. Let $\{V\}$ be the partition of $V=\{v_1,v_2,\cdots,v_n\}$ into one part. All possible noncrossing partitions of $V$ into two parts are given by $\cP_{ij}=\{X_{ij}, Y_{ij}\}$ where
 \[
 	X_{ij}=\{v_{i+1},\cdots,v_j\},\quad 0\le i<j< n
 \]
 and {\color{blue}$Y_{ij}=V\backslash X_{ij}=\{v_1,v_2,\cdots,v_{i-1},v_{j+1},\cdots,v_n\}$ is the complement of $X_{ij}$ in $V$.}
 
 \begin{defn}
 Let $\cC(V)$ be the set of all morphisms of rank one:
 \[
 	[M_{ij}]: \{V\}\to \cP_{ij}, 0\le i<j<n
 \]
 given by $M_{ij}=Y_{ij}-X_{ij}$ and 
 \[
 	[\overline P_j]:\{V\}\to \cP_{0j},0<j<n
 \]
 given by $\overline P_j=X_{0j}-Y_{0j}$.
 \end{defn}
 
 \begin{rem}
 These correspond to the objects of the cluster category of type $A_{n-1}$ with straight orientation. Rank one morphisms are compatible if and only if the corresponding objects in the cluster category do not extend each other. Maximal pairwise compatible sets have $n-1$ elements and form what are called ``cluster tilting objects'' in the cluster category. Thus, by definition, they are given by a pairwise compatibility condition. See \cite{BMRRT}, \cite{CCS} for details.
 \end{rem}
 
 \begin{thm}\begin{enumerate}
 \item The set $\cC(V)$ is the set of all morphisms from $\{V\}$ to noncrossing partitions with two parts. 
 \item The morphism $[\overline P_k]$ is compatible with $[M_{ij}]$ for $k\notin (i,j]$ and with all other $[\overline P_\ell]$. 
 \item $[M_{ij}],[M_{k\ell}]$ are compatible if and only if they are noncrossing, i.e., the intervals $(i,j], (k,\ell]$ are either disjoint or one contains the other. 
 \item A collection of elements of the set $\cC(V)$ form the set of first factors of a unique cluster morphism $\{V\}\to\cP$ if and only if they are pairwise compatible.
 \end{enumerate}
 \end{thm}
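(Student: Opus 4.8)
The plan is to derive all four assertions from the binary–tree combinatorics of Section~2: Theorem~\ref{thm1: morphisms are maximal compatible sets of edge vectors}, Theorem~\ref{thm: maximal compatible sets are rooted binary trees} together with its corollary, and the decomposition of $E(\cS,\cR)$ into blocks isomorphic to the sets $G(P)$ of parallel sets $P$. First I would pin down the list $\cC(V)$. A morphism $\{V\}\to\cQ$ has rank $rk\,\{V\}-rk\,\cQ$, so it has rank one exactly when $\cQ$ has two parts, and such a morphism is a binary forest on $\cQ$ regarded as a noncrossing partition of $V$. In a two–part noncrossing partition one part must be an interval $X_{ij}=\{v_{i+1},\dots,v_j\}$ and the other $Y_{ij}=V\setminus X_{ij}$, for otherwise a crossing $a<b<c<d$ is forced. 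If $i>0$ then $Y_{ij}$ covers $X_{ij}$, the nonmaximal parallel set $\{X_{ij}\}$ admits only the trivial tree, and the edge to the covering part is forced, giving the unique morphism $[M_{ij}]$; if $i=0$ both parts are maximal, $\{X_{0j},Y_{0j}\}$ is a single parallel set of size two carrying exactly two binary trees (rooted at $Y_{0j}$, at $X_{0j}$), giving $[M_{0j}]$ and $[\overline P_j]$. This is the first assertion.

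Next I would compute the pairwise compatibilities. Two rank-one morphisms with source $\{V\}$ are $s$-compatible exactly when they appear together as first factors of a rank-two morphism $\{V\}\to\cR$ with $|\cR|=3$; since $\cR$ must then refine the targets of both, I would list the (few) three–part partitions that do this and, via the block decomposition of $E(\cR)$, reduce to whether the two defining vectors lie in different blocks (always compatible) or in a common block $G(P)$ with $|P|\le3$, where compatibility is read off from Theorem~\ref{thm: maximal compatible sets are rooted binary trees} and Corollary~\ref{cor: parts compatible with a root vector}. The $[\overline P_j]$-versus-$[M_{0j}]$ dichotomy at $\cP_{0j}$ is precisely the choice of root for a two–element parallel set, so $[\overline P_k]$ behaves like a root vector: ``a root vector is compatible with an edge iff the root lies outside the support of the edge'' becomes ``$[\overline P_k]$ is compatible with $[M_{ij}]$ iff $k\notin(i,j]$,'' two root vectors are always compatible, and the noncrossing clause of Theorem~\ref{thm: maximal compatible sets are rooted binary trees}, applied to the pieces involved, yields the ``disjoint or nested'' criterion for $[M_{ij}],[M_{k\ell}]$. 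This gives the middle two assertions.

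For the last assertion I would argue both directions. If $\{f_1,\dots,f_k\}$ are the first factors of $[T]\colon\{V\}\to\cP$, then any two $f_a,f_b$ span a $2$-face of the cube $Fac[T]\cong\cI^k$, i.e.\ a rank-two morphism having $f_a,f_b$ as its first factors, so the set is pairwise $s$-compatible; and $[T]$ is recovered from this set since, by Theorem~\ref{thm1: morphisms are maximal compatible sets of edge vectors}, $[T]$ is determined by $T$ and the elements of $T$ are in bijection with the first factors. Conversely, I would translate the compatibility criteria just found into the statement that the intervals attached to $f_1,\dots,f_k$ (each $[M_{ij}]$ contributing $X_{ij}$, each $[\overline P_k]$ the prefix $\{v_1,\dots,v_k\}$) form a laminar family matching the prefix decorations; laminarity being a condition on pairs, pairwise $s$-compatibility makes the whole family laminar, and splitting $V$ along it produces a noncrossing partition $\cP$ with $|\cP|=k+1$ together with the binary forest whose parallel/covering structure is dictated by the nesting — which is exactly what Theorem~\ref{thm: maximal compatible sets are rooted binary trees} guarantees. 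Its first factors are $f_1,\dots,f_k$, and uniqueness of $\cP$ and $[T]$ follows as above.

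I expect the converse step — the flag property of $\cC(V)$ — to be where the care is needed: one must set up precisely the dictionary between elements of $\cC(V)$, decorated intervals, and first factors of binary trees on $V$, because a single element of $\cC(V)$ does not by itself name a fixed vector of $G(V)$, and only once this dictionary is in place does the statement reduce cleanly to Theorem~\ref{thm: maximal compatible sets are rooted binary trees} (equivalently, to the classical fact that pairwise noncrossing diagonals of a polygon extend to a triangulation). The remaining bookkeeping — tracking orientations, the $\overline P$ versus $M$ cases, and which intervals are prefixes — is routine.
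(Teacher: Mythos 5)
Your enumeration of $\cC(V)$ is correct, and your reading of the cube structure (the first factor attached to an edge $E$ of $T$ is $[\pi_\ast(E)]$ after collapsing along $T\setminus\{E\}$) is the right framework; note that the paper itself writes no proof here, deferring to known cluster combinatorics, so a genuine argument of the kind you sketch is exactly what is wanted. The gap is in your middle step and it propagates into your final laminar-family step. Compatibility of first factors ($s$-compatibility) is not decided by where ``the two defining vectors'' sit in the block decomposition of $E(\cR)$ for the common refinement $\cR$: that criterion (different blocks always compatible, etc.) is Theorem~\ref{thm1: morphisms are maximal compatible sets of edge vectors} and governs \emph{last} factors. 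A single element of $\cC(V)$ has no fixed preimage in $E(\cR)$ --- you acknowledge this in your last paragraph, but the middle step leans on it anyway, and the analogy ``$[\overline P_k]$ behaves like a root vector'' already breaks down (root vectors of $G(V)$ are pairwise incompatible, while the $[\overline P_\ell]$ are pairwise compatible, and the ``$k=j$'' exception in the root-vector rule has no counterpart here).

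Concretely, the criterion you say you would obtain for $[M_{ij}],[M_{k\ell}]$ --- half-open intervals disjoint or nested --- fails in the adjacent case $k=j$. For $n=3$ the only three-part refinement of both $\cP_{01}$ and $\cP_{12}$ is $\Omega$, and the five binary trees on $\{v_1,v_2,v_3\}$ have first-factor pairs $\{M_{01},M_{02}\}$, $\{M_{02},M_{12}\}$, $\{M_{12},\overline P_1\}$, $\{\overline P_1,\overline P_2\}$, $\{\overline P_2,M_{01}\}$ (this is the pentagon of Section 4), so $\{[M_{01}],[M_{12}]\}$ is \emph{not} $s$-compatible even though $(0,1]$ and $(1,2]$ are disjoint; this agrees with the remark preceding the theorem, since adjacent interval modules extend one another. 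The correct pairwise criterion uses the closed intervals $[i,j],[k,\ell]$ (disjoint, or one contained in the other), equivalently noncrossing of the diagonals $\{i,j+1\}$, $\{k,\ell+1\}$ (and $\{k,n+1\}$ for $[\overline P_k]$) of an $(n+2)$-gon. Because of this, your converse step --- treating pairwise ``disjoint or nested'' of the sets $X_{ij}$ and prefixes as equivalent to pairwise compatibility and splitting $V$ along the laminar family --- would accept incompatible collections such as $\{[M_{ij}],[M_{j\ell}]\}$ and manufacture a morphism that does not exist. Once you replace the interval dictionary by the diagonal/arc dictionary (or argue by induction on a root, as in the proof of Theorem~\ref{thm: maximal compatible sets are rooted binary trees}), pairwise noncrossing families extend to triangulations and the existence statement goes through; but also note that uniqueness is not given by saying the elements of $T$ are ``in bijection with'' the first factors --- recovering $T$ from its first factors is precisely what has to be proved, and needs its own (short, inductive) argument.
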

 
 \begin{proof}
 These statements follow from the definitions of the terms.
 \end{proof}
 
 \begin{rem}\label{rem: cluster morphism nomenclature}
 The reason that morphisms in $\cN\cP(n)$ are called ``cluster morphisms'' is because they are given by partial clusters in the cluster category. In this paper they are described in terms of the ``c-vectors'' of the cluster. In \cite{IT13} a purely representation theoretic approach is given using cluster tilting objects in the cluster categories of hereditary abelian subcategories called ``wide subcategories'' \cite{Ingalls-Thomas} of $mod\text-\Lambda$ for any hereditary algebra $\Lambda$.
 \end{rem}
 
 \begin{cor}
 $\cN\cP(n)$ is a cubical category satisfying all the properties of Proposition \ref{prop: good properties of C}.
 \end{cor}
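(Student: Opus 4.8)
The plan is to check, one after another, the five axioms defining a cubical category and then the three supplementary hypotheses of Proposition~\ref{prop: good properties of C}; since all of the substantive content has already been isolated in the preceding results, the corollary is essentially an assembly of them.

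\emph{The cubical axioms.} For axiom~(1) I would set the rank of a cluster morphism $[T]\colon\cP\to\cQ$ to be $rk\,[T]:=|T|$, which equals the difference of the ranks of the two partitions. Since composition is $[S]\circ[T]=[T\cup\sigma_TS]$ with $\sigma_T$ a bijection whose image is disjoint from $T$, the combined edge set has $|T|+|S|$ elements, so the rank is additive under composition. Axioms~(2) and~(3) — that $Fac[T]\cong\cI^k$ when $rk\,[T]=k$ and that the forgetful functor $Fac[T]\to\cN\cP(n)$ is an embedding — are precisely the Lemma proved at the start of this subsection. For axiom~(5) I would observe, using that Lemma, that the last factors of $[T]\colon\cP\to\cQ$ are exactly the $k$ rank-one morphisms $[\{E\}]$ obtained from the individual edges $E\in T$ (target $\cQ$, source $\cQ$ with the two parts of $E$ merged), and reading off the single edge of each last factor recovers $T$ as a subset of $E(\cQ)$, hence recovers $[T]$. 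For axiom~(4) I would first reduce to a one-part source: a morphism $[T]\colon\cP\to\cQ$ is the disjoint union, over the parts $W_\alpha$ of $\cP$, of cluster morphisms $[T_\alpha]$ from the one-part partition $\{W_\alpha\}$ to the induced noncrossing partition of $W_\alpha$, and a first factor of $[T]$ splits exactly one $W_\beta$ and is the identity elsewhere; hence the set of first factors of $[T]$ is the disjoint union of the sets of first factors of the $[T_\alpha]$, and it is enough that each $[T_\alpha]$ be determined by its first factors, which is the uniqueness clause in the Theorem describing $\cC(V)$.

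\emph{The hypotheses of Proposition~\ref{prop: good properties of C}.} Hypothesis~(1), faithfulness of the fundamental groupoid functor, is the Corollary to Proposition~\ref{existence of matrix g[T]}: the matrix representation $g\colon\cN\cP(n)\to U_n(\ZZ)$ separates morphisms and factors through $\pi_1\cN\cP(n)$, so distinct morphisms $\cP\to\cQ$ give nonhomotopic paths in $B\cN\cP(n)$. Hypothesis~(3), that a family of rank-one morphisms into a common object is $t$-compatible iff pairwise $t$-compatible, was already recorded in the Remark after Proposition~\ref{prop: good properties of C}: it follows from Theorem~\ref{thm1: morphisms are maximal compatible sets of edge vectors}, since a subset of $E(\cQ)$ underlies a cluster morphism exactly when its elements are pairwise compatible. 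For hypothesis~(2) I would again decompose over the parts of the source: a rank-one morphism out of $\cP$ splits a single part $W_\alpha$, those splitting distinct parts are automatically $s$-compatible, and for a fixed $W_\alpha$ both $s$-compatibility of a family of the corresponding elements of $\cC(W_\alpha)$ and the existence of a single cluster morphism having them as its first factors reduce, by the Theorem describing $\cC(V)$, to pairwise compatibility in $\cC(W_\alpha)$; so $s$-compatible and pairwise $s$-compatible coincide.

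The only step requiring genuine care is the reduction-to-parts used for axiom~(4) and for hypothesis~(2): one must verify that ``being a first factor of $[T]$'' and ``being $s$-compatible'' split as honest disjoint unions over the parts of the source, so that the one-part classification of $\cC(V)$ really does suffice. I do not anticipate a true obstacle — the hard work, namely the transport bijections $\sigma_T$ of Theorem~\ref{thm2: bijection ET to E} behind $Fac[T]\cong\cI^k$, and the explicit description of $\cC(V)$, is already done — but this is where the bookkeeping is most delicate and where I would write the argument out in full.
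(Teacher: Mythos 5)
Your proposal is correct and follows essentially the same route as the paper: the Lemma giving $Fac[T]\cong\cI^k$ and the embedding, the reduction of first-factor statements to one part of the source and then to the Theorem describing $\cC(V)$, the matrix representation $g[T]$ for faithfulness of the fundamental groupoid, and Theorem \ref{thm1: morphisms are maximal compatible sets of edge vectors} for $t$-compatibility. The paper's own proof is just a terser version of this assembly (including the same ``deal with each part of $\cP$ separately'' reduction you flag as the delicate step), so no further comparison is needed.
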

 
 \begin{proof}
 To prove that $\cN\cP(n)$ is cubical, it remains to prove Property (3) in the definition of cubical category. Since we have shown that the first factors are distinct, we need only show that the set of first factors determines the morphism $[T]:\cP\to\cQ$. But $T$ is a disjoint union of rooted trees, one for each part of $\cP$. So, we can deal with each part of $\cP$ separately and we are reduced to the theorem above.
 
 The theorem above shows that Condition (2) in Proposition \ref{prop: good properties of C} is satisfied. The other conditions have already been verified.
 \end{proof}
 
 By Proposition \ref{prop: good properties of C} we obtain the following.
  
 \begin{thm}
 $B\cN\cP(n)$ is locally $CAT(0)$ and thus a $K(\pi,1)$.
 \end{thm}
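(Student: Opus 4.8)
The plan is to deduce this statement immediately from Proposition~\ref{prop: good properties of C}: once we know that $\cN\cP(n)$ is a cubical category satisfying its three numbered hypotheses, the proposition asserts that $B\cN\cP(n)$ is locally $CAT(0)$, and then, since $B\cN\cP(n)$ is connected, the Corollary to Gromov's theorem stated in the Introduction gives that it is a $K(\pi,1)$. The first half, that $\cN\cP(n)$ is cubical, has already been established (the factorization category of a rank $k$ cluster morphism is $\cI^k$, first and last factors are distinct, and a cluster morphism is recovered from either its first factors or its last factors). So the only remaining task is to verify hypotheses (1), (2), (3) of the proposition, and I would do each by quoting a result already in the paper.

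For hypothesis (1), faithfulness of the fundamental groupoid, I would invoke the standard representation of Proposition~\ref{existence of matrix g[T]}: the assignment $[T]\mapsto g[T]\in U_n(\ZZ)$ is a functor into a group, hence factors through $\pi_1\cN\cP(n)$, and it separates parallel morphisms; thus distinct morphisms $\cP\to\cQ$ give distinct group elements and therefore non-homotopic paths in $B\cN\cP(n)$. For hypothesis (3), that a pairwise $t$-compatible set of rank one morphisms is $t$-compatible, I would note that the last factors of $[T]\colon\cP\to\cQ$ are exactly the edge vectors making up $T\subseteq E(\cQ,\cP)$, so this is precisely Theorem~\ref{thm1: morphisms are maximal compatible sets of edge vectors}: a set of such edges extends to a cluster morphism if and only if it is pairwise compatible. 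Hypothesis (2), the same statement for first factors, would reduce part by part (since $T$ is a disjoint union of rooted trees indexed by the parts of $\cP$) to the classification of rank one morphisms out of $\{V\}$ and the assertion that such morphisms form the first factors of a common cluster morphism exactly when they are pairwise compatible, i.e.\ the statement about $\cC(V)$ and the cluster category of type $A_{n-1}$.

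Once (1)--(3) are in hand, Proposition~\ref{prop: good properties of C} finishes the argument, so the present theorem is essentially an assembly of earlier results. The point I would flag as the real heart of the matter is not in this theorem but inside that proposition: the ``big'' factorization cubes tiling $B\cN\cP(n)$ need not intersect in common faces, so one first halves every $k$-cube into $2^k$ small cubes (Lemma~\ref{lem: small cubes intersect in big cube} and the surrounding lemmas), checks that small cubes do meet correctly, and then recomputes the link of each new vertex $v$ --- the center of a factorization cube of some $f\colon X\to Y$ of rank $k$ --- as a join $Lk_-(X)\ast Lk_+(Y)\ast S^0\ast\cdots\ast S^0$. Each of $Lk_-(X)$ and $Lk_+(Y)$ is a flag complex precisely because of the pairwise-compatibility conditions (2), (3) just verified; a join of flag complexes is flag; and Gromov's criterion then applies. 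So, modulo that general cube-complex machinery, the proof of the theorem is exactly the verification of hypotheses (1)--(3) described above.
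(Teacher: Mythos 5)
Your proposal is correct and follows essentially the same route as the paper: the theorem is deduced from Proposition \ref{prop: good properties of C} together with the verification that $\cN\cP(n)$ is cubical and satisfies its three hypotheses, using exactly the same ingredients the paper cites (the matrix representation $g[T]$ for faithfulness, Theorem \ref{thm1: morphisms are maximal compatible sets of edge vectors} for t-compatibility, and the part-by-part reduction to the rank one morphisms out of $\{V\}$ for s-compatibility). Your remark that the real work lies inside the proposition --- subdividing the factorization cubes, showing small cubes meet in faces, and identifying each vertex link as a join $Lk_-(X)\ast Lk_+(Y)\ast S^0\ast\cdots\ast S^0$ of flag complexes before applying Gromov's criterion --- also matches the paper's treatment.
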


\section{Fundamental group}

In this section we will compute the fundamental group of $B\cN\cP(n)$.

 \begin{thm}\label{thm: fundamental group is picture group}
 The fundamental group of $B\cN\cP(n)$ is the group $G(A_{n-1})$ having the following presentation. The generators are $x_{ij}$ where $1\le i<j\le n$ with relations:
 \begin{enumerate}
 \item $[x_{ij},x_{jk}]=x_{ik}$ for all $1\le i<j<k\le n$.
 \item $[x_{ij},x_{k\ell}]=1$ if $x_{ij},x_{k\ell}$ are noncrossing in the sense that the closed intervals $[i,j],[k,\ell]$ are either disjoint or one is contained in the interior of the other.
 \end{enumerate}
 Here $[x,y]:=y^{-1}xyx^{-1}$.
 \end{thm}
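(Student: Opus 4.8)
The plan is to read off a presentation of $\pi_1$ from the combinatorics of $\cN\cP(n)$ and then reduce it, by Tietze transformations, to the stated presentation of $G(A_{n-1})$. Since $B\cN\cP(n)$ is a connected cube complex, its fundamental group depends only on its $2$-skeleton, and I would compute it from the nerve via the fundamental groupoid. Take the basepoint to be the discrete partition $\Omega$, which is \emph{weakly terminal} since every object $\cS$ admits a cluster morphism $u_\cS\colon\cS\to\Omega$; declaring all the loops $u_\cS$ trivial, $\pi_1(B\cN\cP(n),\Omega)$ is presented by generators $\gamma_{[T]}$, one for each morphism $[T]$, with relations $\gamma_{[\mathrm{id}]}=1$, $\gamma_{u_\cS}=1$ and $\gamma_{[T']\circ[T]}=\gamma_{[T']}\gamma_{[T]}$. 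Every morphism is a product of rank one morphisms, and by the cube structure any two such factorizations of a fixed morphism of rank $k$ differ by a sequence of moves coming from factorization squares of morphisms of rank two (the adjacent transpositions of the $k$ coordinate steps of $\cI^k$). Hence the presentation collapses to: generators $\gamma_{[E]}$ for the rank one morphisms $[E]$ not lying in a chosen spanning tree of the $1$-skeleton, and relations $\gamma_{[E']}\gamma_{[F]}=\gamma_{[E'']}\gamma_{[G]}$, one for each morphism of rank two and its two factorizations $[E']\circ[F]=[E'']\circ[G]$.

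Next I would isolate the expected generators. For $1\le p<q\le n$ let $\cR_{pq}$ be the noncrossing partition whose only non-singleton part is $\{v_p,v_q\}$; splitting that part gives exactly two rank one morphisms $\cR_{pq}\to\Omega$ (the two binary trees on a two-element parallel set). I would put one of the two into the spanning tree and let $x_{pq}$ be $\gamma$ of the other; a count shows these are exactly the $\binom n2$ non-tree rank one morphisms \emph{into} $\Omega$. I would then check that relations (1) and (2) hold among the $x_{pq}$ in $\pi_1$. Relation (2), for intervals $[i,j],[k,\ell]$ that are disjoint or one nested in the interior of the other, comes from a rank two morphism into $\Omega$ out of the partition whose non-singleton parts are $\{v_i,v_j\}$ and $\{v_k,v_\ell\}$: its factorization square is a genuine product square, and after rewriting its two intermediate rank one morphisms in terms of $x_{ij}$ and $x_{k\ell}$ by one further square each, the square relation becomes $x_{ij}x_{k\ell}=x_{k\ell}x_{ij}$. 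Relation (1), $[x_{ij},x_{jk}]=x_{ik}$ for $i<j<k$, comes from the rank two morphisms into $\Omega$ out of the partition with the single three-element part $\{v_i,v_j,v_k\}$: one of them factors both as ``split off $v_k$, then split $\{v_i,v_j\}$'' and as ``split off $v_i$, then split $\{v_j,v_k\}$'', and comparing these two factorizations together with the adjacent square that passes through the part $\{v_i,v_k\}$ produces exactly the commutator relation. This yields a surjective homomorphism $G(A_{n-1})\twoheadrightarrow\pi_1(B\cN\cP(n),\Omega)$.

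It then remains to show this homomorphism is an isomorphism, which I would do by two inductions on the rank of the target object, with a compatible and explicit choice of spanning tree extending the choices above. First, the $x_{pq}$ generate $\pi_1$: a rank one morphism $[E]\colon\cR\to\cS$ with $\cS\neq\Omega$ is the first factor of some morphism $\cR\to\Omega$, and transporting it across the factorization cube of that morphism --- a sequence of square moves whose shape is dictated by the bijections $\sigma_T$ of Theorem~\ref{thm2: bijection ET to E} and the compatibility description of Theorem~\ref{thm1: morphisms are maximal compatible sets of edge vectors} --- rewrites $\gamma_{[E]}$ as a word in the $\gamma$'s of rank one morphisms into $\Omega$, i.e.\ in the $x_{pq}$ and the tree generators. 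Second, every defining square relation is a consequence of (1) and (2): a factorization square of a morphism of rank two whose target is not $\Omega$ is reduced, by the same transport, to factorization squares with target of strictly smaller rank, bottoming out at the squares out of the two-block and three-block partitions above, which were arranged to give only (1) and (2). The main obstacle is exactly this last step --- organizing the double induction and keeping track of which generators and relations have been ``used up'', i.e.\ choosing the spanning tree so that the Tietze reduction terminates precisely at the stated presentation; its combinatorial engine is the compatibility calculus of Section~2. As a consistency check, the standard representation $g\colon\pi_1\cN\cP(n)\to U_n(\ZZ)$ of Proposition~\ref{existence of matrix g[T]} must be the homomorphism sending each $x_{ij}$ to the elementary matrix $I+E_{ij}$, and this indeed respects (1) and (2).
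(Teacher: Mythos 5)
Your setup (basepoint $\Omega$, the loops $x_{pq}$ from the two rank one morphisms $\cR_{pq}\to\Omega$, and the derivation of relations (1) and (2) from the squares at a partition with two $2$-element parts and the pentagon of five squares at $\cS_{ijk}$) matches the paper, and the reduction of the nerve presentation to ``generators = rank one morphisms mod a spanning tree, relations = factorization squares of rank two morphisms'' is a standard and correct use of $Fac(f)\cong\cI^k$. The genuine gap is the decisive step, which you yourself flag as ``the main obstacle'' and do not carry out: showing that all non-$\Omega$-targeted rank one generators can be eliminated and that every square relation is a consequence of (1) and (2). As stated, the inductive claim is not well-founded: transporting a factorization square of a rank two morphism $\cR\to\cS$ with $\cS\neq\Omega$ into the factorization cube of a composite $\cR\to\cS\to\Omega$ produces, besides squares with target of smaller rank, new squares at the same level and new intermediate generators, so it is not clear the proposed double induction terminates; making it terminate requires knowing that the union of all factorization cubes of morphisms $\cS\to\Omega$ is simply connected (indeed a disk), and no substitute for that fact appears in your argument. (The check against the representation $g:\pi_1\to U_n(\ZZ)$ only constrains the quotient, not injectivity of $G(A_{n-1})\to\pi_1$.)

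This missing fact is exactly what the paper supplies, by a different device: Theorem~\ref{thm: 4.4} builds a coarser CW structure on $B\cN\cP(n)$ with a single $0$-cell $\Omega$, one $1$-cell per pair $i<j$, and one $k$-cell $e(\cS)$ for each noncrossing partition $\cS$ of rank $k$, where $e(\cS)$ is the union of all factorization cubes of morphisms $\cS\to\Omega$. The point of the proof is that the forward link $Lk_+(\cS)$ is a join of spheres (boundaries of associahedra), so this union is a product of disks, i.e.\ a genuine cell attached along cells of smaller rank. With that structure in hand, the presentation is immediate from the standard fact for one-vertex CW complexes: the $1$-cells give the $x_{ij}$, the $2$-cells are precisely the squares and pentagons already computed (giving (1) and (2)), and cells of dimension $\ge 3$ do not affect $\pi_1$; the paper separately proves generation by the $x_{ij}$ using connectivity of $Lk_+(\cS)$ and the ``common first factor'' equivalence. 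So your route could in principle be completed, but only by proving something equivalent to the cell/simple-connectivity statement of Theorem~\ref{thm: 4.4}, which is the key idea your proposal is missing.
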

 
 \subsection{Generators}
 
 For the basepoint of $B\cN\cP(n)$ we take the partition $\Omega$ of $V=\{v_1,\cdots,v_n\}$ into one point sets. Let $\cP_{ij}$ be the partition of giving by merging $v_i,v_j$, $i<j$, into one part $\{v_i,v_j\}$. Then we have two morphisms $[\beta_{ij}], [-\beta_{ij}]:\cP_{ij}\to\Omega$ given by $\beta_{ij}=v_j-v_i$ and $-\beta_{ij}=v_i-v_j$.

 Then $x_{ij}$ is defined to be the homotopy class of the loop $[-\beta_{ij}]^{-1}[\beta_{ij}]$ at $\Omega$ where we use the convention that paths are always composed left to right. To show that the $x_{ij}$ generate $\pi_1 B\cN\cP(n)$, we use the following equivalence relation on morphisms.
 
Given two morphisms $[T],[S]:\cS\to\Omega$, we say that $[T]\sim[S]$ if there is a sequence of morphisms $[T]=[T_0],[T_1],\cdots,[T_m]=[S]$ so that $[T_i],[T_{i+1}]$ share a common first factor $\cS\to\cR_i$ for each $i$.

\begin{lem}
Let $\cS$ be a noncrossing partition of rank $k\ge 2$ then any two morphisms $[T],[S]:\cS\to\Omega$ are equivalent. 
\end{lem}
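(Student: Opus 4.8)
The plan is to reformulate the statement as the connectedness of the forward link $Lk_+(\cS)$. First note that every morphism $\cS\to\Omega$ has rank $k=rk\,\cS$, so the morphisms $\cS\to\Omega$ are exactly the $(k-1)$-dimensional (hence top-dimensional) simplices of $Lk_+(\cS)$, and two of them have a common first factor precisely when the corresponding simplices share a vertex. Since any rank-one morphism out of $\cS$ extends to a morphism $\cS\to\Omega$, the complex $Lk_+(\cS)$ is pure of dimension $k-1$. For a pure complex of dimension $\ge 1$, connectedness implies that any two top simplices are linked by a chain of top simplices with consecutive members sharing a vertex: walk along a path in the $1$-skeleton joining a vertex of one simplex to a vertex of the other and, using purity, replace each edge of the path by a top simplex containing it. So it suffices to show that $Lk_+(\cS)$ is connected when $k\ge 2$.

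I would then reduce this further to connectedness of the $1$-skeleton of $Lk_+(\cS)$, using that $Lk_+(\cS)$ is a flag complex (Proposition~\ref{prop: good properties of C}(2), which holds for $\cN\cP(n)$ by the results of Section~3). A rank-one morphism out of $\cS$ splits a single part of $\cS$ in two; as in Section~3, where the first factors of a morphism decompose over the parts of its source, two rank-one morphisms splitting \emph{distinct} parts of $\cS$ are always s-compatible, i.e., adjacent in $Lk_+(\cS)$. Now argue by cases. If $\cS$ has at least two parts of size $\ge 2$, then any vertex of $Lk_+(\cS)$ is adjacent to every vertex splitting a part other than its own, and a short chaining argument through a second large part connects the $1$-skeleton. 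If $\cS$ has exactly one part $W$ with $|W|\ge 2$, then $k=|W|-1\ge 2$ forces $|W|\ge 3$, and $Lk_+(\cS)$ is isomorphic to the forward link of the one-part partition $\{W\}$, which by the theorem describing $\cC(W)$ and the ensuing remark is the type-$A_{|W|-1}$ cluster tilting complex, homeomorphic to the sphere $S^{|W|-2}$ and therefore connected since $|W|\ge 3$. Either way $Lk_+(\cS)$ is connected, and by the previous paragraph the lemma follows.

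The only non-formal ingredient is the last case, which invokes the classical fact (cited via $\cC(V)$) that the type-$A$ cluster tilting complex is a sphere — equivalently, that the compatibility graph of rank-one morphisms out of a single part is connected as soon as the part has at least three elements. Everything else is bookkeeping with the cubical structure of Section~3, and I would expect the parts needing the most care to state precisely are the reduction to the $1$-skeleton and the claim that rank-one morphisms splitting distinct parts of $\cS$ are automatically s-compatible.
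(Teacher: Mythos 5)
Your proposal is correct and follows essentially the same route as the paper: the same case split (at least two non-singleton parts of $\cS$ versus a single one), with compatibility of rank-one morphisms splitting distinct parts doing the work in the first case and the sphere/cluster-complex connectivity of the forward link $Lk_+(\cS)$ in the second. The only real difference is presentational: the paper exhibits the two-step equivalence $[T]\sim[R]\sim[S]$ directly via a mixed morphism instead of your detour through connectedness of the $1$-skeleton plus purity, which is exactly the argument the paper leaves implicit in the one-big-part case.
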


\begin{proof}
Suppose first that $\cS$ has at least two parts $X,Y$ with more than one element. Then $[T],[S]$ are given by binary trees on each of these parts. Let $[R]$ be any  morphism which is equal to $[T]$ on $X$ and $[S]$ on $Y$. Then the part of $[T]$ on $X$ gives a common first factor for $[T],[R]$. So $[T]\sim[R]$. Similarly, $[R]\sim [S]$. So, $[T]\sim[S]$.

Now suppose that $\cS$ has only one part $W$ which is not a singleton. Then $W$ has $k+1$ elements and the forward link of $\cS$ is a triangulation of the sphere $S^{k-1}$ into a Catalan number of simplices. When $k\ge2$ this is connected and thus any two morphisms are equivalent.
\end{proof}

\begin{prop}
The fundamental group of $B\cN\cP(n)$ is generated by the loops $x_{ij}$.
\end{prop}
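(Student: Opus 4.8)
The plan is to combine the standard description of the fundamental group of the classifying space of a small category with the Lemma above, organised as an induction on rank.

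\emph{Reduction to morphisms into $\Omega$.} Every object $\cS$ is refined by $\Omega$, so there is at least one morphism $\cS\to\Omega$; I would fix one, $\tau_\cS$, for each object, with $\tau_\Omega=\mathrm{id}$ and $\tau_{\cP_{ij}}=[\beta_{ij}]$. The edges $\{\tau_\cS:\cS\neq\Omega\}$ form a spanning tree of the $1$-skeleton of $B\cN\cP(n)$: they are a star centred at $\Omega$, using that $\Omega$ admits no outgoing non-identity morphism since nothing properly refines it. By the standard presentation of the fundamental group of a CW complex relative to a maximal tree in its $1$-skeleton, $\pi_1(B\cN\cP(n),\Omega)$ is generated by the loops $\gamma_f$ obtained by following $\tau_\cR$ backwards, then $f$, then $\tau_\cS$, for all morphisms $f\colon\cR\to\cS$, subject to $\gamma_{\tau_\cS}=1$ and the composition relations $\gamma_{g\circ f}=\gamma_f\cdot\gamma_g$ coming from the $2$-simplices of the nerve. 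Applying the composition relation to $\cR\xrarrow{f}\cS\xrarrow{\tau_\cS}\Omega$ gives $\gamma_f=\gamma_{\tau_\cS\circ f}$, so it suffices to show $\gamma_h\in H$ for every morphism $h\colon\cS\to\Omega$, where $H\le\pi_1$ is the subgroup generated by the $x_{ij}$.

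\emph{Induction on $k=rk\,\cS$.} If $k=0$ then $\cS=\Omega$ and $\gamma_{\mathrm{id}}=1$. If $k=1$ then $\cS$ is one of the partitions $\cP_{ij}$, whose only morphisms to $\Omega$ are $[\beta_{ij}]$ and $[-\beta_{ij}]$, and $\gamma_{[\beta_{ij}]}=1$ while $\gamma_{[-\beta_{ij}]}=x_{ij}^{-1}\in H$. Now assume $k\ge2$ and the statement for all smaller ranks. Given $h\colon\cS\to\Omega$, the Lemma above furnishes a chain $h=g_0\sim g_1\sim\cdots\sim g_r=\tau_\cS$ of morphisms $\cS\to\Omega$ in which each consecutive pair $g_t,g_{t+1}$ shares a first factor $[U_t]\colon\cS\to\cR_t$, so that $rk\,\cR_t=k-1$ and $g_t=g'_t\circ[U_t]$, $g_{t+1}=g'_{t+1}\circ[U_t]$ for unique $g'_t,g'_{t+1}\colon\cR_t\to\Omega$. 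The composition relation then gives
\[
\gamma_{g_t}=\gamma_{[U_t]}\cdot\gamma_{g'_t},\qquad \gamma_{g_{t+1}}=\gamma_{[U_t]}\cdot\gamma_{g'_{t+1}},
\]
with the \emph{same} prefix $\gamma_{[U_t]}$. By the inductive hypothesis at rank $k-1$, $\gamma_{g'_t}$ and $\gamma_{g'_{t+1}}$ lie in $H$; hence if $\gamma_{g_{t+1}}\in H$ then $\gamma_{[U_t]}=\gamma_{g_{t+1}}\gamma_{g'_{t+1}}^{-1}\in H$, and therefore $\gamma_{g_t}\in H$. Since $\gamma_{g_r}=\gamma_{\tau_\cS}=1$, descending along the chain gives $\gamma_h=\gamma_{g_0}\in H$. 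Together with the reduction this proves $\pi_1(B\cN\cP(n),\Omega)=H$.

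\emph{Where the difficulty sits.} The only delicate point in the induction is the passage through one equivalence step: a priori, replacing $g_{t+1}$ by $g_t$ conjugates by a loop not yet known to lie in $H$, which would seem to require $H\trianglelefteq\pi_1$. This is sidestepped precisely because $g_t$ and $g_{t+1}$ share the \emph{same} first factor, so that loop is a common prefix and simply cancels — which is exactly why the Lemma is phrased in terms of common first factors. The genuine work is therefore already carried out in that Lemma (ultimately, in backward links of $\cN\cP(n)$ being flag complexes); the remaining content here is the bookkeeping above.
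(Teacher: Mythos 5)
Your argument is correct and is essentially the paper's own proof: choose a morphism from each object to $\Omega$ (the star-shaped spanning tree), reduce every generator to a loop of the form $\tau_\cS^{-1}h$ with $h:\cS\to\Omega$, and then induct on $rk\,\cS$ using the equivalence Lemma, where sharing a common first factor is exactly what lets the loop $\gamma_{g_t}^{-1}\gamma_{g_{t+1}}$ be rewritten as a loop through an object of smaller rank. Your version merely makes explicit the CW/spanning-tree presentation and the common-prefix cancellation that the paper's shorter proof leaves implicit.
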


\begin{proof}
Choosing a morphism from every object to $\Omega$ we see that every loop at $\Omega$ is a composition of loops of the form $[T]^{-1}[S]$ where $[T],[S]$ are morphisms $\cS\to\Omega$. If $\cS$ has rank 1 then this loop is $x_{ij}$ or its inverse for some $i,j$. Otherwise, $[S]\sim[T]$ by the lemma which implies that $[T]^{-1}[S]$ is a composition of loops going through some object $\cR$ of smaller rank than $\cS$. The proposition follows by induction on that rank.
\end{proof}

\subsection{Relations}\label{ss: example Sijk}

Let $(i,j),(k,\ell)$ be two noncrossing pairs of numbers between 1 and $n$. Then, there is a noncrossing partition $\cS$ of rank 2  having $\{v_i,v_j\}$ and $\{v_k,v_\ell\}$ as two of its parts. There are exactly four morphisms $\cS\to\Omega$ giving four factorization squares (2-cubes). {\color{blue}Recall that $\Omega$ is the unique partition of rank 0, dividing a set into its individual elements.}
\[
\xymatrix{
\Omega & \cP_{ij}\ar[l]_{[-\beta_{ij}]}\ar[r]^{[\beta_{ij}]}
&\Omega\\
\cP_{k\ell}\ar[d]_{[-\beta_{k\ell}]}\ar[u]^{[\beta_{k\ell}]}
&\cS\ar[u]\ar[d]\ar[l]\ar[r]
&\cP_{k\ell}\ar[d]^{[-\beta_{k\ell}]}\ar[u]_{[\beta_{k\ell}]}\\
\Omega & \cP_{ij}\ar[l]^{[-\beta_{ij}]}\ar[r]_{[\beta_{ij}]}
&\Omega	}
\]
Together, these four squares give a homotopy $x_{ij}x_{k\ell}\simeq x_{k\ell}x_{ij}$ giving Relation (2) in {\color{blue}Theorem \ref{thm: fundamental group is picture group}.}

To obtain the other relation consider the partition $\cS_{ijk}$ of rank 2 having one part $\{v_i,v_j,v_k\}$ and the other parts all singletons. {\color{blue}Then there are exactly five morphisms $\cS_{ijk}\to \Omega$ given by the binary trees in Figure \ref{Fig (b): five cluster morphisms of rank 2}. Each morphism has two factorizations giving the diagram shown in Figure \ref{Fig: pentagon}.}

{
\begin{figure}[htbp]
\begin{center}
\begin{tikzpicture}
	\draw[thick] (-2,.5)node[left]{$i$}--+(1,-.5)node[right]{$j$} +(0,0)--+(2,.5)node[right]{$k$} +(.3,-.6)node{$-\beta_{ij}$} +(1,.6)node{$\beta_{ik}$} +(1,-1.5)node{$(a)$};
	\draw[thick] (1,0)node[left]{$i$}--+(1,.6)circle[radius=1pt]node[above]{$j$}--+(2,1)node[right]{$k$} +(.6,-.1)node{$\beta_{ij}$} +(1.65,.4)node{$\beta_{jk}$}+(1,-1)node{$(b)$};
	\draw[thick] (4,0)node[left]{$i$}--+(1,1)node[above]{$j$}--+(2,0)node[right]{$k$} +(.3,.8)node{$\beta_{ij}$} +(1.7,.8)node{$-\beta_{jk}$} +(1,-1)node{$(c)$};
	\draw[thick] (9,0)node[right]{$k$}--+(-1,.6)circle[radius=1pt]node[above]{$j$}--+(-2,1)node[left]{$i$} +(-1.6,0.4)node{$-\beta_{ij}$} +(-.35,.6)node{$-\beta_{jk}$} +(-1,-1)node{$(d)$};
	\draw[thick] (12,.5)node[right]{$k$}--+(-1,-.5)node[left]{$j$} +(0,0)--+(-2,.5)node[left]{$i$} +(-1,0.6)node{$-\beta_{ik}$} +(-.4,-.6)node{$\beta_{jk}$} +(-1,-1.5)node{$(e)$};
\end{tikzpicture}
\color{blue}
\caption{These are the binary trees giving the five cluster morphisms $ \cS_{ijk}\to \Omega$ using Definition \ref{def: binary tree notation}. Each of these cluster morphisms has two factorizations giving 5 squares in the category of noncrossing partitions. These five squares are labeled $(a),(b),(c),(d),(e)$ in Figure \ref{Fig: pentagon}. Binary trees (a) and (e) are explained more fully in Figure \ref{Fig: factorization of morphism of rank 2}. Figure \ref{Fig: composition of morphisms using trees} shows how these diagrams are used to compute composition of morphisms.}
\label{Fig (b): five cluster morphisms of rank 2}
\end{center}
\end{figure}
}

{
\begin{figure}[htbp]
\begin{center}
\[
\xymatrixrowsep{10pt}\xymatrixcolsep{10pt}
\xymatrix{
&& \Omega
 && \cP_{ij}\ar[rr]^{[-\beta_{ij}]}\ar[ll]_{[\beta_{ij}]}&&  \Omega
 \\
&\cP_{jk}\ar[ur]^{[\beta_{jk}]}\ar[dl]_{[-\beta_{jk}]} && (b)&&(a)
\\
\Omega
&(c)&&&\cS_{ijk}\ar[rr]^{[\beta_{jk}]}\ar[dd]^{[-\beta_{ij}]}\ar[uu]_{[\beta_{ik}]}\ar[lllu]_(.6){[\beta_{ij}]}\ar[llld]_(.4){[-\beta_{ik}]}&&\cP_{ik}\ar[uu]_{[\beta_{ik}]}\ar[dd]^{[-\beta_{ik}]}\\
&\cP_{ij}\ar[ul]^{[\beta_{ij}]}\ar[dr]_{[-\beta_{ij}]}&& (d)&&(e)
&&\\
&& \Omega &&\cP_{jk}\ar[ll]_{[-\beta_{jk}]}\ar[rr]^{[\beta_{jk}]}&&\Omega
	}
\]
\color{blue}
\caption{The five morphisms $\cS_{ijk}\to\Omega$ given by the binary trees in Figure \ref{Fig (b): five cluster morphisms of rank 2} give five factorization squares in $B\cN\cP(n)$ which fit into a pentagon as shown above.}
\label{Fig: pentagon}
\end{center}
\end{figure}
}

Starting from the lower left $\Omega$, the two paths going up to the upper left $\Omega$ are $x_{ij}x_{jk}\simeq x_{jk}x_{ik}x_{ij}$ giving Relation (1). We now need to show that there are no other relations. We use the fact that, for any CW complex with one 0-cell, the generators of $\pi_1$ are given by the 1-cells and the relations are given by the 2-cells.

\begin{thm}\label{thm: 4.4}
The classifying space $B\cN\cP(n)$ is an $n-1$ dimensional CW-complex having one cell $e(\cS)$ of dimension $k$ for every noncrossing partition $\cS$ of rank $k$. The $k$-cell $e(\cS)$ is the union of all factorization cubes of all morphisms $\cS\to\Omega$.
\end{thm}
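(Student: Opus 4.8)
The plan is to re-bundle the cube complex of Section~3 into a coarser CW structure. Recall first that the open factorization cubes $\mathrm{int}\,Fac(f)$, as $f$ ranges over all morphisms of $\cN\cP(n)$, partition $B\cN\cP(n)$: each point $x$ lies in the interior of a unique nondegenerate simplex, whose composite is $\varphi(x)$, and $x\in\mathrm{int}\,Fac(f)$ exactly when $\varphi(x)=f$. For a noncrossing partition $\cS$ I set $\mathring e(\cS):=\bigcup\{\mathrm{int}\,Fac(f):\ f\text{ has source }\cS\}$; grouping the partition above by the source of $\varphi(x)$ gives $B\cN\cP(n)=\bigsqcup_\cS\mathring e(\cS)$, and $\mathring e(\cS)$ is the union of the open simplices of $B\cN\cP(n)$ whose initial vertex is $\cS$. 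I then put $e(\cS):=\bigcup_{[T]:\cS\to\Omega}Fac[T]$, a finite union of closed cubes, hence a compact subcomplex. Any morphism $f$ with source $\cS$ factors as $h\circ f$ for some binary forest $h$ from its target to $\Omega$, so $Fac(f)$ is a face of $Fac[T]$ (with $[T]=h\circ f$) containing the vertex $\cS$; hence $\mathring e(\cS)\subseteq e(\cS)$, and since $e(\cS)$ is closed while $Fac[T]=\overline{\mathrm{int}\,Fac[T]}$ with $\mathrm{int}\,Fac[T]\subseteq\mathring e(\cS)$, we get $e(\cS)=\overline{\mathring e(\cS)}$. Also $\dim Fac[T]=rk\,[T]=rk\,\cS$, so $\dim e(\cS)=rk\,\cS$ and the maximum is $rk\,\{V\}=n-1$.

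The core step is to realize $\mathring e(\cS)$ as an open $rk\,\cS$-cell with a characteristic map. Inside $B\cN\cP(n)$ the cube $Fac[T]$ (for $[T]:\cS\to\Omega$) is an $rk\,\cS$-cube with $\cS$ as its initial vertex, and the union of its open faces through $\cS$ is exactly $Fac[T]\cap\mathring e(\cS)$. Let $E_0(\cS)$ be the cubical complex obtained by gluing the closed cubes $Fac[T]$, $[T]:\cS\to\Omega$, only along those faces through $\cS$ that they already share in $B\cN\cP(n)$; the tautological map $E_0(\cS)\to B\cN\cP(n)$ is the identity on the union of these $\cS$-corners, so it is injective on $\mathring e(\cS)$ and a homeomorphism there, and its image is $e(\cS)$. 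Every cube of $E_0(\cS)$ contains the vertex $\cS$, so $E_0(\cS)$ is its own closed star at $\cS$, and the closed star of a vertex in a cube complex is the cone on its link: $E_0(\cS)\cong \cS\ast L$, where $L$ is the link of $\cS$. By construction $L$ is the simplicial complex whose vertices are the rank-one morphisms out of $\cS$ and whose simplices are the sets of such morphisms occurring jointly as first factors of a single $[T]:\cS\to\Omega$; that is, $L\cong Lk_+(\cS)$, the forward link of $\cS$. Decomposing $\cS$ over its parts, $Lk_+(\cS)$ is the join over the parts $W$ of the type-$A_{|W|-1}$ cluster complexes --- the boundary complexes of the simplicial Stasheff associahedra (Remark in Section~3, and the proof in Section~4 for the one-part case) --- hence a combinatorial $(rk\,\cS-1)$-sphere. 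Therefore $E_0(\cS)\cong \cS\ast S^{rk\,\cS-1}\cong D^{rk\,\cS}$, its manifold interior is $\mathring e(\cS)$, and the composite $D^{rk\,\cS}\cong E_0(\cS)\to B\cN\cP(n)$ is a characteristic map for $e(\cS)$.

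It then remains to check the attaching map and the CW axioms. The set $e(\cS)\setminus\mathring e(\cS)$ is the union of the open faces of the cubes $Fac[T]$, $[T]:\cS\to\Omega$, that avoid $\cS$; each such face is $\mathrm{int}\,Fac(f')$ with $f'$ having source a partition $\cR$ that strictly refines $\cS$, so $rk\,\cR<rk\,\cS$ and $\mathrm{int}\,Fac(f')\subseteq\mathring e(\cR)$. Thus the attaching map carries $\partial D^{rk\,\cS}$ into the $(rk\,\cS-1)$-skeleton $\bigcup_{rk\,\cR<rk\,\cS}\mathring e(\cR)$. Closure-finiteness is immediate since each $e(\cS)$ is a finite union of closed simplices of $B\cN\cP(n)$; and $B\cN\cP(n)$ already carries the weak topology with respect to its simplices, hence with respect to the coarser family $\{e(\cS)\}$ because every simplex lies in some $e(\cS)$. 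This completes the plan.

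The step I expect to be the real obstacle is the identification $L\cong Lk_+(\cS)$ together with the sphericity of $Lk_+(\cS)$. The first half is the bookkeeping that the cubes $Fac[T]$ are glued in $B\cN\cP(n)$ along their $\cS$-corner faces precisely according to which sets of rank-one morphisms out of $\cS$ are jointly realizable as first factors --- this is exactly where the embedding of factorization categories and the criterion ``$s$-compatible iff pairwise $s$-compatible'' of Proposition~\ref{prop: good properties of C}, and through them Theorems~\ref{thm1: morphisms are maximal compatible sets of edge vectors} and~\ref{thm2: bijection ET to E}, do the work. The second half, that $Lk_+(\cS)$ is an honest triangulated $(rk\,\cS-1)$-sphere and not merely a homotopy sphere, is the type-$A$ instance of the fact that the cluster complex of a hereditary algebra is the boundary of a simplicial polytope; for a one-part partition this is the simplicial Stasheff associahedron, and the general statement follows by taking joins over the parts of $\cS$.
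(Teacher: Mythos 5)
Your proposal is correct and follows essentially the same route as the paper: define $e(\cS)$ as the union of the factorization cubes of morphisms $\cS\to\Omega$, identify its interior as $\{x : \varphi(x)$ has source $\cS\}$, show it is a disk of dimension $rk\,\cS$ via the sphericity of the forward link $Lk_+(\cS)=\ast_i S^{k_i-2}$ (the paper phrases the disk as a product of associahedra over the parts of $\cS$, you as the cone on the join of cluster-complex spheres, which agree under the standard homeomorphism $C(A\ast B)\cong CA\times CB$), and observe that the boundary consists of faces not containing $\cS$, hence lies in cells indexed by partitions of smaller rank. Your additional verifications (the explicit characteristic map via the glued complex $E_0(\cS)$, closure finiteness, weak topology) simply spell out details the paper leaves implicit.
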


Assuming the theorem, the 1-cells are all the loops
\[
	\Omega\xlarrow{[-\beta_{ij}]}\cP_{ij}\xrarrow{[\beta_{ij}]}\Omega
\]
The 2-cells are the squares and pentagons given above giving Relations (2), (1). The higher cells do not affect the fundamental group.

\begin{proof}
Given any noncrossing partition $\cS=\{X_1,\cdots,X_m\}$, a morphism $[T]:\cS\to\Omega$ is a product of morphism $[T_i]$ one for each part $X_i$ of $\cS$. The forward link $Lk_+(\cS)$ is a join of spheres
\[
	Lk_+(\cS)=\ast S^{k_i-2}
\]
where $k_i=|X_i|$. The union of factorization cubes for the morphisms $\cS\to\Omega$ is a product of disks of dimension $k_i-1$. The key point is to show that these cells meet lower dimensional cells only along their boundaries. However, this is easy. The interior of the cell $e(\cS)$ is the set of all points $x\in B\cN\cP(n)$ with the property that $\varphi(x)$ is a morphism with source $\cS$. The boundary of the cell consists of unions of those faces of factorization cubes of morphisms $\cS\to\Omega$ which do not include $\cS$ as vertex. These are morphisms having source $\cR$ of smaller rank than $\cS$ which occur in factorizations $\cS\to\cR\to\Omega$. Therefore, $B\cN\cP(n)$ is a CW complex as claimed.
\end{proof}

\section{Relation to cluster categories}

The purpose of this section is to explain the relationship between the category of noncrossing partitions as explained in detail in this paper and the category of \cite{HK} which we denote $\cH\cK$. The basic topological difference is that the category $\cH\cK$ has an initial object $0$ and is therefore contractible. The same for the opposite category $\cH\cK^{op}$ where $0$ is the terminal object. The category $\cN\cP(n)$ has a Catalan number $C_n=\frac1{n+1}\binom{2n}n$ of morphisms from $\{1,\cdots,n\}$ to the object $\Omega$ which corresponds to $0$. There does not appear to be a functor from one category to the other. However, there is a third category that is related to both. This is an extended version of the ``cluster morphism category'' from \cite{IT13}.

\subsection{Preliminaries on categories}

Suppose that $F:\cC\to \cD$ is a functor and let $X\in\cD$. Then the \emph{comma category} $F\!\downarrow\!X$ (\cite{MacLane98}) is defined to be the category of pairs $(Y,f)$ where $Y$ is an object in $\cC$ and $f:FY\to X$ is a morphism in $\cD$. A morphism $(Y,f)\to (Z,g)$ is a morphism $h:Y\to Z$ in $\cC$ so that $f=g\circ Fh:FY\to FZ\to X$. In the special case when $F$ is the identity functor $\cC\to \cC$, we use the notation $\cC\!\downarrow\!X$ for $id_\cC\!\downarrow\!X$. Dually, let $X\!\downarrow\! F$ denote the \emph{(left) comma category} with objects all pairs $(f,Y)$ where $Y\in\cC$ and $f:X\to FY$. A morphism $(f,Y)\to (g,Z)$ in $X\!\downarrow\! F$ is a morphism $h:Y\to Z$ in $\cC$ so that $g=Fh\circ f:X\to FY\to YZ$. The following is an easy exercise.

\begin{prop}\label{prop: FX is full subcategory of JFX}
Let $F:\cC\to\cD'$ be a functor from $\cC$ to a subcategory $\cD'$ of $\cD$ and let $J:\cD'\into \cD$ be the inclusion functor. Let $X\in\cD'$. Then $F\!\downarrow\!X$ is a full subcategory of $JF\!\downarrow\!X$. \qed
\end{prop}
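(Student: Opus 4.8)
The plan is to unwind the definitions of the two comma categories and exhibit a functor from $F\!\downarrow\!X$ into $JF\!\downarrow\!X$ which is injective on objects and full and faithful on morphisms; then confirm that its essential image is closed under the relevant condition, so that it is literally the inclusion of a full subcategory. First I would record what the objects are. An object of $F\!\downarrow\!X$ is a pair $(Y,f)$ with $Y\in\cC$ and $f\colon FY\to X$ a morphism \emph{in $\cD'$}. An object of $JF\!\downarrow\!X$ is a pair $(Y,f)$ with $Y\in\cC$ and $f\colon JFY\to X$ a morphism \emph{in $\cD$}; since $JFY=FY$ on the nose (as $J$ is an inclusion) and $X\in\cD'$, such an $f$ is just a morphism $FY\to X$ in $\cD$. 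So the object assignment $(Y,f)\mapsto(Y,f)$ is well-defined and injective, the only content being that every morphism of $\cD'$ is a morphism of $\cD$.

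Next I would check the morphism sets. A morphism $(Y,f)\to(Z,g)$ in either comma category is a morphism $h\colon Y\to Z$ in $\cC$ satisfying the single equation $f=g\circ Fh$, and this equation is an equation between morphisms $FY\to X$; whether it is read in $\cD'$ or in $\cD$ makes no difference because $J$ is faithful (indeed an inclusion), so $g\circ Fh$ computed in $\cD'$ maps to $g\circ Fh$ computed in $\cD$. Hence for objects coming from $F\!\downarrow\!X$ the two Hom-sets in $F\!\downarrow\!X$ and in $JF\!\downarrow\!X$ coincide, which is exactly fullness and faithfulness of the inclusion functor. Composition and identities are defined by composition and identities in $\cC$ in both cases, so the inclusion is a functor.

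Finally, to say ``full subcategory'' rather than merely ``full and faithful functor'' I would observe that the object map is not just injective but an isomorphism onto the subclass of those $(Y,f)\in JF\!\downarrow\!X$ for which $f\colon FY\to X$ happens to lie in $\cD'$; since $\cC$ is sent by $F$ into $\cD'$, every object of $JF\!\downarrow\!X$ of the form $(Y,f)$ with $f$ a $\cD'$-morphism is in the image, and the Hom-sets agree by the previous paragraph. Thus $F\!\downarrow\!X$ is identified with a full subcategory of $JF\!\downarrow\!X$. There is essentially no obstacle here: the entire proof is bookkeeping with the definitions, and the only point worth stating explicitly is that an inclusion functor $J\colon\cD'\hookrightarrow\cD$ neither creates nor destroys morphisms or equations between them, so every structure in sight transports verbatim. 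This is why the excerpt simply calls it "an easy exercise."
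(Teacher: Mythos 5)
Your proof is correct and is exactly the routine definition-unwinding the paper intends when it calls the statement ``an easy exercise'' and gives no proof: objects of $F\!\downarrow\!X$ are the pairs $(Y,f)$ whose structure map $f:FY\to X$ lies in $\cD'$, and for two such objects the defining equation $f=g\circ Fh$ reads the same in $\cD'$ as in $\cD$ because $\cD'$ is a subcategory with the induced composition, so the Hom-sets agree and the inclusion is full. No gaps; the only point worth stating, which you do state, is that the inclusion $J$ neither adds nor removes morphisms or equations between morphisms of $\cD'$.
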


Given two equivalent Krull-Schmidt categories $\cC,\cD$ we define an \emph{functorial bijection} $f:\Ind\,\cC\to \Ind\,\cD$ to be a bijection between the sets of isomorphism classes of indecomposable objects of $\cC,\cD$ which comes from an equivalence of categories $\cC\cong \cD$. Thus, for example, if $\cC,\cD$ have only finitely many indecomposable objects, then there are at most finitely many functorial bijections $\Ind\,\cC\to \Ind\,\cD$ although there may be infinitely many equivalences.

\subsection{Cluster morphism category}

Let $\cA$, $\cB$ be hereditary abelian categories which are equivalent to module categories of finite dimensional algebras over the same field, say ${K}$. Let $\cC_\cA,\cC_\cB$ be the corresponding cluster categories. (See \cite{BMRRT} for details.) A \emph{cluster morphism} $\cC_\cA\rightharpoonup \cC_\cB$ is defined to be a pair $([T],f)$ where 
\begin{enumerate}
\item $[T]$ is (the isomorphism class of) a partial cluster tilting object $T\in \cC_\cA$ and 
\item $f:\Ind(T^\perp)\to \Ind\,\cB$ is a functorial bijection. 
\end{enumerate}
Recall that $T^\perp$ is the full subcategory of the category $\cA$ of all modules $M$ so that $\Hom(|T|,M)=0=\Ext(|T|,M)$ where $|T|$ is the underlying module of $T$ (replacing shifted projective summands $P[1]$ with $|P[1]|=P$). Composition of morphisms uses the bijection between ordered clusters and signed exceptional sequences. See \cite{IT13}. As a special case, $\Hom(\cC_\cA,0)$ is the set of isomorphism classes of cluster-tilting objects of $\cC_\cA$. 

Let $\cG_{K}$ denote the category whose objects are hereditary abelian ${K}$-categories as discussed above and whose morphisms $\cA\to\cB$ are defined to be the cluster morphisms $\cC_\cA\rightharpoonup\cC_\cB$ as defined above.

In \cite{IT13} we consider the following related category. For a fixed object $\cA\in\cG_{K}$, let $\cG(\cA)$ denote the category of all finitely generated wide subcategories $\cW\subseteq\cA$. These are exactly the categories which occur as perpendicular categories of partial cluster tilting objects $T$ in $\cC_\cA$. A morphism $\cW_1\to \cW_2$ in $\cG(\cA)$ is defined to be an isomorphism class $[S]$ of a partial cluster tilting object $S$ in $\cW_1$ so that $S^\perp\cap \cW_1=\cW_2$. There is a functor $J:\cG(\cA)\to \cG_{K}$ given by the identity on objects $J\cW=\cW$ and on a morphism $[S]:\cW_1\to\cW_2$ we take $J[S]$ to be the pair $([S],f)$ where $f:\Ind(S^\perp\cap \cW_1)\to \Ind \cW_2$ is the identity mapping.

\begin{prop}
Let $\overline\cG_{K}$ denote the quotient category of $\cG_{K}$ with the same objects but with equivalence classes of morphisms where $([T],f)\sim([S],g)$ if $T^\perp=S^\perp$ and $f=g$. Let $F:\cG_{K}\to\overline\cG_{K}$ be the forgetful functor. Then $\cG(\cA)$ is equivalent to the comma category $\cA\!\downarrow\! F$.
\end{prop}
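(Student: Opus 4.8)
\emph{Proof strategy.} The plan is to exhibit a fully faithful, essentially surjective functor $\Phi\colon\cG(\cA)\to\cA\!\downarrow\! F$ refining the functor $J$ of the preceding paragraph, so that the desired equivalence $\cG(\cA)\simeq\cA\!\downarrow\! F$ follows. The first step is pure bookkeeping: unwind what a morphism of $\overline\cG_\kk$ is. By construction of the quotient, a morphism $\cA\to F\cB=\cB$ in $\overline\cG_\kk$ is precisely a pair $(\cW,g)$ where $\cW=T^\perp$ is the perpendicular category of some partial cluster tilting object $T$ of $\cC_\cA$ --- equivalently, by the characterization recalled above, an arbitrary finitely generated wide subcategory of $\cA$ --- and $g\colon\Ind\cW\to\Ind\cB$ is a functorial bijection; composition is the one inherited from $\cG_\kk$. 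In particular each object $\cW$ of $\cG(\cA)$ determines a canonical ``projection'' $\phi_\cW\colon\cA\to F\cW$ in $\overline\cG_\kk$, namely the class of $([T_\cW],\mathrm{id}_{\Ind\cW})$ for any $T_\cW$ with $T_\cW^\perp=\cW$; this is independent of the choice of $T_\cW$ since $\overline\cG_\kk$ remembers only $T_\cW^\perp$ and the bijection.

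Next I would set $\Phi(\cW)=(\phi_\cW,\cW)$ on objects and $\Phi[S]=J[S]$ on morphisms, and check well-definedness: a morphism $[S]\colon\cW_1\to\cW_2$ of $\cG(\cA)$ must land in a morphism $(\phi_{\cW_1},\cW_1)\to(\phi_{\cW_2},\cW_2)$ of the comma category, i.e. $F(J[S])\circ\phi_{\cW_1}=\phi_{\cW_2}$ in $\overline\cG_\kk$. As $F$ is a functor, this reduces to identifying the composite
\[
\cA\xrightarrow{([T_{\cW_1}],\,\mathrm{id}_{\Ind\cW_1})}\cW_1\xrightarrow{([S],\,\mathrm{id})}\cW_2
\]
in $\cG_\kk$: lifting the partial cluster tilting object $S$ of $\cW_1=T_{\cW_1}^\perp$ back into $\cC_\cA$ and adjoining it to $T_{\cW_1}$ produces a partial cluster tilting object $\widetilde T$ of $\cC_\cA$ with $\widetilde T^\perp=S^\perp\cap\cW_1=\cW_2$ and composite bijection $\mathrm{id}_{\Ind\cW_2}$, so the composite has class $\phi_{\cW_2}$. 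Granting this, functoriality of $\Phi$ is inherited from $J$ and $F$, and faithfulness is automatic, since a comma morphism is just its underlying $\cG_\kk$-morphism and $J$ is faithful.

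Essential surjectivity and fullness then fall out of the same composition computation. For an object $(f,\cB)$ of $\cA\!\downarrow\! F$ with $f$ the class of $([T],g)$, put $\cW:=T^\perp$; the functorial bijection $g$ underlies an equivalence $\cW\simeq\cB$, so $([0],g)\colon\cW\to\cB$ (with $0$ the empty partial cluster tilting object, $0^\perp=\cW$) is an isomorphism of $\cG_\kk$, and $F([0],g)\circ\phi_\cW$ has class $([0],g)\circ([T],\mathrm{id}_{\Ind\cW})=([T],g)=f$, giving an isomorphism $\Phi(\cW)\xrightarrow{\ \sim\ }(f,\cB)$. For fullness, a comma morphism $h\colon(\phi_{\cW_1},\cW_1)\to(\phi_{\cW_2},\cW_2)$ written $h=([S],g)$ satisfies $Fh\circ\phi_{\cW_1}=\phi_{\cW_2}$; by the composite computation the left side has class $[([\widetilde T],g)]$ with $\widetilde T^\perp=S^\perp\cap\cW_1$, so comparison with $\phi_{\cW_2}=[([T_{\cW_2}],\mathrm{id}_{\Ind\cW_2})]$ forces simultaneously $S^\perp\cap\cW_1=\cW_2$ and $g=\mathrm{id}_{\Ind\cW_2}$. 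Hence $[S]$ is a morphism of $\cG(\cA)$ with $\Phi[S]=J[S]=h$, and $\Phi$ is an equivalence.

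\emph{Main obstacle.} The genuine content is the description of composition in $\cG_\kk$ invoked twice above: that composing the projection $\cA\to\cW_1$ cut out by $T_{\cW_1}$ with a morphism $([S],g)$ of $\cG_\kk$ reproduces in $\overline\cG_\kk$ exactly the morphism labelled by the pair $(S^\perp\cap\cW_1,\ g)$. This rests on the correspondence between ordered (partial) clusters and signed exceptional sequences, together with the compatibility of perpendicular categories with that correspondence, both from \cite{IT13}; once it is in hand, everything else is the routine comma-category bookkeeping indicated above, of a piece with Proposition \ref{prop: FX is full subcategory of JFX}.
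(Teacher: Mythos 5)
Your proposal is correct and matches the paper's argument in substance: your functor $\Phi:\cW\mapsto(\phi_\cW,\cW)$ is exactly the paper's quasi-inverse $\cW\mapsto(\mathrm{id},\cW)$, and the key identification of a comma morphism $([S],g)$ with a $\cG(\cA)$-morphism via $\cW_2=S^\perp\cap\cW_1$ (forcing $g=\mathrm{id}$, resting on the composition law of \cite{IT13}) is the same computation the paper records as ``$\cW_1=\cW_0\cap S^\perp$ where $S=f^{-1}(T)$.'' The only difference is organizational: the paper writes down both mutually inverse functors explicitly, while you verify that one of them is fully faithful and essentially surjective.
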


\begin{proof}
An object of $\cA\!\downarrow\! F$ is a pair $(f,\cB)$ where $f:\Ind \cW_0\cong \Ind\cB$ is a functorial bijection where $\cW_0$ is a finitely generated wide subcategory of $\cA$. If $(g,\cC)$ is another object of $\cA\!\downarrow\! F$ we have $g:\Ind\cW_1\cong \Ind\cC$. A morphism $(f,\cB)\to (g,\cC)$ is a cluster morphism $([T],h):\cB\to \cC$ so that the following diagram commutes.
\[
\xymatrix{
\Ind\cW_0\ar[r]^{f}& \Ind\cB \\
\Ind\cW_1\ar[u]^\subseteq \ar[r]\ar@/_2pc/[rr]^g & \Ind(T^\perp\cap\cB)\ar[u]^\subseteq \ar[r]^(.6)h & \Ind\cC
	}
\]
In other words, $\cW_1=\cW_0\cap S^\perp$ where $S=f^{-1}(T)$. An equivalence of categories $\cA\!\downarrow\! F\cong \cG(\cA)$ is given by sending $(f,\cB)$ to $\cW_0$ and $([T],h):(f,\cB)\to (g,\cC)$ to $[f^{-1}(T)]:\cW_0\to\cW_1$. The inverse $\cG(\cA)\cong\cA\!\downarrow\! F$ is given by taking $\cW$ to $(id,\cW)$.
\end{proof}

{\color{blue}Since $\cN\cP(n)$ is equivalent to the cluster morphism category of the path algebra of $A_{n-1}$ with straight orientation we conclude the following (for any field ${K}$).}

\begin{cor}\label{cor: NCP as cluster morphism category}
The category $\cN\cP(n+1)$ is equivalent to the comma category ${K} A_n\!\downarrow\! F$ where $F:\cG_{K}\to \overline\cG_{K}$ is the forgetful functor and $A_n$ denotes the quiver $1\ot 2\ot \cdots \ot n$. \qed
\end{cor}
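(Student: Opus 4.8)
The plan is to factor the claimed equivalence through the cluster morphism category $\cG(\kk A_n)$ of the previous subsection. Applying the preceding Proposition with $\cA=\kk A_n$ yields an equivalence $\cG(\kk A_n)\simeq\kk A_n\!\downarrow\!F$, so it will be enough to produce an equivalence
\[
	\cN\cP(n+1)\ \simeq\ \cG(\kk A_n)
\]
and to compose the two. All of the content is in this last equivalence: it is the translation, in the special case of the linearly oriented quiver $A_n$, of the combinatorics of Sections 1 and 2 into the representation theory of \cite{IT13}. Since the present paper was written precisely to make this case self-contained, the cleanest writeup would cite \cite{IT13} for the general correspondence and only check the features special to type $A$ with straight orientation.

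On objects, I would argue as follows. For $A_n$ with straight orientation the indecomposable modules are the interval modules, the Weyl group is $S_{n+1}$, and the generalized noncrossing partition lattice of type $A_n$ is the classical lattice of noncrossing partitions of an $(n+1)$-element set. Hence the Ingalls--Thomas correspondence \cite{Ingalls-Thomas} puts the finitely generated wide subcategories $\cW\subseteq\operatorname{mod}\kk A_n$ in bijection with the objects of $\cN\cP(n+1)$, with $\operatorname{mod}\kk A_n$ corresponding to the one-part partition of $\{1,\cdots,n+1\}$ and the zero subcategory to the discrete partition $\Omega$; under this bijection the inclusion $\cW_2\subseteq\cW_1$ corresponds to a refinement $\cS$ of $\cR$.

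On morphisms, recall that a morphism $\cW_1\to\cW_2$ in $\cG(\kk A_n)$ is the isomorphism class $[S]$ of a partial cluster tilting object $S$ in $\cW_1$ with $S^\perp\cap\cW_1=\cW_2$, together with the ordering of $S$ coming from a signed exceptional sequence as in \cite{IT13}. I would check that, under the object bijection, the c-vectors of the ordered summands of $S$ are edge vectors $\pm(X-Y)\in\ZZ\cR$, that the collection of these c-vectors is exactly a binary forest $T$ on $\cR$ relative to $\cS$ (the absence of self-extensions in the cluster category translating into pairwise compatibility of edges in the sense of Section 2), and that Theorem \ref{thm1: morphisms are maximal compatible sets of edge vectors} identifies the partial clusters with the binary forests. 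This produces a functor $\cN\cP(n+1)\to\cG(\kk A_n)$ which is essentially surjective and fully faithful by these same two bijections.

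The main obstacle is to verify that composition of binary forests, defined in Section 1 via the bijection $\sigma_T$, agrees with composition in $\cG(\kk A_n)$, which is defined through the bijection between ordered clusters and signed exceptional sequences of \cite{IT13}; that is, one must match $\sigma_T$ (and the associativity computation following Theorem \ref{thm2: bijection ET to E}) with the mutation-and-reordering bookkeeping for signed exceptional sequences. Here the cubical structure of $\cN\cP(n+1)$ helps: every morphism is determined by its rank-one factors, so it suffices to check that the two composition laws agree when the morphisms being composed have rank one, and in that case $\sigma_T$ is just the explicit c-vector mutation formula, which matches the prescription of \cite{IT13} by inspection. Composing $\cN\cP(n+1)\simeq\cG(\kk A_n)$ with $\cG(\kk A_n)\simeq\kk A_n\!\downarrow\!F$ then gives the corollary.
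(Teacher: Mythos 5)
Your proposal follows the paper's own route: the corollary is obtained there by combining the preceding Proposition ($\cG(\cA)\simeq\cA\!\downarrow\!F$, applied with $\cA=\kk A_n$) with the assertion, credited to \cite{IT13}, that $\cN\cP(n+1)$ is equivalent to the cluster morphism category $\cG(\kk A_n)$ of the linearly oriented quiver, which is exactly your factorization. The additional detail you sketch for the equivalence $\cN\cP(n+1)\simeq\cG(\kk A_n)$ (object bijection via wide subcategories, c-vectors as edge vectors, matching of the composition laws) goes beyond what the paper records, since it simply cites \cite{IT13} and states the corollary with no further proof.
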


\subsection{The category of Hubery-Krause}

We recall that the objects of the Hubery-Krause category $\cH\cK$ are pairs $(L,E)$ where $L\cong \ZZ^n$ (for $n$ variable) with an Euler form $\left<\cdot,\cdot\right>$ coming from some finite dimensional hereditary algebra $\Lambda$ over a field ${K}$ and $E\subset L$ is the set of dimension vectors of a complete \emph{exceptional sequence up to sign}. (Exceptional sequences are defined in \cite{CB}. All $2^n$ possible signs for an exceptional sequence are allowed in \cite{HK}. But in \cite{IT13} not all signs are allowed. E.g., there are only $n!C_{n+1}$ \emph{signed exceptional sequences} of type $A_n$ corresponding to the $n!$ permutations of the $C_{n+1}$ clusters.) A morphism $(L',E')\to (L,E)$ is a linear embedding $\varphi:L'\into L$ which is an isometry with respect to the Euler forms and which sends $E'$ to the set of dimension vectors of an exceptional sequence up to sign. The representation theoretic language can be removed from the definition since the dimension vectors of exceptional sequences in $L$ depends only on $E$ and the Euler form.

There is a (contravariant) functor $\cG_{K}\to \cH\cK^{op}$ given by sending $\cA$ to the pair $(K_0(\cA),E)$ where $E=\{[S_i]\}$ is the basis of $K_0(\cA)$ given by the simple modules. (See \cite{HK}, Lemma 5.3.) A morphism $([T],f):\cA\to \cB$ is sent to the monomorphism $f^\ast:K_0(\cB)\to K_0(\cA)$ induced by $f$. We denote this functor by $K_0$. Thus $K_0(\cA)=(K_0(\cA),E)$ with $E$ understood. Since $f$ is uniquely determined by $f^\ast$, we get the following.

\begin{lem}
The functor $K_0:\cG_{K}\to \cH\cK^{op}$ factors through the forgetful functor $F:\cG_{K}\to\overline\cG_{K}$ and the induced functor $\overline\cG_{K}\to \cH\cK^{op}$ is faithful (but not full).\qed
\end{lem}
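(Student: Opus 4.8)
The three assertions separate cleanly. That $K_0$ descends along $F$ is purely formal: by construction of $\overline\cG_\kk$, two parallel morphisms $([T],f),([S],g)\colon\cA\to\cB$ of $\cG_\kk$ are identified by $F$ exactly when $T^\perp=S^\perp$ and $f=g$, and then the maps they induce on Grothendieck groups coincide, $f^\ast=g^\ast\colon K_0\cB\to K_0\cA$. Hence there is a unique functor $\overline{K_0}\colon\overline\cG_\kk\to\cH\cK^{op}$ with $K_0=\overline{K_0}\circ F$, equal to $K_0$ on objects and sending the morphism class of $([T],f)$ to $f^\ast$. That $K_0$ (equivalently $\overline{K_0}$) is a functor at all --- compatibility with composition of cluster morphisms, which uses the correspondence between ordered clusters and signed exceptional sequences --- I would take as already established, citing \cite{HK} and \cite{IT13}.

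For faithfulness, suppose $([T],f),([S],g)\colon\cA\to\cB$ satisfy $f^\ast=g^\ast$. Recall that $f$ is induced by an exact equivalence $T^\perp\xrightarrow{\ \sim\ }\cB$, that $T^\perp$ is a finitely generated (``exceptional'') wide subcategory of the hereditary category $\cA$, and that its simples have linearly independent classes in $K_0\cA$ --- they extend to an exceptional basis --- so $K_0(T^\perp)\hookrightarrow K_0\cA$ is injective (\cite{CB}, \cite{Ingalls-Thomas}). Thus $f^\ast$ is the isomorphism $K_0\cB\cong K_0(T^\perp)$ induced by $f$, followed by this inclusion; likewise for $g^\ast$. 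Comparing images gives $K_0(T^\perp)=K_0(S^\perp)$ as subgroups of $K_0\cA$, and reconstructing an exceptional wide subcategory from the subgroup of $K_0$ it generates --- which is precisely the mechanism behind the noncrossing-partition description of \cite{HK} --- forces $T^\perp=S^\perp=:\cW$. Cancelling the injection $K_0\cW\hookrightarrow K_0\cA$ from $f^\ast=g^\ast$ then shows that $f$ and $g$ induce the same isomorphism $K_0\cW\cong K_0\cB$; since, as noted just before the lemma, a functorial bijection is determined by its effect on Grothendieck groups, $f=g$. Hence $([T],f)\sim([S],g)$, so they already agree in $\overline\cG_\kk$. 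I expect this reconstruction step --- recovering $\cW$ from $K_0\cW\subseteq K_0\cA$ --- to be the one genuine obstacle; it is a standard feature of the perpendicular-category calculus for hereditary algebras, which I would cite rather than reprove, everything else being bookkeeping.

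For the failure of fullness it suffices to exhibit a single object with too few endomorphisms. Take $\cA=\mathrm{mod}\,\kk$, the semisimple category with one simple. In the cluster category $\cC_\cA$ of type $A_1$ every nonzero partial cluster tilting object is cluster-tilting and has zero perpendicular category, so every cluster morphism out of $\cA$ with nonzero partial cluster tilting object has target the zero category; consequently the only endomorphism of $\cA$ in $\cG_\kk$, and hence in $\overline\cG_\kk$, is the identity. On the other hand $K_0\cA\cong\ZZ$ with Euler form $\langle x,y\rangle=xy$ and $E=\{e\}$ the class of the simple, and $-\mathrm{id}\colon\ZZ\to\ZZ$ is an isometry sending $E$ to $\{-e\}$, which is still the dimension-vector set of a complete exceptional sequence up to sign; so $-\mathrm{id}$ is a morphism $(K_0\cA,E)\to(K_0\cA,E)$ in $\cH\cK$ that is not in the image of $\overline{K_0}$. (Conceptually: cluster morphisms realize only the restricted sign patterns of \emph{signed} exceptional sequences, whereas $\cH\cK$ admits all $2^{n}$ signs.)
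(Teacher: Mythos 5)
Your write-up is correct in substance and, for the most part, follows the same line as the paper, whose own justification is extremely terse: the factorization through $F$ is formal, faithfulness rests on the remark made just before the statement that $f$ is determined by $f^\ast$, and non-fullness is witnessed by multiplication by $-1$ --- exactly the example you give, instantiated at $A_1$, where your count of endomorphisms of $\mathrm{mod}\,\kk$ in $\cG_\kk$ is right and the check that $-\mathrm{id}$ is an isometry respecting the exceptional data is right.

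The one place where you diverge, and where I would push back, is the faithfulness detour through ``the image subgroup of $K_0\cA$ determines the wide subcategory.'' You correctly flag this as the genuine obstacle, but it is both the shakiest citation (in the generality of $\cG_\kk$, which includes wild hereditary algebras, this is a Brady--Watt-type injectivity statement, not an off-the-shelf fact) and unnecessary: $f^\ast$ remembers strictly more than its image. Since $f$ comes from an equivalence $T^\perp\simeq\cB$, the map $f^\ast$ sends each element of the distinguished basis $E_\cB$ (the classes of the simples of $\cB$) to the class in $K_0\cA$ of the corresponding simple object of $T^\perp$. These simples are exceptional objects of the hereditary category $\cA$, and exceptional objects are determined up to isomorphism by their classes; hence $f^\ast=g^\ast$ already forces the simples of $T^\perp$ and $S^\perp$ to coincide object by object, so $T^\perp=S^\perp$ (each being the extension closure of its simples), with no appeal to reconstructing a wide subcategory from a sublattice. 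After that, your final step --- that the functorial bijection is determined by its effect on Grothendieck groups --- is precisely the paper's own (unproved) assertion ``$f$ is uniquely determined by $f^\ast$,'' so deferring to it puts you on the same footing as the paper. With the basis-image argument substituted for the subgroup argument, your proof is a sound and more detailed version of what the paper leaves to the reader.
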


An example of a morphism in $\cH\cK^{op}$ which is not in $\overline\cG_{K}$ is multiplication by $-1$ (corresponding to the shift $[1]$ in the derived category). By Proposition \ref{prop: FX is full subcategory of JFX} we get the following.

\begin{thm}
Let $\cA$ be an object of $\cG_{K}$. Then the opposite category of $\cG(\cA)$ is equivalent to a full subcategory of $K_0\!\downarrow\!K_0(\cA)$.\qed
\end{thm}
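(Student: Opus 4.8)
The plan is to obtain this formally from the two preceding results together with Proposition~\ref{prop: FX is full subcategory of JFX}. Recall that $\cG(\cA)$ is equivalent to the (left) comma category $\cA\!\downarrow\! F$ for the forgetful functor $F\colon\cG_\kk\to\overline\cG_\kk$, and that by the Lemma $K_0$ factors as $K_0=\overline{K_0}\circ F$ with $\overline{K_0}\colon\overline\cG_\kk\to\cH\cK^{op}$ faithful.

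First I would dualize. Taking opposite categories turns a left comma category into the right comma category of the opposite functor, so $\cG(\cA)^{op}\simeq(\cA\!\downarrow\! F)^{op}\cong F^{op}\!\downarrow\!\cA$, a right comma category over $\cA$ for $F^{op}\colon\cG_\kk^{op}\to\overline\cG_\kk^{op}$. This step is what reconciles the two conventions: $\cG(\cA)\simeq\cA\!\downarrow\! F$ records morphisms \emph{out of} $\cA$, while $K_0\!\downarrow\!K_0(\cA)$ records morphisms \emph{into} $K_0(\cA)$, and $K_0$ reverses arrows.

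Next I would apply Proposition~\ref{prop: FX is full subcategory of JFX} to the functor $F^{op}\colon\cG_\kk^{op}\to\overline\cG_\kk^{op}$ together with the faithful functor $\overline{K_0}^{op}\colon\overline\cG_\kk^{op}\to\cH\cK$ (the latter playing the role of the subcategory inclusion $J$). Since $\overline{K_0}^{op}\circ F^{op}=(\overline{K_0}\circ F)^{op}=K_0^{op}$ and $\overline{K_0}^{op}$ carries $\cA$ to $K_0(\cA)$, the proposition gives that $F^{op}\!\downarrow\!\cA$ is a full subcategory of $K_0^{op}\!\downarrow\!K_0(\cA)$; and $K_0^{op}\!\downarrow\!K_0(\cA)=(K_0(\cA)\!\downarrow\!K_0)^{op}$ is precisely the comma category written $K_0\!\downarrow\!K_0(\cA)$ once $K_0$ is read as the contravariant functor $\cG_\kk\to\cH\cK$. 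Composing with $\cG(\cA)^{op}\simeq F^{op}\!\downarrow\!\cA$ then completes the proof.

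The one point that needs genuine care is that Proposition~\ref{prop: FX is full subcategory of JFX} is stated for an honest subcategory inclusion $J$, whereas here $\overline{K_0}$ (hence $\overline{K_0}^{op}$) is only faithful and need not be injective on objects. I would handle this either by observing that the proof of that proposition uses nothing about $J$ beyond faithfulness, or by first replacing $\overline\cG_\kk^{op}$ by its image under $\overline{K_0}^{op}$, which is harmless since we only want an equivalence with a \emph{full} subcategory. The remaining ingredients — the identity $(\cA\!\downarrow\! F)^{op}\cong F^{op}\!\downarrow\!\cA$ and the matching up of opposites and comma-category conventions — are purely formal.
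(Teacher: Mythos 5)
Your proposal is correct and is essentially the paper's own (unwritten) argument: the theorem is stated with a \qed precisely because it is meant to follow formally from $\cG(\cA)\simeq\cA\!\downarrow\!F$, the factorization $K_0=\overline{K_0}\circ F$ with $\overline{K_0}$ faithful, and Proposition~\ref{prop: FX is full subcategory of JFX}, which is exactly the assembly you carry out. Your added care about the op-dualization of comma categories and about $\overline{K_0}$ being merely faithful rather than a literal subcategory inclusion (faithfulness is all the proposition's proof uses) just makes explicit what the paper leaves implicit.
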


By Corollary \ref{cor: NCP as cluster morphism category}, we get the following special case.

\begin{cor}
The opposite category of $\cN\cP(n+1)$ is equivalent to a full subcategory of $K_0\!\downarrow\!K_0({K} A_n)$.\qed
\end{cor}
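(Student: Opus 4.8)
The plan is to obtain the statement as an immediate specialization of the Theorem above, so that the only thing to do is to identify $\cN\cP(n+1)$ with $\cG(\kk A_n)$ and then pass to opposite categories. First I would invoke the Proposition above, which identifies $\cG(\cA)$ with the comma category $\cA\!\downarrow\! F$ for every object $\cA\in\cG_\kk$. Specializing to $\cA=\kk A_n$ and combining this with Corollary \ref{cor: NCP as cluster morphism category}, which asserts $\cN\cP(n+1)\cong \kk A_n\!\downarrow\! F$, produces equivalences
\[
	\cN\cP(n+1)\;\cong\;\kk A_n\!\downarrow\! F\;\cong\;\cG(\kk A_n).
\]

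Next I would pass to opposite categories. An equivalence of categories induces an equivalence of the opposite categories, so $\cN\cP(n+1)\op\cong\cG(\kk A_n)\op$. The Theorem above, applied with $\cA=\kk A_n$, says that $\cG(\kk A_n)\op$ is equivalent to a full subcategory of $K_0\!\downarrow\!K_0(\kk A_n)$. Composing this with the equivalence of the previous sentence, and using transitivity of the relation ``is equivalent to a full subcategory of'', yields the corollary.

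I do not expect a genuine obstacle: once Corollary \ref{cor: NCP as cluster morphism category} and the Theorem above are in hand, the deduction is purely formal. The only points that deserve a word of care are the index shift between the parameter $n+1$ of $\cN\cP$ and the quiver $A_n$ appearing in Corollary \ref{cor: NCP as cluster morphism category} (already absorbed into the statement of that corollary), and the elementary fact that fullness of a subcategory is preserved under equivalence of categories, which is what legitimizes the final composition of equivalences.
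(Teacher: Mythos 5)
Your argument is correct and is essentially the paper's own: the corollary is stated there as an immediate special case, obtained exactly as you do by combining $\cN\cP(n+1)\cong \kk A_n\!\downarrow\!F\cong \cG(\kk A_n)$ (the Proposition plus Corollary \ref{cor: NCP as cluster morphism category}) with the Theorem applied to $\cA=\kk A_n$ and passing to opposite categories. No gaps; the formal points you flag (the index shift and composing equivalences with a full embedding) are handled just as in the paper.
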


\begin{eg}
Let $\cA_2$ be the module category of ${K} A_2$ the path algebra of the quiver $1\ot 2$. Then $\cA_2$ has five wide subcategories: $0,\cA(S_1),\cA(S_2),\cA(P_2),\cA_2$. These correspond to the five noncrossing partitions of the set $\{0,1,2\}$: $(0)(1)(2), (01)(2), (0)(12), (1)(02), (012)$, resp. Up to isomorphism, these are the five objects of $\cN\cP(3)$ and there are 11 morphisms of rank one and 5 morphisms of rank two between these arranged as follows.
\[
\xymatrix{
& (0)(12)\ar[dr]^2\\
(012) \ar[r]^2\ar[ur]^2\ar[dr]^1& (01)(2)\ar[r]^2 & (0)(1)(2)\\
	& (1)(02) \ar[ur]^2
	}
\]
The labels on the arrows indicate the number of rank one morphisms. There are five rank 2 morphisms which are given in detail in subsection \ref{ss: example Sijk} with $i,j,k=0,1,2$ and labeled:
\[
	(a),(b),(c),(d),(e):\cS_{ijk}\to \Omega.
\]

Each rank 2 morphism has $2!=2$ possible factorizations given by the possible orderings of the set of edges. These give the $2!5=10$ signed exceptional sequences. There are 12 exceptional sequences up to sign. {\color{blue}They are $(\pm \beta_{01},\pm\beta_{02})$, $(\pm\beta_{02},\pm\beta_{12})$ and $(\pm\beta_{12},\pm\beta_{01})$.
}The two extraneous ones are: $(\pm\beta_{02},-\beta_{12})$. These are not signed exceptional sequences since $\beta_{12}$ does not correspond to a projective module. (The only negative objects in the cluster category are the shifted projective objects.) {\color{blue}This was also seen in Figure \ref{Fig: factorization of morphism of rank 2}. The only morphism allowed to precede $\pm\beta_{02}$ is $+\beta_{12}$ since $W=\{0,2\}$ covers $1$, so the edge must go from $1$ up to $W$ which is $+\beta_{12}$.}

In the category $\cH\cK$, we have $K_0({K} A_2)=(\ZZ^2,(\alpha_2,\alpha_1))$ with Euler form given by the Euler matrix $\mat{1 & 0\\-1&1}$ and $\alpha_i=e_i$ are the basis vectors. There are six morphisms $K_0({K})=(\ZZ,\alpha_1)\to K_0({K} A_2)$ given by sending $\alpha_1$ to the six roots $\pm \alpha_1,\pm \alpha_2,\pm \beta$ where $\beta=\alpha_1+\alpha_2$. Since $K_0({K} A_2)$ has six automorphisms, the comma category $K_0\!\downarrow\!K_0({K} A_2)$ has 13 objects (up to isomorphism): 
\begin{enumerate}
\item one object $(0,0)$ corresponding to the unique morphism $(0,\emptyset)\into K_0({K} A_2)$,
\item six objects $({K} A_1,(\beta_i)_\ast)$ where $\beta_i$ runs through the six roots of the root system, $(\beta_i)_\ast:K_0({K} A_1)\to K_0({K} A_2)$ being the isometric embedding sending $\alpha_1$ to $\beta_i$, 
\item  six objects $({K} A_2,\phi^i)$, $i=0,\cdots,5$ where $\phi\in\Aut(K_0({K} A_2))=\ZZ/6$ is a generator.
\end{enumerate}
The embedding of $\cN\cP(3)$ as a full subcategory of $K_0\!\downarrow\!K_0({K} A_2)$ sends the five objects of $\cN\cP(3)$ to:
\[
\begin{tabular}{cccc}
NCP & wide subcategory of $\cA_2$ & object of $K_0\!\downarrow\!K_0({K} A_2)$ & $T$\\
\hline
(0)(1)(2) & 0 & (0,0) & $T=T_1\oplus T_2$ (5 cases)\\
(0)(12) & $\cA(S_2)$ & $({K} A_1,(\alpha_2)_\ast)$ &  $T=P_1,P_1[1]$\\
(01)(2) & $\cA(S_1)$ & $({K} A_1,(\alpha_1)_\ast)$ &  $T=P_2,P_2[1]$\\
(1)(02) & $\cA(P_2)$ & $({K} A_1,(\beta)_\ast)$ &  $T=S_2$\\
(012) & $\cA_2$ & $({K} A_2,\phi^0=id)$ &  $T=0$\\
\end{tabular}
\]
The last column lists the morphisms from $\cA_2$ to the object in terms of representation theory. For example, there is only one morphism $(012)\to (1)(02)$ given by the cluster $T=S_2$. Since $S_2$ is not projective, there is no $S_2[1]$. In terms of the noncrossing partition, $(02)$ covers $(1)$, so the ordering is determined.

The category $K_0\!\downarrow\!K_0({K} A_2)$ has $6\times 7=42$ rank one morphisms. Of these, 11 lie in $\cN\cP(3)\cong \cG(\cA_2)$. For each $i$ there are two morphisms $0\to ({K} A_1,\beta_i)$ (of these the six that lie in $\cG(\cA_2)$ are the ones corresponding to the three positive roots) and five morphisms $({K} A_1,\beta_\ast)\to({K} A_2,\phi^i)$. Only the five where $i=0, \phi^0=id$ occurs in $\cG(\cA_2)$.

The fundamental groups of $\cN\cP(3)$ and $K_0\!\downarrow\!K_0({K} A_2)$ are:  
\[
	\pi_1(\cN\cP(3))=\left<x_1,x_2,x_3\,:\, x_2=[x_1,x_3]\right>=F_2
\]where $[x,y]:=y^{-1}xyx^{-1}$ and 
\[
\pi_1(K_0\!\downarrow\!K_0({K} A_2))=\left<x_1,\cdots,x_6\,:\, x_i=[x_{i-1},x_{i+1}], i=1,\cdots,6\right>
\]
The embedding of $\cN\cP(3)$ into $K_0\!\downarrow\!K_0({K} A_2)$ sends $x_i$ to $x_i$ for $i=1,2,3$.
\end{eg}

The relation between the categories of noncrossing partitions given in this paper and the one in \cite{HK} is apparently very complicated even in the smallest examples. However, the following diagram summarizes the connection.
\[
	\cN\cP\into \cG_{K}\onto \overline\cG_{K}\into \cH\cK^{op}
\]

\section*{Acknowledgements}

The author thanks his coauthors Moses Kim, Kent Orr, Gordana Todorov and Jerzy Weyman of the other papers on which this paper is based. He also thanks Ira Gessel and Olivier Bernardi for explaining the ballot numbers to him. Danny Ruberman was very helpful for using computer software to show that the groups $G(A_3)$ for different orientations of the quiver $A_3$ are nonisomorphic. (We use straight orientation in this paper.) Ruth Charney and her students, especially Matt Cordes, gave very nice talks about $CAT(0)$ spaces which were very helpful to prepare this paper. Also the author thanks Henning Krause for explaining his joint paper \cite{HK} to him and for his help with the bibliography. The author was supported by National Security Agency Grant \#H98230-13-1-0247 to investigate topological methods in representation theory. Version 5 (this version) is supported by Simons Foundation Grant \#686616. The author also thanks the anonymous referee for extensive comments and suggestions.

\end{document}